\newcommand{\eps}{\varepsilon}
\newcommand{\red}{\textcolor{red}}
\newcommand \commentout[1] {}
\newcommand{\R}{\mathbb{R}}
\newcommand{\N}{\mathbb{N}}
\newcommand {\al} {\alpha}
\newcommand {\sg} {\sigma}
\newcommand {\vp} {\varphi}
\newcommand {\lb} {\lambda}
\newcommand {\Chi} {{\bf \raise 2pt \hbox{$\chi$}} }
\newcommand {\Div}  { {\rm div} }
\newcommand {\f}   {\frac}
\newcommand {\p}   {\partial}
\newcommand*{\dd}{\mathop{\kern0pt\mathrm{d}}\!{}}
\newcommand {\intM}   {\int_{0}^{M}}
\newcommand{\diff}{\mathop{}\!\mathrm{d}}
\DeclarePairedDelimiter{\norm}{\lVert}{\rVert}
\newtheorem{mainthm}{Theorem}
\theoremstyle{plain}
\newtheorem*{thm*}{Theorem}
\newtheorem{thm}{Theorem}[section]
\newtheorem{lemma}[thm]{Lemma}
\newtheorem{proposition}[thm]{Proposition}
\theoremstyle{remark}
\newtheorem{remark}[thm]{\bf Remark}
\newtheorem{definition}[thm]{\bf Definition}
\newcommand{\ie}{\emph{i.e.}\;}
\newcommand{\eg}{\emph{e.g.}\;}
\newcommand{\etal}{\emph{et al.}\;}
\newcommand{\beq}{\begin{equation}}
\newcommand{\eeq}{\end{equation}}
\newcommand{\bea} {\begin{array}{rl}}
\newcommand{\eea} {\end{array}}
\newcommand{\bepa}{\left\{ \begin{array}{l}}
\newcommand{\eepa} {\end{array}\right.}
\numberwithin{equation}{section}
\title{Pressure jump and radial stationary solutions of the degenerate Cahn-Hilliard equation}
\author{Charles Elbar\thanks{Sorbonne Universit\'{e}, CNRS, Universit\'{e} de Paris, Inria, Laboratoire Jacques-Louis Lions, F-75005 Paris, France } \thanks{email: charles.elbar@sorbonne-universite.fr}
\and Beno\^it Perthame\footnotemark[1]  \thanks{email: benoit.perthame@sorbonne-universite.fr}
\and Jakub Skrzeczkowski\thanks{Faculty of Mathematics, Informatics and Mechanics, University of Warsaw, Stefana
Banacha 2, 02-097 Warsaw, Poland} \thanks{email: jakub.skrzeczkowski@student.uw.edu.pl}
}
\begin{document}

\maketitle

\begin{abstract}
 The Cahn-Hilliard equation with degenerate mobility is used in several areas including the modeling of living tissues. We are interested in quantifying the pressure jump at the interface in the case of incompressible flows. To do so, we include an external force and consider stationary radial solutions. This allows us to compute the pressure jump in the small dispersion regime. We also characterize compactly supported stationary solutions in the  incompressible case, prove the incompressible limit and prove convergence of the parabolic problems to stationary states. 

\end{abstract}
\vskip .7cm

\noindent{\makebox[1in]\hrulefill}\newline
2010 \textit{Mathematics Subject Classification.}  35B40; 35B45; 35G20 ; 35Q92
\newline\textit{Keywords and phrases.} Degenerate Cahn-Hilliard equation;  Asymptotic Analysis; Incompressible limit; Hele-Shaw equations; Surface tension; Pressure jump.

\section{Introduction}

The degenerate Cahn-Hillard equation is now commonly used in tumor growth modeling and takes into account surface tensions at the interface between different types of cells, leading to a jump of pressure. In order to compute this jump, we propose to set the problem in a spherically symmetric domain with a boundary determined by the radius $R_b$, and to include an external force. Therefore we consider, in two dimensions for simplicity, the equation
%
\begin{align}
\f{\p (rn)}{\p t}-\f{\p}{\p r}\left(rn\f{\p(\mu+V)}{\p r}\right)=0,\quad \text{in}\quad &(0,+\infty)\times I_{R_{b}},\label{eq:CHR1}\\
\mu =n^{\gamma}-\f{\delta}{r}\f{\p}{\p r}\left(r\f{\p n}{\p r}\right),\quad \text{in}\quad &(0,+\infty)\times I_{R_{b}} \label{eq:CHR2},    
\end{align}
where $I_{R_{b}} = (0,R_{b})$ is the line segment of length $R_{b}$. Equations \eqref{eq:CHR1}--\eqref{eq:CHR2} are equipped with Neumann boundary conditions
\begin{equation}  \label{eq:Neumann}
\f{\p n}{\p r}\Big|_{r=0}=\f{\p n}{\p r}\Big|_{r=R_{b}}=n\f{\p(\mu+V)}{\p r}\Big|_{r=0}=n\f{\p(\mu+V)}{\p r}\Big|_{r=R_{b}}=0,
\end{equation}
and with an initial condition satisfying
\begin{equation}  \label{eq:IDR}
n_0\in H^{1}(I_{R_{b}}), \qquad n_{0}\ge 0.
\end{equation}

We only consider nonnegative solutions and thus the term $n^\gamma$ is well defined and the (normalized by a factor $\frac \pi 2$) total mass is
\begin{equation}\label{def:mass}
m := \int_0^{R_b} r\, n_{0}(r) \diff r =\int_0^{R_b} r\, n(t,r) \diff r.
\end{equation}
Finally, the confining potential $V(r)$ is of class $C^{1}$.
\\
 Our first result concerns the existence of solutions of \eqref{eq:CHR1}-\eqref{eq:CHR2}, their regularity, and asymptotic behaviour.
\begin{mainthm}[Existence of solutions and long term asymptotic]
\label{thm:finalthm1}
There exists a global weak solution of \eqref{eq:CHR1}-\eqref{eq:IDR} in the sense of Definition \ref{def:weak} and it satisfies estimates as in Remark~\ref{rem:more_prop_weak_sol}. Moreover, up to a subsequence, $\{r\, n(t+k,r)\}_{k}$ converges
locally in time uniformly in space to a stationary solution $ r \, n_{\infty}(r)\geq 0$ where $n_{\infty} \in C^1(\overline{I_{R_{b}}})$ satisfies $m= \int_0^{R_b} r\, n_{\infty}(r) \diff r$ and
\begin{equation}\label{eq:thm1_stationary}
rn_{\infty}\f{\p(\mu_{\infty}+V)}{\p r} =0, \qquad \quad \mu_{\infty} =n^{\gamma}_{\infty}-\f{\delta}{r}\f{\p}{\p r}\left(r\f{\p n_{\infty}}{\p r}\right)\qquad \quad n'_{\infty}(0) = n'_{\infty}(R_{b}) = 0.
\end{equation}
\end{mainthm}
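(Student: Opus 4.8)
The equation \eqref{eq:CHR1}--\eqref{eq:CHR2} is a degenerate weighted gradient flow, with mobility $rn$, for the free energy
\begin{equation}\label{eq:energy-plan}
E[n] = \int_0^{R_b}\left(\frac{n^{\gamma+1}}{\gamma+1} + \frac{\delta}{2}\,|\partial_r n|^2 + n\,V\right) r \, \diff r ,
\end{equation}
whose weighted first variation is exactly $\mu+V$, with $\mu$ given by \eqref{eq:CHR2}. Differentiating $E[n(t)]$ along the flow and integrating by parts, the Neumann conditions \eqref{eq:Neumann} kill the boundary terms and yield the dissipation identity
\begin{equation}\label{eq:dissipation-plan}
\frac{\diff}{\diff t}\, E[n(t)] = -\int_0^{R_b} rn\,\left|\partial_r(\mu+V)\right|^2 \diff r \le 0 .
\end{equation}
This identity is the backbone of everything that follows: it furnishes an $L^\infty_t H^1_r$-type bound on $n$ through the gradient term in \eqref{eq:energy-plan}, and, via the conserved mass \eqref{def:mass}, control of the ``velocity'' $\sqrt{rn}\,\partial_r(\mu+V)$ in $L^2_{t,r}$.

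For the existence part I would use the standard approximation scheme for degenerate Cahn--Hilliard equations: regularize the degenerate mobility $rn$ by $r\,m_\varepsilon(n)$ with $m_\varepsilon$ bounded away from $0$ (together with a lower-order regularization of $n^\gamma$ if needed), solve the nondegenerate problem by a Galerkin or implicit-in-time scheme, and pass to the limit $\varepsilon\to 0$ using the $\varepsilon$-uniform bounds coming from \eqref{eq:dissipation-plan} and \eqref{def:mass}. Writing the flux as $J = \sqrt{rn}\cdot\big(\sqrt{rn}\,\partial_r(\mu+V)\big)$ and using $\int_0^{R_b} rn\,\diff r = m$ gives $\|J\|_{L^1_r}\le \sqrt{m}\,\big(\int_0^{R_b} rn\,|\partial_r(\mu+V)|^2\,\diff r\big)^{1/2}$, hence $J\in L^2_t L^1_r$ and thus a bound on $\partial_t(rn)=\partial_r J$ in a negative Sobolev norm; together with the spatial $H^1$ bound and the compact embedding $H^1(I_{R_b})\hookrightarrow C(\overline{I_{R_b}})$, an Aubin--Lions argument gives strong convergence of $rn$ and lets one recover the weak formulation of Definition~\ref{def:weak}. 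I expect the genuinely delicate step to be the identification of the limit of the degenerate flux, since $\partial_r\mu$ carries third derivatives of $n$: one must pass to the limit weakly on $\sqrt{rn}\,\partial_r(\mu+V)$ and strongly on $\sqrt{rn}$ and then reassemble the product.

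The long-time asymptotics then follows from \eqref{eq:dissipation-plan} by a time-shift and compactness argument. Since $E$ is bounded below and, by \eqref{eq:dissipation-plan}, nonincreasing along the flow, $E[n(t)]$ converges as $t\to\infty$ and the total dissipation $\int_0^\infty\!\int_0^{R_b} rn\,|\partial_r(\mu+V)|^2\,\diff r\,\diff t$ is finite. For the shifts $n_k(t,r):=n(t+k,r)$ the dissipation over any window $[0,T]$ equals $\int_k^{k+T}\!\int_0^{R_b} rn\,|\partial_r(\mu+V)|^2\,\diff r\,\diff t$, which tends to $0$ as $k\to\infty$, while the energy bound and the flux bound provide the same $k$-uniform estimates as above. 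Aubin--Lions applied to $\{rn_k\}_k$ gives, along a subsequence, convergence of $rn_k$ locally uniformly in time and uniformly in space to a limit $rn_\infty$. Passing to the limit in the vanishing dissipation forces $rn_\infty\,\partial_r(\mu_\infty+V)=0$, whence $\partial_t(rn_\infty)=\partial_r\big(rn_\infty\,\partial_r(\mu_\infty+V)\big)=0$, so $n_\infty$ is indeed stationary and, by passing to the limit in \eqref{def:mass}, has mass $m$.

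It remains to upgrade $n_\infty$ to $C^1(\overline{I_{R_b}})$ and to read off the boundary conditions. On each connected component of $\{n_\infty>0\}$ the relation $rn_\infty\,\partial_r(\mu_\infty+V)=0$ gives $\mu_\infty+V\equiv \text{const}$, i.e. $\frac{\delta}{r}\partial_r(r\,\partial_r n_\infty) = n_\infty^\gamma + V - \text{const}$, a second-order ODE with continuous right-hand side; elliptic bootstrapping makes $n_\infty$ of class $C^2$ inside its support, and matching with the region $\{n_\infty=0\}$ (where $n_\infty$ and $\partial_r n_\infty$ must vanish at the free boundary) yields global $C^1$ regularity as well as $n_\infty'(0)=n_\infty'(R_b)=0$. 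Alongside the degenerate-flux limit of the existence step, I expect this regularity-and-matching at the free boundary, together with the careful treatment of the coordinate singularity at $r=0$ throughout the weighted estimates, to be the two main obstacles.
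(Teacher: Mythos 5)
Your outline correctly identifies the energy--dissipation structure, and the compactness/time-shift skeleton for the long-time limit matches the paper's. But there is a genuine gap: your proof uses \emph{only} the energy \eqref{eq:energyreg_limit} and the mass, whereas the paper's argument rests equally on a second Lyapunov functional, the entropy $\Phi[n]=\int_0^{R_b} r\,\phi(n)\diff r$ with $\phi(n)=n(\log n -1)+1$ (Lemma \ref{lem:cons_mass_ene_ent}, estimate \eqref{eq:entropyreg}), and several steps of the theorem cannot be closed without it. First, nonnegativity of $n$: Definition \ref{def:weak} requires $n\ge 0$, but the regularized problem with non-degenerate mobility $m_\eps\ge\eps$ is fourth order and has no maximum principle, so its solutions may go negative; the paper recovers $n\geq 0$ in the limit precisely by the blow-up of $\phi_\eps$ on sets where $n_\eps\le -\alpha$ (Step 2 of the existence proof). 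Your proposal never addresses this. Second, the energy only gives $\sqrt{r}\,\p_r n\in L^\infty_t L^2_r$, i.e.\ $H^1$-type control, while the entropy gives the key bounds $\sqrt{r}\,\p_{rr}n\in L^2_{t,r}$ and $\p_r n/\sqrt{r}\in L^2_{t,r}$ (\ref{item_estim4}, and \ref{item_estim13}--\ref{item_estim23} for the shifts). These are what allow the paper to (i) make sense of $\mu_\infty$, which contains second derivatives of $n_\infty$, (ii) identify the weak limit of the degenerate flux on the sets $P_\nu=\{rn>\nu,\ r>\nu\}$ --- the step you yourself flag as delicate, but for which energy bounds alone do not control the third-derivative term in $\p_r\mu_\eps$ --- and (iii) upgrade $n_\infty$ to $C^1(\overline{I_{R_b}})$ with $n_\infty'(0)=n_\infty'(R_b)=0$.

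Your substitute for (iii) --- elliptic bootstrap inside $\{n_\infty>0\}$ plus ``matching'' at the free boundary --- is circular: the matching conditions $n_\infty=\p_r n_\infty=0$ at $\p\{n_\infty>0\}$ are part of what must be proved, and with only $H^1$ (hence $C^{1/2}$) control there is no reason the one-sided derivative from the positive side vanishes there; likewise $n_\infty'(0)=0$ at the coordinate singularity does not follow. The paper gets all of this from the entropy bounds: the Cauchy--Schwarz inequality
\begin{equation*}
\bigl(\p_{r}n_{\infty}(r_{2})\bigr)^{2}-\bigl(\p_{r}n_{\infty}(r_{1})\bigr)^{2}
\le 2 \left(\int_{r_{1}}^{r_{2}}\f{|\p_{r}n_{\infty}|^{2}}{r}\diff r\right)^{1/2}\left(\int_{r_{1}}^{r_{2}}r\,|\p_{rr}n_{\infty}|^{2}\diff r\right)^{1/2}
\end{equation*}
shows $(\p_r n_\infty)^2$ is uniformly continuous, hence $\p_r n_\infty$ is (Lemma \ref{lem:f_UC_from_f2_UC}), and the finiteness of $\int_0^{R_b}|\p_r n_\infty|^2/r\diff r$ forces $n_\infty'(0)=0$; the condition $n_\infty'(R_b)=0$ is then inherited from the approximations by a separate integration-by-parts argument. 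To repair your proof you would need to add the entropy estimate (with $\phi_\eps''=1/B_\eps$ adapted to your mobility regularization, and with the term $\int r\,\p_r n\,\p_r V$ handled via the energy bounds, since it has no sign) and rerun your limit arguments with these additional compactness ingredients.
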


Our second result characterizes possible stationary states and shows we can distinguish an interval where $n_\infty=0$ and another where $\mu_{\infty}+V$ is constant as expected from the first equation in~\eqref{eq:thm1_stationary}. From now on, we consider the confining potential $V(r)=r^{2}$ for simplicity. The proof may be adapted to any increasing potential. 

\begin{mainthm}[Characterization of the stationary states]
\label{thm:finalthm2}
 Let $n_{\infty} \in C^1([0,R_{b}])$, $n_{\infty} \geq 0$, be a solution of \eqref{eq:thm1_stationary} as built in Theorem~\ref{eq:thm1_stationary}.
 \\
 \textbf{(A)} Then, $n_{\infty}$ is nonincreasing and it satisfies $0 \leq n_{\infty}(R_{b}) < \frac{2m}{R_{b}^{2}}$.
 \\
 \textbf{(B)} Assume $n_{\infty}(R_{b}) = 0$ and let $R > 0$ be the smallest argument such that $n_{\infty}(R)=0$ and thus $n_{\infty} > 0$ in $[0,R)$. Then, there is $\lambda_{\infty} \in (0,R^2)$ such that 
\begin{equation}\label{eq:model}
\begin{cases}
n_{\infty}^{\gamma} - \f{\delta}{r}n_{\infty}'-\delta n_{\infty}'' = R^2 - r^2 - \lambda_{\infty} &\mbox{ in } (0,R),
\\
n_{\infty}(R)=n_{\infty}'(R)=0.
\end{cases}
\end{equation}
and, given $R>0$, there is at most one couple $(n, \lambda)$ solving \eqref{eq:model}.
\\
\textbf{(C)} Fix $\delta \in (0,1)$. There exists $R(m)$, independent of $\gamma$, such that when $R_b > R(m)$, then $n_{\infty}(R_b) = 0$.
\end{mainthm}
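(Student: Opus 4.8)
The plan for \textbf{(A)} is to obtain monotonicity from a maximum principle applied to $w:=n_\infty'$. On any interval where $n_\infty>0$, the first relation in \eqref{eq:thm1_stationary} forces $\mu_\infty+r^2\equiv C$ for a constant $C$, i.e. $n_\infty^\gamma-\delta n_\infty''-\f\delta r n_\infty'+r^2=C$; since $x\mapsto x^\gamma$ is smooth for $x>0$, elliptic regularity makes $n_\infty$ smooth there and differentiating in $r$ gives $\delta\left(w''+\f1r w'\right)-\left(\f\delta{r^2}+\gamma n_\infty^{\gamma-1}\right)w=2r>0$. The zeroth-order coefficient is nonnegative, so $w$ cannot attain a positive interior maximum. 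At the endpoints of every connected component of $\{n_\infty>0\}$ the value of $w$ vanishes: at $r=0$ and $r=R_b$ by the Neumann conditions, and at any interior zero of $n_\infty$ because such a point is a minimum of the nonnegative $C^1$ function $n_\infty$. Hence $w\le0$ on each component and $w\equiv0$ elsewhere, so $n_\infty$ is nonincreasing. The bound then follows from $m=\int_0^{R_b} r\, n_\infty\,\diff r\ge n_\infty(R_b)\int_0^{R_b} r\,\diff r=n_\infty(R_b)\f{R_b^2}2$; equality would force $n_\infty\equiv n_\infty(R_b)$ (nonnegative integrand), which cannot solve the equation unless $n_\infty\equiv0$, contradicting $m>0$, so the inequality is strict.

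For \textbf{(B)}, monotonicity with $n_\infty(R_b)=0$ shows $\{n_\infty=0\}=[R,R_b]$, so $n_\infty>0$ on $[0,R)$ and $n_\infty(R)=n_\infty'(R)=0$ by $C^1$ matching; on $(0,R)$ one has $n_\infty^\gamma-\f\delta r n_\infty'-\delta n_\infty''=C-r^2$, which is \eqref{eq:model} with $\lambda_\infty:=R^2-C$. To locate $\lambda_\infty$ I would multiply by $r$ and integrate over $(0,R)$: the flux term $[r\,n_\infty']_0^R$ vanishes, giving $C\,\f{R^2}2=\f{R^4}4+\int_0^R r\,n_\infty^\gamma\,\diff r>\f{R^4}4$, hence $C>0$ and $\lambda_\infty<R^2$. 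For $\lambda_\infty>0$, evaluate the equation at $R$ to get $-\delta n_\infty''(R)=C-R^2$; positivity of $n_\infty$ to the left of $R$ forces $n_\infty''(R)\ge0$, so $C\le R^2$, and the borderline $C=R^2$ is excluded because then $-\delta(r\,n_\infty')'=(C-r^2-n_\infty^\gamma)\,r$ is positive just left of $R$, making $r\,n_\infty'$ decrease through its zero at $R$ and thus $n_\infty'>0$ near $R^-$, contradicting monotonicity. Thus $\lambda_\infty\in(0,R^2)$.

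Uniqueness of $(n,\lambda)$ for fixed $R$ is where a comparison argument does the work. Given two solutions $(n_1,\lambda_1)$ and $(n_2,\lambda_2)$, I would subtract the equations and write $n_1^\gamma-n_2^\gamma=c(r)(n_1-n_2)$ with $c(r)=\gamma\xi^{\gamma-1}>0$ (both densities being positive on $(0,R)$), so $w=n_1-n_2$ solves $-\delta\left(w''+\f1r w'\right)+c(r)\,w=\lambda_2-\lambda_1$ with $w(R)=w'(R)=0$ and $w'(0)=0$. The positive zeroth-order term $c$ with the strong maximum principle rules out an interior extremum of the sign of $\lambda_2-\lambda_1$, while the Hopf lemma at $r=0$ (Neumann) and at $r=R$ (where both $w$ and $w'$ vanish) forces $w\equiv0$, whence $\lambda_1=\lambda_2$.

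The strategy for \textbf{(C)} is a contradiction producing a $\gamma$-free bound on $R_b$. If $n_\infty(R_b)>0$, then by (A) $n_\infty>0$ on all of $[0,R_b]$ and $\mu_\infty+r^2\equiv C$, i.e. $-\delta\left(n_\infty''+\f1r n_\infty'\right)=C-r^2-n_\infty^\gamma$. Integrating against $r\,\diff r$ and using both Neumann conditions kills the flux, and since $n_\infty^\gamma\ge0$ we obtain the lower bound $C\ge\f{R_b^2}2$. For a matching upper bound I would compare $n_\infty$ with the explicit solution $W$ of $-\delta\left(W''+\f1r W'\right)=C-r^2$, $W'(0)=0$, $W(R_b)=n_\infty(R_b)$: because $-\delta\,\Delta(n_\infty-W)=-n_\infty^\gamma\le0$, the difference $n_\infty-W$ is subharmonic with vanishing derivative at $0$ and vanishing value at $R_b$, so $n_\infty\le W$. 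Integrating $n_\infty\le W$ against $r\,\diff r$ with $\int_0^{R_b} r\,n_\infty\,\diff r=m$ bounds $C$ above by $\f{R_b^2}3+\f{16\delta m}{R_b^4}$. Combining gives $\f{R_b^2}6\le\f{16\delta m}{R_b^4}$, i.e. $R_b^6\le 96\,\delta\,m$, a contradiction once $R_b>R(m):=(96\,\delta\,m)^{1/6}$. The key point, and the main obstacle throughout, is $\gamma$-uniformity: it is resolved by noting that $\gamma$ enters only through $n_\infty^\gamma\ge0$, always used with a favourable sign, so that neither the lower bound, the barrier $W$, nor the final estimate depends on $\gamma$, while $\delta<1$ only serves to keep $R(m)$ finite.
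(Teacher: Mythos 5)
Your parts (A) and (B) are correct, and both take routes genuinely different from the paper's. For (A), you differentiate the equation on $\{n_\infty>0\}$ and run a maximum principle on $w=n_\infty'$ (no positive interior maximum, $w=0$ at the endpoints of each component of positivity); the paper instead argues by continuation: a local maximum at some $R_2>0$ forces strict concavity at a nearby point $R_1<R_2$, which propagates leftwards until it collides with $n_\infty'=0$ at $r=0$ or at a zero of $n_\infty$. Your version is arguably cleaner, and it also justifies the \emph{strict} inequality $n_\infty(R_b)<2m/R_b^2$, which the paper leaves implicit. For (B), your integration of $r$ times \eqref{eq:model_positive_part} to get $\lambda_\infty<R^2$ (in fact $\lambda_\infty<R^2/2$) replaces the paper's maximum-principle lemma (if $\lambda\ge R^2$ then $n\le0$ and $n'(R)>0$); the exclusion of $\lambda_\infty\le 0$ is essentially the same in both (evaluate at $R$; for the borderline case the paper differentiates the ODE to get $n'''(R)=2R/\delta>0$ while you use the sign of $(rn_\infty')'$, which is equivalent). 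For uniqueness, the paper compares two solutions via monotonicity in $\lambda$ of the Dirichlet problem plus a Taylor expansion at $R$, whereas you linearize the difference and use the maximum principle together with Hopf's lemma at $r=R$ where $w=w'=0$; this works (note you only need $c(r)\ge0$ there, so the positivity of the densities is not actually required).

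Part (C), however, contains a genuine gap: your comparison inequality points in the wrong direction for the conclusion you draw from it. You correctly obtain $n_\infty\le W$ (the difference is subharmonic with zero boundary value), but integrating this against $r\diff r$, with $a:=n_\infty(R_b)>0$, gives
\begin{equation*}
m=\int_0^{R_b} r\,n_\infty \diff r\;\le\;\int_0^{R_b} r\,W \diff r=\frac{a R_b^2}{2}+\frac{C R_b^4}{16\delta}-\frac{R_b^6}{48\delta},
\end{equation*}
which rearranges to the \emph{lower} bound $C\ge \frac{R_b^2}{3}+\frac{16\delta}{R_b^4}\bigl(m-\frac{aR_b^2}{2}\bigr)$, not the upper bound $C\le\frac{R_b^2}{3}+\frac{16\delta m}{R_b^4}$ that you claim. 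This is unavoidable: $W$ is pointwise increasing in $C$, so the constraint that the mass of $n_\infty$ not exceed the mass of $W$ can never cap $C$. Your Step 1 ($C\ge R_b^2/2$) and your barrier step therefore produce two lower bounds on $C$, and no contradiction follows.

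To make this strategy work one needs the opposite estimate: a pointwise lower barrier for $n_\infty$, so that $C\ge R_b^2/2$ forces the mass to be large. But since $-\delta\Delta n_\infty=C-r^2-n_\infty^\gamma$ sits \emph{below} the equation for $W$, a lower barrier requires an upper bound on $n_\infty^\gamma$ --- and this is exactly where $\gamma$-uniformity becomes the real difficulty, contrary to your closing remark that $n_\infty^\gamma\ge0$ is ``always used with a favourable sign.'' This is in fact the content of the paper's proof of (C): it first reduces, via mass dichotomies, to the cases $n_\infty(R_b)^\gamma\le R_b^2$ and $n_\infty(0)\le R_b^2/4$, then uses the integrated identity $\int_0^r\bar r\,n_\infty^\gamma \diff\bar r-\delta r\, \p_r n_\infty=(R_b^2-\lambda)\frac{r^2}{2}-\frac{r^4}{4}$ together with these reductions to obtain a lower bound on $|\p_r n_\infty|$, hence on $n_\infty$ itself, hence on $m$, yielding the contradiction for large $R_b$. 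You would need to repair your Step 2 along these lines rather than by the subharmonic comparison.
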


%

Next, we focus on the incompressible limit of the solutions of~\eqref{eq:model}, that is when  $\gamma_k \to \infty$. We denote by $n_k$ the steady state associated with $\gamma_k$ and assume that $R_b$ is large enough so that $n_k(R_b)=0$.

\begin{mainthm}[Incompressible limit of the stationary states]
\label{thm:finalthm3}
Let $\{\gamma_k\}_{k \in \mathbb{N}}$ be any sequence such that $\gamma_k \to \infty$.  Let $\{n_k\}_{k \in \mathbb{N}}$ be a sequence of stationary states with the same mass $m$ and with radius $R_{k}$, being the smallest argument such that $n_{k}(r)=0$. 

Then, $n_k \to n_{inc}$  strongly and $R_{k}\to R$, where $n_{inc}$ and $R$ are uniquely defined in Proposition~\ref{mainthm3}.
Moreover,  the sequence of pressures $\{p_k:=n_{k}^{\gamma_{k}}\}_{k\in\N}$ converges weakly to some pressure $p_{inc}$ such that $p_{inc}(n_{inc}-1)=0$ and $p_{inc}$ has a jump at
$\p\{n_{inc}=1\}$ 
$$
\llbracket p_{inc}\rrbracket \approx \sqrt[3]{6} \, R^{2/3} \, \delta^{1/3},  \qquad \mbox{as }\delta \to 0.
$$ 
\end{mainthm}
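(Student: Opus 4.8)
The plan is to prove the statement in two stages: first pass to the incompressible limit $\gamma_k\to\infty$ at fixed $\delta$ to identify $n_{inc}$, $p_{inc}$ and the jump as an algebraic quantity, and then perform a boundary-layer analysis of the limiting profile as $\delta\to 0$ to extract its leading-order size.

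\textbf{Step 1 (incompressible limit).} For each $k$, Theorem~\ref{thm:finalthm2} provides the profile equation \eqref{eq:model}, namely $n_k^{\gamma_k}-\f{\delta}{r}n_k'-\delta n_k''=R_k^2-r^2-\lambda_k$ on $(0,R_k)$ with $n_k(R_k)=n_k'(R_k)=0$ and mass $m=\int_0^{R_k}r\,n_k\diff r$. I would first derive uniform (in $k$) bounds: the mass constraint together with the monotonicity of Theorem~\ref{thm:finalthm2}(A) yields $L^\infty$ and $H^1$ estimates, and testing the equation gives an $H^2$-type bound. This allows extraction of a subsequence with $n_k\to n_{inc}$ strongly, $R_k\to R$, and $p_k=n_k^{\gamma_k}\rightharpoonup p_{inc}$ weakly, the limit being the unique object of Proposition~\ref{mainthm3}. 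The constraint $n_{inc}\le 1$ and the complementarity $p_{inc}(n_{inc}-1)=0$ follow by the standard stiff-pressure argument (values $n>1$ are excluded since $n^{\gamma_k}$ would blow up, while $n<1$ forces $p_{inc}=0$).

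\textbf{Step 2 (structure of the limit and the jump as a formula).} Since $n_{inc}$ is nonincreasing, bounded by $1$, and has a saturated core when $\delta$ is small, I expect $\{n_{inc}=1\}=[0,R_I]$ for an interface radius $R_I\in(0,R)$. On $[0,R_I]$ all derivatives vanish and the limit equation gives $p_{inc}=R^2-r^2-\lambda_{inc}$; on $(R_I,R)$ the complementarity forces $p_{inc}=0$, so $n_{inc}$ solves the linear equation $-\f{\delta}{r}(r\,n_{inc}')'=R^2-r^2-\lambda_{inc}$. Passing the uniform $C^1$ bounds to the limit gives the four conditions $n_{inc}(R_I)=1$, $n_{inc}'(R_I)=0$, $n_{inc}(R)=0$, $n_{inc}'(R)=0$. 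Because $p_{inc}(R_I^{+})=0$ while $p_{inc}(R_I^{-})=R^2-R_I^2-\lambda_{inc}$, the jump is exactly
\[
\llbracket p_{inc}\rrbracket = R^2-R_I^2-\lambda_{inc}.
\]

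\textbf{Step 3 (boundary layer as $\delta\to0$).} Set $\varepsilon:=R-R_I$; I claim $\varepsilon\to0$ and that the transition concentrates in a layer of width $\varepsilon$ near $r=R$. Writing $s=R-r$ and $w(s)=n_{inc}(R-s)$, the profile equation becomes $-\delta\,w''+\f{\delta}{R-s}\,w'=2Rs-s^2-\lambda_{inc}$, with $w(0)=w'(0)=0$ and $w(\varepsilon)=1$, $w'(\varepsilon)=0$. Since $w'\sim1/\varepsilon$ and (self-consistently) $\delta\sim R\varepsilon^3$, the curvature term $\f{\delta}{R-s}w'$ is smaller than the remaining terms by a factor $\varepsilon/R$ and is dropped at leading order. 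Integrating $-\delta w''=2Rs-s^2-\lambda_{inc}$ twice from $s=0$ and imposing $w'(\varepsilon)=0$ gives $\lambda_{inc}=R\varepsilon-\f{\varepsilon^2}{3}$, while $w(\varepsilon)=1$ gives $\f{R\varepsilon^3}{6\delta}\bigl(1+O(\varepsilon/R)\bigr)=1$, hence $\varepsilon=(6\delta/R)^{1/3}(1+o(1))$. Substituting $R_I=R-\varepsilon$ into the formula of Step 2 yields $\llbracket p_{inc}\rrbracket=R\varepsilon-\f{2\varepsilon^2}{3}=R\varepsilon(1+o(1))=\sqrt[3]{6}\,R^{2/3}\delta^{1/3}(1+o(1))$, which is the claim.

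\textbf{Main obstacle.} The delicate points lie in Steps~1--2 rather than in the final computation: one must produce compactness strong enough (uniform $C^1$, or $H^2$ bounds independent of $\gamma_k$) to transfer the matching condition $n_{inc}'(R_I)=0$ to the limit, and to prove that $\{n_{inc}=1\}$ is genuinely an interval across which $p_{inc}$ jumps, instead of a continuous transition. Once the linear profile equation and its four conditions are secured, the asymptotics of Step~3 are essentially routine; the only care required is to confirm that the discarded curvature term and the quadratic-in-$\varepsilon$ corrections are of lower order, which the scaling $\delta\sim R\varepsilon^3$ indeed guarantees.
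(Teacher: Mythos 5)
Your Steps 1--2 follow essentially the paper's own route (compactness via an $H^2$ bound obtained by testing with $n_k''$, stiff-pressure complementarity, two-zone structure, and the jump written as $R^2-R_0^2-\lambda$), and your Step 3 arithmetic is correct. But there is one genuine gap, and it sits exactly at the crux of the theorem: you only \emph{expect} that $\{n_{inc}=1\}$ is a nonempty interval. Nothing in your compactness argument rules out the alternative that $n_{inc}<1$ everywhere, in which case $p_{inc}\equiv 0$ and there is no jump at all; your own ``main obstacle'' paragraph names this issue but offers no mechanism to resolve it. The paper closes it by contradiction: if $n_{inc}<1$ on $[0,R]$, then $p_{inc}\equiv 0$ and $n_{inc}$ would be a $C^1$ solution of the over-determined linear problem $-\f{\delta}{r}n'-\delta n''=R^2-r^2-\lambda$ with $n(R)=n'(R)=n'(0)=0$. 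By the explicit formula of Proposition~\ref{mainthm2}(A), $u'(r)=\frac{(R^2-r^2)(R^2-r^2-2\lambda)}{4\delta\,r}$ is singular at $r=0$ for every $\lambda\neq R^2/2$, and the exceptional profile $u=(R^2-r^2)^2/(16\delta)$ is incompatible with the constraints $n_{inc}\le 1$ and $\int_0^R r\,n_{inc}=m$ under the mass hypothesis of Proposition~\ref{mainthm3}. Some argument of this kind must be supplied, otherwise the existence of a jump is unproved. A second, smaller gap: extracting a weak$^*$ limit of $p_k=n_k^{\gamma_k}$ requires a uniform $L^\infty$ bound on the pressures, which comes from the maximum principle evaluated at the maximum point $r=0$ (giving $n_k^{\gamma_k}\le R_b^2$); your proposed route via ``mass plus monotonicity'' bounds $n_k$ only away from the origin and bounds neither $n_k(0)$ nor $p_k$.

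Where you genuinely diverge from the paper is the $\delta\to 0$ asymptotics, and there your route is a legitimate alternative. The paper never drops the curvature term: it uses the exact solution of the linear equation on $(R_0,R)$, reduces the matching conditions $u_c(R_0)=1$, $u_c'(R_0)=0$ to the scalar equation $f(x_c)=8\delta/R^4$ with $f(x)=(1-x)\ln(1-x)-\tfrac12 x^2+x$ and $x_c=2\lambda_c/R^2$, and Taylor-expands $f(x)\approx x^3/6$. This buys two things your boundary-layer computation does not: the exact identity $R^2-R_0^2=2\lambda_c$, so that $\llbracket p_{inc}\rrbracket=\lambda_c$ exactly rather than up to $O(\varepsilon^2)$ corrections, and rigorous two-sided bounds on $x_c$ that feed into the mass formula $m=R^6x_c^3/(96\delta)$ (Lemma~\ref{lem:mass_formula}), which is what pins down $R$ uniquely and hence gives convergence of the whole sequence. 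Your computation (drop $\f{\delta}{r}n'$, integrate twice, get $\varepsilon\approx(6\delta/R)^{1/3}$ and $\lambda\approx R\varepsilon$) reproduces the same leading order and the self-consistency check $\delta\sim R\varepsilon^3$ is sound as a formal argument; to make it a proof you would still have to bound the error from the discarded term, which is unnecessary here since the exact solution is available.
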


\noindent The profile $n_{inc}$ obtained for the incompressible limit of stationary states is depicted in Figure~\ref{plot:K}. The density is equal to $1$ on a certain interval $(0, R_0)$ where the pressure is positive. Then, the pressure vanishes and the density  decreases to $0$ on a small interval $(R_0, R)$. At the boundary point $R_0$ the pressure undergoes a jump, which depends on the surface tension coefficient $\delta$ and on the shape of the confinement potential $V$. More precisely, for a general potential $V(r)$, this jump is determined by

\begin{equation}\label{eq:jump_pressure}
 \llbracket p_{inc} \rrbracket\approx\frac{\sqrt[3]{12}}{2} \,\delta^{1/3} \, (V'(R))^{2/3}\quad \text{as $\delta\to 0$},
\end{equation}
where $R$ is the smallest value where $n(R)=0$ and we have the estimate $R^{2}-R_{0}^{2} \approx \f{2\sqrt[3]{12}\delta^{1/3}R}{\sqrt[3]{V'(R)}}$. 
We point out that the limiting profile (including parameters $R_0$ and $R$) is uniquely determined in terms of mass $m$, $\delta$ and $V$  cf. Proposition \ref{mainthm3}.\\

\begin{figure}
\begin{center}

\begin{tikzpicture}

\draw[line width=0.3mm,->] (0,0) -- (8.55,0) node[anchor=north west] {$x$};
\draw[line width=0.3mm,->] (0,0) -- (0,5);

\node at (-0.4, 4.0) {$1$};
\draw (-0.2,4.0) -- (0.2,4.0);

\node at (6,-0.5) {$R_0$};
\draw (6,-0.2) -- (6,0.2);
\node at (7,-0.5) {$R$};
\draw (7,-0.2) -- (7,0.2);
\node at (8,-0.5) {$R_b$};
\draw (8,-0.2) -- (8,0.2);

\draw (0,4) -- (6,4);
\draw (6,4) .. controls (6.3,4) and (6.7, 0) .. (7,0);

\draw[scale=1, line width=0.4mm, domain=0:6, smooth, variable=\x, blue] plot ({\x}, {4.5-\x*\x/12});
\draw[scale=1, line width=0.4mm, domain=6:8.50, smooth, variable=\x, blue] plot ({\x}, {0});
\draw[densely dashed,line width=0.4mm, blue] (6,1.5) -- (6,0);

\draw[line width=0.3mm,-, blue] (9,4)--(9.5,4) node[anchor=west] {pressure $p_{inc}$};
\draw[line width=0.3mm,-] (9,3.5)--(9.5,3.5) node[anchor=west] {density $n_{inc}$};

\end{tikzpicture}
\vspace{-1mm}
\caption{Plot of the limiting profile $n_{inc}$, as $\gamma_k \to \infty$, for the potential $V(r)=r^{2}$. We can observe that the pressure has a discontinuity at $R_0$ with  $(0,R_0)=\{n_{inc}=1\}=\{p_{inc}>0\}$, while the density remains $C^1$.}
\label{plot:K}
\end{center}
\end{figure}

In the above statements, the main novelty concerns the incompressible limit $\gamma \to \infty$ for the stationary states. A previous work in this direction \cite{elbar-perthame-poulain} made use of viscosity relaxation, which provided additional estimates implying compactness. In our case, assuming the radial symmetry of the problem, we are able to characterize the incompressible limit of the sequence of compactly supported stationary solutions. While our setting is restrictive, it allows performing many computations explicitly. In particular, we find how the pressure jump depends on $V$ and $\delta$, cf. \eqref{eq:jump_pressure}.\\

\noindent \textbf{Open question.} In this paper, we prove that the stationary states are compactly supported or at least zero on the boundary if the domain is large enough. It is logical to ask whether the solutions of the parabolic equation are compactly supported for a large domain and a strong confining potential. This question is still open. However, a work in this direction \cite{finite_speed} has proved that in dimension $1$, one could expect the solutions of the Cahn-Hilliard equation without confining potential to propagate with finite speed. By adding this potential, we can expect to have a better result, and compactly supported solutions, with time-independent support.

\paragraph{Contents of the paper.}
The above theorems are proved in the following sections. Section~\ref{sect:existence}, is devoted to prove Theorem~\ref{thm:finalthm1}. In Section~\ref{sect:stationary_states} we prove Theorem \ref{thm:finalthm2} and in Section~\ref{sec:proof3} we give the proof of our main new result, namely Theorem~\ref{thm:finalthm3}. Numerical simulations of the model with a source term and no confining potential are presented in Section~\ref{sec:num}. The appendix contains the computation of the pressure jump for a general confining potential. \\

\paragraph{Notations.}

For a function $n(x,t)$ we associate a function in radial coordinates that is still denoted by $n(r,t)$. 
For $1\le p,s\le +\infty$ or $s=-1$ and $\Omega$ a domain, $L^{p}(\Omega), H^{s}(\Omega)$ denote the usual Lebesgue and Sobolev spaces. When $s=-1$, $H^{-1}(\Omega)$ is the topological dual of $H^{1}(\Omega)$. Here $H^{s}(\Omega)=W^{s,2}(\Omega)$ in the usual notation. We also consider the Bochner spaces $L^{p}(0,T;H^{s}(\Omega))$ associated with the norm
\begin{equation*}
    \norm{f}_{L^{p}(0,T;H^{s}(\Omega))}=\left(\int_{0}^{T}\norm{f}_{H^{s}(\Omega)}^{p}\right)^{1/p} .
\end{equation*}
The partial derivative with respect to the radial variable is written as  $\p_{r}u(r)=\f{\p u}{\p r}(r)=u'(r)$. Finally, $C$ denotes a generic constant which appears in inequalities and whose value can change from one line to another. This constant can depend on various parameters unless specified otherwise.

\subsection{Literature review and biological relevancy of the system}

\paragraph{Tissue growth models and Hele-Shaw limits.} Development of tissue growth models is presently a major line of research in mathematical biology.
Nowadays, number of models are available \cite{byrne_modelling_2004,Friedman,Lowengrub-bridging} with the common feature that they use the tissue internal pressure as the main driver of both the cell movement and proliferation. The simplest example of a mechanical model of living tissue is the {\em compressible} equation
\begin{equation}
    \p_t n = \Div\left(n\nabla p \right) + nG(p),\quad p=P_\gamma(n):=n^{\gamma}, 
    \label{eq:general-mod}
\end{equation}
in which $p(t,x)=P(n(t,x))$, with $P$ a law of state, is the pressure and $n$ the density of cell number. 
Here, the cell velocity is given via Darcy's law which captures the effect of cells moving away from regions of high compression. Dependence on growth function pressure has also been used to model the sensitivity of tissue proliferation to compression (contact inhibition, \cite{BD}).

An important problem is to understand the so-called incompressible limit  (\ie $\gamma\to \infty$) of this model. Perthame \etal~\cite{Perthame-Hele-Shaw} have shown that in this limit, solutions of \eqref{eq:general-mod} converge to a limit solution $(n_\infty,p_\infty)$ of a Hele-Shaw-type free boundary limit problem for which the speed of the free boundary is given by the normal component of $\nabla p_\infty$, see also other approaches in~\cite{KT18, LiuXu}. In this limit, the solution of \eqref{eq:general-mod} is organized into 2 regions: $\Omega(t)$ in which the pressure is positive (corresponding to the tissue) and outside of this zone where $p=0$. Furthermore, the free boundary problem is supplemented by a complementary equation that indicates that the pressure satisfies
\begin{equation}
    -\Delta p_\infty = G(p_\infty),\quad \text{in}\quad \Omega(t),\quad \text{or similarly}\quad p_\infty(\Delta p_\infty +  G(p_\infty)) = 0 \quad \text{a.e. in } \Omega.
    \label{eq:complem}
\end{equation}

In this model, the pressure stays continuous in space, with jumps in time, and is equal to 0 at the interface. This is because only repulsive forces were taken into account. Hence, the crucial role of the cell-cell adhesion and thus the pressure jump at the surface of the tissue is not retrieved at the limit. Additionally, as pointed out by Lowengrub~\etal~\cite{Lowengrub-bridging}, the velocity of the free surface should depend on its geometry and more precisely on the local curvature denoted by~$\kappa$. 

This motivated considering variants of the general model~\eqref{eq:general-mod}, where other physical effects of mechanical models of tissue growth are introduced. 
One of them is the addition of the effect of viscosity in the model, which has been made to represent the friction between cells~\cite{Basan,Bittig_2008} through the use of Stokes' or Brinkman's law. 
Moreover, as pointed out by Perthame and Vauchelet~\cite{Perthame-incompressible-visco}, Brinkman's law leads to a simpler version of the model and, therefore, is a preferential choice for its mathematical analysis. 
Adding viscosity through the use of Brinkman's law leads to the model  
\begin{equation}
\begin{cases}
    \p_t n = \Div\left(n\nabla \mu \right) + nG(p), \quad &\text{in}\quad (0,+\infty)\times\Omega,\\
    -\sg \Delta \mu + \mu = p,\quad &\text{in}\quad (0,+\infty)\times\Omega.
    \end{cases}
    \label{eq:visco-living}
\end{equation}
The incompressible limit of this system also yields the complementary relation (see~\cite{Perthame-incompressible-visco})
\[
    p_\infty(p_\infty - \mu_\infty - \sigma G(p_\infty)) = 0,\quad \text{a.e. in } \Omega.
\]
In the incompressible limit, notable changes compared to the system with Darcy's law are found. First, the previous complementary relation is different compared to Equation~\eqref{eq:complem}, and the pressure $p_\infty$ in the limit is discontinuous, \ie there is a jump of the pressure located at the surface of $\Omega(t)$.
However, the pressure jump is related to the potential $\mu$ and not to the local curvature of the free boundary $\p \Omega(t)$. The authors already indicated that a possible explanation for this is that the previous model does not include the effect of surface tension.

\paragraph{Surface tension and pressure jump.}

Surface tension is a concept associated with the internal cohesive forces between the molecules of a fluid: hydrogen bonds, van der Waals forces, metallic bonds, etc. Inside the fluid, molecules are attracted equally in all directions leading to a net force of zero; however molecules on the surface experience an attractive force that tends to pull them to the interior of the fluid: this is the origin of the surface energy. This energy is equivalent to the work or energy required to remove the surface layer of molecules in a unit area. The value of the surface tension will vary greatly depending on the nature of the forces exerted between the atoms or molecules. In the case of solid tumor cells in a tissue, it reflects the cell-cell adhesion tendency between the cells and depends on the parameter $\delta$ and the geometry of the tumor. 

In the previous definition, the surface tension is associated with a single body that has an interface with the vacuum. When one considers two bodies, the surface energy of each body is modified by the presence of the other and we speak of interfacial tension. The latter depends on the surface tension of each of the two compounds, as well as the interaction energy between the two compounds. In the system considered above, it is then possible to imagine that the vacuum in which the tumor grows is in fact another body that has an internal pressure of the form $V(r)$ which increases with respect to $r$ so that the tumor is stopped at some point and we can consider the stationary states. 

For such a tumor to be in equilibrium, it is necessary that the interior is overpressured relative to the exterior by an amount. This amount is called the pressure jump and is computed explicitly in our case. 

Surface tension effects can be introduced in  the Hele-Shaw model as follows (see \eg \cite{Escher-solutions})
\begin{equation}
    \begin{cases}
        -\Delta \mu = 0\quad \text{in}\quad \Omega \setminus \p\Omega(t),\\
        \mu = \sigma \kappa \quad \text{on} \quad \p\Omega(t).
    \end{cases}
\end{equation}
where $\sigma$ is a positive constant, called a surface tension and $\kappa$ is a mean curvature of $\partial \Omega(t)$. This correct Hele-Shaw limit has been formally obtained as the sharp-interface asymptotic model of the Cahn-Hilliard equation~\cite{Alikakos_convergence}; see also~\cite{Chen1996} for a convergence result in a weak varifold formulation. This suggests that the Cahn-Hillard equation is an appropriate model to capture surface tension effects.

\paragraph{The Cahn Hilliard equation.}

Cahn-Hillard type models for tissue growth have been developed based on the theory of mixtures in mechanics, see \cite{Frieboes-2010-CH,chatelain_2011,sciume}  and the references therein. Nowadays, they are widely used, in particular for tumor growth, and analysed,  \cite{Garcke-2016-CH-darcy,Agosti-CH-2017,Garcke-2018-multi-CH-darcy,Frigeri-CH-Darcy,ebenbeck_cahn-hilliard-brinkman_2018,Ebenbeck-Brinkman,perthame_poulain}. Originally introduced in the context of materials sciences \cite{Cahn-Hilliard-1958}, they are currently applied in numerous fields, including complex fluids, polymer science, and mathematical biology.  For the overview of mathematical theory, we refer to~\cite{MR4001523}.

Usually, in mechanical models, the Cahn-Hillard equation takes the form  
\begin{equation}
    \p_t \vp = \Div\big(b(\vp)\nabla \left(\psi^\prime(\vp)-\delta \Delta \vp \right)\big) \Longleftrightarrow \begin{cases}
        \p_t \vp &= \Div\left(b(\vp)\nabla \mu \right),\\
        \mu &= -\delta \Delta \vp +\psi^\prime(\vp),
    \end{cases}
    \label{eq:Cahn-Hilliard}
\end{equation}
where $\vp$ represents the relative density of cells $\vp=n_1/(n_1+n_2)$, $b$ is the mobility, $\psi$ is the potential while $\mu$ is the quantity of chemical potential, which is a quantity related to the effective pressure. From the point of view of mathematical biology, the most relevant case is $b(\vp) = \vp(1-\vp)$, which is referred to as degenerate mobility.
\\

In our context, \eqref{eq:CHR1} models the motion of a population of cells constituting a biological tissue in the form of a continuity equation. It takes into account pressure, the surface tension occurring at the surface of the tissue and its viscosity. More precisely,
the equation for $\mu$ (\ie equation~\eqref{eq:CHR2}) includes the effects of both the pressure, through the term $n^\gamma$ with $\gamma > 1$ that controls the stiffness of the pressure law, and surface tension by $- \delta \Delta n$, where $\sqrt\delta$ is the width of the interface in which partial mixing of the two components $n_1$, $n_2$ occurs. 

A similar Cahn-Hilliard problem, without radial symmetry assumption, has previously been considered in~\cite{elbar-perthame-poulain}, but including a relaxation (viscosity) term and a proliferation source term in place of the confinement potential. In the incompressible limit, the authors obtain a jump in pressure at the interface at all times for the relaxed system. The aim here is to justify a rigorous limit without viscosity relaxation and mostly to compute the pressure jump by analyzing the stationary states of a system with confining potential.

\section{Existence, regularity, and long term behavior}
\label{sect:existence}

The existence of weak solutions for the Cahn-Hilliard equation with degenerate mobility usually follows the method from~\cite{Garcke-CH-deg}. The idea is to apply a Galerkin scheme with a non-degenerate regularized mobility, \ie, calling $b(n)$ the mobility, then one considers an approximation $b_{\eps}(n)\ge\eps$. Then, using standard compactness methods one can prove the existence of weak solutions for the initial system. However, the uniqueness of the weak solutions is still an open question.

In the case of a radially symmetric solution, the resulting system has only one dimension in space, and it is possible to apply a fixed point theorem, see~\cite{Yin2001}, to obtain better regularity results.  Since the solutions have radial symmetry, the equation is singular at $r=0$. Therefore, the first step is to consider the system with $r+\eps$ instead of $r$ and a regularized mobility. The existence of solutions for a similar regularized system has been achieved in~\cite{Yin2001} based on a result of~\cite{JPDE-7-77} and we do not repeat the arguments here. Then, we can pass to the limit $\eps\to 0$. Finally, the nonnegativity of the limiting solution is achieved with the bounds provided by the entropy.

We finally point out that since we are also interested in the convergence to the stationary states, one needs to carefully verify that the bounds do not depend on time.

\begin{definition}[Weak solutions]
\label{def:weak}
We say that $n(t,r)$ is a global weak solution of the equation~\eqref{eq:CHR1}-\eqref{eq:CHR2} provided that  
\begin{itemize}
    \item $n$ is nonnegative,
    \item $r n$ is continuous in $[0,\infty)\times \overline{I_{R_{b}}}$, $\sqrt{r}\, n \in L^{\infty}((0,\infty)\times I_{R_{b}})$ and $r\,\p_{t}n\in L^{2}((0,\infty);H^{-1}(I_{R_{b}}))$,
    \item $\sqrt{rn}\p_{r}\mu\in  L^{2}((0,\infty) \times \overline{I_{R_{b}}}\setminus\{rn=0\})$ and $\mu$ is defined in~\eqref{eq:CHR2},
    \item for every test function $\varphi\in L^{2}((0,\infty);H^{1}(I_{R_{b}}))\cap C^{1}_c([0,\infty)\times {I_{R_{b}}})$
\begin{equation*}
\int_{0}^{T}r\langle\p_{t}n,\varphi\rangle_{H^{-1},H^{1}} \diff t +\int_0^T \int_{0}^{R_{b}}\mathds{1}_{rn>0}\, r\, n\, \p_{r}(\mu+V)\p_{r}\varphi \diff r \diff t=0,
\end{equation*} 
and 
\begin{equation*}
    \int_{0}^{T}r\langle\p_{t}n,\varphi\rangle_{H^{-1},H^{1}} \diff t =-\int_0^T \int_{0}^{R_{b}} rn\p_{t}\varphi \diff r \diff t -\int_{0}^{R_{b}} \varphi(0,r) n_{0}(r) \diff r,
\end{equation*}
\item $n'(t, R_{b})=0$ for a.e. $t\in(0,T)$.
\end{itemize}

\end{definition}

\begin{remark}[Energy, entropy properties of weak solutions]\label{rem:more_prop_weak_sol}
In fact, we construct solutions satisfying additionally mass, energy, and entropy relations as follows: for a.e. $\tau \in [0,T]$
\begin{equation}
\label{eq:cons_mass_limit}
\int_{0}^{R_{b}} r\, n(\tau,r) \diff r = \int_{0}^{R_{b}} r\, n_{0}(r) \diff r,
\end{equation}
\begin{equation}
\label{eq:energyreg_limit}
\mathcal{E}[n(\tau,\cdot)]+\int_{0}^{\tau}\int_{0}^{R_{b}} \mathds{1}_{rn>0}\, rn \left|{\p_r (\mu+V)}\right|^{2} \diff r \diff t\leq \mathcal{E}[n_{0}],
\end{equation}
\begin{equation}
\label{eq:entropyreg_limit}
\Phi[n(\tau,\cdot)]+\int_{0}^{\tau}\int_{0}^{R_{b}}\left(\gamma r n^{\gamma-1}|\p_{r}n|^{2}+\delta r |\p_{rr}n|^{2}+\delta\f{|\p_{r}n|^{2}}{r} + r\, \p_r n \, \p_r V \right) \diff r \diff t\leq \Phi[n_{0}],
\end{equation}
where energy and entropy are defined as follows:
$$
\mathcal{E}[n] = \int_{0}^{R_{b}}r\,\left(\frac{n^{\gamma+1}}{\gamma+1}+\frac{\delta}{2}\left| {\p_r n} \right|^{2}+n\,V\right) \diff r, \qquad \Phi[n] = \int_{0}^{R_{b}} r\, \phi(n) \diff r,
$$
and $\phi(n) = n \, (\log(n) - 1) + 1$. Equations \eqref{eq:cons_mass_limit}--\eqref{eq:entropyreg_limit} provide the basic a priori estimates. Moreover, we construct Hölder continuous solutions; there is a constant $C$, such that for all $r, r_1, r_2 \in[0,R_{b}]$, $t, t_1, t_2 \in [0,\infty)$
\begin{equation}\label{eq:Holder_in_space_limit}
    |r_1\, n(t,r_1)-r_2\, n(t,r_2)|\le C|r_1-r_2|^{1/2},
\end{equation}
\begin{equation}\label{eq:Holder_in_time_limit}
    |r\,(n(t_{2},r)-n(t_{1},r))|\le C|t_{2}-t_{1}|^{1/8}.
\end{equation}
\end{remark}

\subsection{Regularized system}

We consider the existence of a regularized system, which reads:
\begin{align}
\p_t (r+\eps)n_{\eps} - \p_r \left((r+\eps)B_{\eps}(n_{\eps}) \, {\p_r(\mu_{\eps}+V)} \right)=0,\quad \text{in}\quad &(0,+\infty)\times I_{R_{b}},\label{eq:CHRE1}\\
\mu_{\eps} =n_{\eps}^{\gamma}-\f{\delta}{r+\eps}{\p_r}\left((r+\eps) {\p_r n_{\eps}}\right),\quad \text{in}\quad &(0,+\infty)\times I_{R_{b}} \label{eq:CHRE2},    
\end{align}
where 
\begin{equation}
\label{regmob}
B_{\eps}(n)=
\begin{cases}
\eps\quad\text{for $n\le\eps$,}\\
n\quad\text{otherwise}.
\end{cases}
\end{equation}

\noindent We impose Neumann boundary conditions 
\begin{equation}
\label{Neumann_regularized}
\f{\p n_{\eps}}{\p r}\Big|_{r=0}=\f{\p n_{\eps}}{\p r}\Big|_{r=R_{b}}=B_{\eps}(n_{\eps})\f{\p(\mu_\eps+V)}{\p r}\Big|_{r=0}=B_{\eps}(n_{\eps})\f{\p(\mu_\eps+V)}{\p r}\Big|_{r=R_{b}}=0. 
\end{equation}

We admit the following theorem of existence, for a result for a similar system we refer to~\cite{Yin2001}, 
\begin{thm}\label{thm:classical_sol_approx_system}
For $\eps>0$ and $T>0$, Problem~\eqref{eq:CHRE1}-\eqref{eq:CHRE2} with boundary conditions~\eqref{Neumann_regularized} and smooth initial condition admits a unique strong solution $n_{\eps}$.
\end{thm}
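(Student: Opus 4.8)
The plan is to treat \eqref{eq:CHRE1}--\eqref{eq:CHRE2} as a single fourth-order quasilinear parabolic equation in one space dimension and solve it by a fixed-point argument, along the lines of \cite{Yin2001,JPDE-7-77}. Writing $a_\eps(r)=r+\eps$ and substituting $\mu_\eps$ into \eqref{eq:CHRE1}, the leading term is $\p_r\!\big(a_\eps B_\eps(n_\eps)\,\p_r(-\tfrac{\delta}{a_\eps}\p_r(a_\eps \p_r n_\eps))\big)$, whose principal coefficient is $\delta\,B_\eps(n_\eps)$. The crucial gain from the regularization is twofold: the weight $a_\eps$ is bounded away from zero on $[0,R_b]$, removing the singularity at $r=0$, and $B_\eps \ge \eps > 0$, so the fourth-order operator is \emph{uniformly} parabolic. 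This is what makes the linear theory available despite the absence of any maximum principle at fourth order.

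First I would set up the linearization. Fix a small time horizon $T^* \le T$ and, for a frozen function $v$ in a suitable parabolic space (say $v \in C^{\alpha,\alpha/4}$ with $\p_r v$ also Hölder, or $v \in C([0,T^*];H^1)$ with the appropriate extra regularity), replace the mobility $B_\eps(n_\eps)$ and the pressure term $\max(0,n_\eps)^\gamma$ by $B_\eps(v)$ and the drift generated by $P_\gamma'(v)$. The resulting problem is a linear fourth-order parabolic equation with Hölder-continuous, uniformly positive principal coefficient and with the Neumann-type conditions \eqref{Neumann_regularized}. I would solve it — either by Galerkin approximation combined with energy estimates, or by Solonnikov-type parabolic Schauder estimates — obtaining a unique solution $n$ together with quantitative bounds in the chosen norm.

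Next I would define the solution map $\Phi \colon v \mapsto n$ and show that, for $T^*$ small, it maps a closed ball into itself and is a contraction. Here one uses that $B_\eps$ is globally Lipschitz and bounded, that $P_\gamma'$ is locally Lipschitz (recall $\gamma > 1$, so $\max(0,\cdot)^\gamma$ is $C^1$), and that $V \in C^1$; the differences of two frozen inputs enter only through lower-order terms, which are absorbed by the linear estimates with a factor small for short time. The Banach fixed-point theorem then yields a unique local-in-time strong solution. I would finally extend it to all of $[0,T]$ using the $\eps$-regularized analogues of the energy and entropy estimates anticipated in Remark~\ref{rem:more_prop_weak_sol}: because these bounds are uniform in time, they preclude finite-time blow-up of the relevant norms and permit iterating the local construction up to the arbitrary final time $T$, while a standard bootstrap upgrades the solution to the claimed smoothness when the data are smooth.

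The hard part will be the linear fourth-order solvability together with the verification that $\Phi$ is a genuine contraction: unlike second-order problems there is no comparison principle, so every estimate must come from integration by parts and energy identities, and one must choose the function space carefully so that both the frozen coefficient $B_\eps(v)$ is regular enough for the linear theory and the nonlinear map closes on the same space. Matching the boundary conditions \eqref{Neumann_regularized} — which couple $\p_r n_\eps = 0$ with the vanishing of the flux $B_\eps(n_\eps)\p_r(\mu_\eps+V)$ — to the natural boundary conditions of the variational formulation is the main technical bookkeeping. Since this is precisely the setting handled in \cite{Yin2001,JPDE-7-77}, I would invoke their framework rather than reproduce the estimates in full.
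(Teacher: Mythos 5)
Your proposal takes essentially the same route as the paper: the paper does not prove this theorem at all, but \emph{admits} it, deferring to the fixed-point framework of \cite{Yin2001} (itself based on \cite{JPDE-7-77}), which is precisely the linearize-and-contract scheme you outline — uniform fourth-order parabolicity from $B_\eps \ge \eps$ and the nonsingular weight $r+\eps$, linear solvability with Hölder/Schauder-type estimates, Banach fixed point for short time, and continuation via the $\eps$-level energy and entropy bounds. Your sketch is therefore consistent with (and more detailed than) the paper's own treatment of this statement.
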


\begin{remark}
Note that the assumption on the initial condition is stronger than the one asked in~\eqref{eq:IDR}. This means that for the regularized system we need to consider a smooth approximation of the initial condition, for instance $n^{0}_{\eps}=n^{0}\ast \omega_{\eps}$ with $\omega$ a smooth kernel that we send to a dirac mass when $\eps\to 0$. 
\end{remark}

\noindent Next, we prove some conservation properties for the system~\eqref{eq:CHRE1}--\eqref{eq:CHRE2}, see for instance~\cite{elbar-perthame-poulain,perthame_poulain}.
\begin{lemma}[Conservation of mass, energy and entropy]\label{lem:cons_mass_ene_ent}
We define $\phi_\eps$ such that $\phi_{\eps}''(n)=\f{1}{B_{\eps}(n)}$ and $\phi_{\eps}(1) = \phi_{\eps}'(1) = 0$, and
\begin{align*}
\mathcal{E}_{\eps}[n]&:=\int_{0}^{R_{b}}(r+\eps)\left(\frac{n^{\gamma+1}}{\gamma+1}+\frac{\delta}{2}\left| {\p_r n} \right|^{2}+n\,V\right) \diff r,\\
\Phi_{\eps}[n]&:=\int_{0}^{R_{b}}(r+\eps)\phi_{\eps}(n) \diff r.
\end{align*}
 Then, we have
\begin{equation}
\label{eq:cons_mass}
\frac{d}{dt} \int_{0}^{R_{b}} (r+\eps)\, n_{\eps}(t,r) \diff r =0,
\end{equation}
\begin{equation}
\label{eq:energyreg}
\frac{d}{dt}\mathcal{E}_{\eps}[n_\eps]+\int_{0}^{R_{b}}(r+\eps)B_{\eps}(n_\eps)\left|{\p_r (\mu_\eps+V)}\right|^{2} \diff r=0,
\end{equation}
\begin{equation}
\label{eq:entropyreg}
\frac{d}{dt}\Phi_{\eps}[n_\eps]+\int_{0}^{R_{b}}\left(\gamma (r+\eps) n^{\gamma-1}_\eps|\p_{r}n_\eps|^{2}+\delta(r+\eps) |\p_{rr}n_\eps|^{2}+\delta\f{|\p_{r}n_\eps|^{2}}{r+\eps} + (r+\eps)\, \p_r n_\eps \, \p_r V \right) \diff r=0.
\end{equation}
\end{lemma}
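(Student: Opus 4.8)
The plan is to prove each of the three identities \eqref{eq:cons_mass}, \eqref{eq:energyreg} and \eqref{eq:entropyreg} by the same scheme: multiply the evolution equation \eqref{eq:CHRE1} by a suitable multiplier, integrate over $I_{R_b}$, and integrate by parts, discarding all boundary terms through the Neumann conditions \eqref{Neumann_regularized}. Since Theorem~\ref{thm:classical_sol_approx_system} provides a \emph{strong} solution $n_\eps$ of the regularized, non-degenerate problem, all these manipulations are legitimate (every quantity below is classical). The mass identity \eqref{eq:cons_mass} is immediate: integrating \eqref{eq:CHRE1} in $r$, the right-hand side is the total derivative $\p_r\big((r+\eps)B_\eps(n_\eps)\p_r(\mu_\eps+V)\big)$, whose integral equals the boundary values of $(r+\eps)B_\eps(n_\eps)\p_r(\mu_\eps+V)$, and these vanish by \eqref{Neumann_regularized}.

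For the energy identity \eqref{eq:energyreg}, I would differentiate $\mathcal{E}_\eps[n_\eps]$ in time and integrate by parts the term coming from $\tfrac\delta2|\p_r n_\eps|^2$, using $\p_r n_\eps=0$ at the endpoints. The key observation is that this integration by parts produces exactly $-\delta\,\p_r\big((r+\eps)\p_r n_\eps\big)$ acting on $\p_t n_\eps$, which by the definition of $\mu_\eps$ in \eqref{eq:CHRE2} equals $(r+\eps)(\mu_\eps-n_\eps^\gamma)\,\p_t n_\eps$. Combining with the $n_\eps^\gamma$ and $V$ contributions collapses the time derivative into $\int_0^{R_b}(r+\eps)(\mu_\eps+V)\,\p_t n_\eps\,\diff r$. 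I then substitute $(r+\eps)\p_t n_\eps$ from \eqref{eq:CHRE1} and integrate by parts once more; the boundary term again vanishes by \eqref{Neumann_regularized}, leaving the dissipation $\int_0^{R_b}(r+\eps)B_\eps(n_\eps)|\p_r(\mu_\eps+V)|^2\,\diff r$.

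The entropy identity \eqref{eq:entropyreg} is where the real work lies, and this is the step I expect to be the main obstacle. Here the multiplier is $\phi_\eps'(n_\eps)$; after using \eqref{eq:CHRE1} and integrating by parts, the mobility cancels precisely because $\phi_\eps$ was chosen with $\phi_\eps''(n)=1/B_\eps(n)$, so $B_\eps(n_\eps)\phi_\eps''(n_\eps)=1$ and one is left with $-\int_0^{R_b}(r+\eps)\,\p_r n_\eps\,\p_r(\mu_\eps+V)\,\diff r$. The delicate part is expanding $\p_r\mu_\eps$ through \eqref{eq:CHRE2}: the pressure part $\p_r(n_\eps^\gamma)=\gamma n_\eps^{\gamma-1}\p_r n_\eps$ directly yields the term $\gamma(r+\eps)n_\eps^{\gamma-1}|\p_r n_\eps|^2$, while the surface-tension part requires integrating by parts against $w:=(r+\eps)\p_r n_\eps$ and using $w=0$ at the endpoints to rewrite the contribution as $\tfrac1\delta\int_0^{R_b}(r+\eps)\tilde\mu^2\,\diff r$ with $\tilde\mu=-\delta\big(\p_{rr}n_\eps+\tfrac{\p_r n_\eps}{r+\eps}\big)$. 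Expanding this square produces the two desired terms $\delta(r+\eps)|\p_{rr}n_\eps|^2$ and $\delta|\p_r n_\eps|^2/(r+\eps)$, together with a cross term $2\delta\,\p_r n_\eps\,\p_{rr}n_\eps=\delta\,\p_r(|\p_r n_\eps|^2)$ that integrates to zero by the boundary conditions.

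The confinement contribution $\int_0^{R_b}(r+\eps)\p_r n_\eps\,\p_r V\,\diff r$ is kept as is, and collecting everything gives \eqref{eq:entropyreg}. The care needed throughout is purely in tracking the radial weights $(r+\eps)$ and in checking that every boundary term genuinely cancels under \eqref{Neumann_regularized}; no estimate is required at this stage, only exact identities valid for the classical solution.
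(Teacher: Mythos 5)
Your proposal is correct and follows essentially the same route as the paper: mass by direct integration, energy by pairing the equation with $\mu_\eps+V$ (you merely differentiate $\mathcal{E}_\eps$ first and substitute afterwards, which is the same computation in reverse order), and entropy via the multiplier $\phi_\eps'(n_\eps)$ with the cancellation $B_\eps(n_\eps)\phi_\eps''(n_\eps)=1$, followed by the same two integrations by parts. Your packaging of the surface-tension term as $\frac{1}{\delta}\int_0^{R_b}(r+\eps)\tilde\mu^2\diff r$ is just a rewriting of the paper's expansion of $\frac{1}{r+\eps}\left(n_\eps'+(r+\eps)n_\eps''\right)^2$, including the identical treatment of the cross term $2\delta\int_0^{R_b} n_\eps'\,n_\eps''\diff r=0$.
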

\begin{remark}\label{rem:entropy}
The function $\phi_\eps$ is given by an explicit formula
\begin{equation}
\label{eq:entropycases}
\phi_{\eps}(x) = 
\begin{cases}
x \, (\log(\varepsilon) - 1) + 1 +x^2/(2\varepsilon) - \varepsilon/2 & \mbox{ for }\;  x \leq \varepsilon \\
x \, (\log(x) - 1) + 1 & \mbox{ for }\; \varepsilon < x.
\end{cases}
\end{equation}
With $\varepsilon < 1$, it enjoys three properties:
\begin{enumerate}
    \item $\phi_\eps(x) \to \phi(x):= x \, (\log(x) - 1) + 1$ for $x \geq 0$ as $\varepsilon \to 0$,
    \item $\phi_\eps(x) \geq 0$ for all $x \in \R$,
    \item $\phi_\eps'(x) \leq 0$ for $x \leq \varepsilon$,
    \item $\phi_\eps(x) \leq \phi(x) + 1 - \eps/2$ for $x \geq 0$.
\end{enumerate}
The first one is trivial. To see the second one, we observe that the function $x \mapsto x(\log(x) - 1) + 1$ is nonnegative, which implies $\phi_\eps(x) \geq 0$ for $x\geq \eps$. Then, for $x \leq \eps$ we discover
\begin{equation}\label{eq:derivative_of_entropy}
\phi_\eps'(x) = \log(\varepsilon) - 1 + \frac{x}{\varepsilon} \leq 0.
\end{equation}
As $\phi_\eps(\varepsilon)\geq 0$, this implies $\phi_{\eps}(x) \geq 0$ for all $x \in \R$. Then, \eqref{eq:derivative_of_entropy} also implies the third property while the forth follows by estimating $\phi_{\eps}(x) \leq \phi_\eps(0)$ for $x \leq \varepsilon$.
\end{remark}
\begin{proof}[Proof of Lemma \ref{lem:cons_mass_ene_ent}]
Mass conservation \eqref{eq:cons_mass} follows from integrating \eqref{eq:CHRE1} in space and using the boundary conditions~\eqref{Neumann_regularized}.

To see \eqref{eq:energyreg}, we multiply \eqref{eq:CHRE1} by $\mu_\eps + V$, integrate in space and use boundary conditions to obtain: 
$$
\int_0^{R_{b}} { (r+\eps) \, \p_t n_{\eps}} (\mu_\eps + V) \diff r + \int_0^{R_{b}} (r+\eps)B_{\eps}(n_{\eps}) \left|{\p_r(\mu_{\eps}+V)} \right|^2 \diff r=0.
$$
Using \eqref{eq:CHRE2} and integrating by parts, we obtain
$$
\int_0^{R_{b}} {(r+\eps)\, \p_t n_{\eps}} (\mu_\eps + V) \diff r = \frac{d}{dt}\mathcal{E}_{\eps}[n_\eps],
$$
which concludes the proof of \eqref{eq:energyreg}. 

To see \eqref{eq:entropyreg}, we multiply \eqref{eq:CHRE1} by $\phi'_\eps(n_\eps)$ and integrate in space to obtain
$$
\frac{d}{dt}\Phi_{\eps}[n_\eps] + \int_0^{R_{b}}  (r+\eps) \, n'_{\eps}(r) \, {\p_r(\mu_{\eps}+V)} \diff r = 0.
$$

In view of \eqref{eq:entropyreg}, it is sufficient to prove
$$
 \int_0^{R_{b}}  (r+\eps) \, n'_{\eps}(r) \, {\p_r \mu_{\eps}} \diff r =  \gamma \, \int_0^{R_{b}}  (r+\eps) \, |n'_{\eps}(r)|^2 \, n_{\eps}^{\gamma-1} \diff r + \delta\int_0^{R_{b}} \frac{|n'_\eps|^2}{r+\eps} + (r+\eps) |n''_\eps|^2 \diff r.
$$
We have
$$
\int_0^{R_{b}}  (r+\eps) \, n'_{\eps}(r) \, {\p_r \mu_{\eps}} \diff r  = \gamma \, \int_0^{R_{b}}  (r+\eps) \, |n'_{\eps}(r)|^2 \, n_{\eps}^{\gamma-1} \diff r - \delta\int_0^{R_{b}}  (r+\eps) \, n'_{\eps}(r) \p_r \left(\frac{1}{r+\eps} \p_r((r+\eps) \p_r n_{\eps}) \right) \diff r.
$$
In the second part, we can integrate by parts (using Neumann's boundary conditions) 
\begin{align*}
- \int_0^{R_{b}}  (r+\eps) \, &n'_{\eps}(r) \p_r \left(\frac{1}{r+\eps} \p_r((r+\eps) \p_r n_{\eps}) \right) \diff r\\ 
&=
 \int_0^{R_{b}}   n'_{\eps}(r) \frac{1}{r+\eps} \p_r((r+\eps) \p_r n_{\eps})  \diff r
 + \int_0^{R_{b}}  (r+\eps) \, n''_{\eps}(r) \frac{1}{r+\eps} \p_r((r+\eps) \p_r n_{\eps})  \diff r \\
 &=   \int_0^{R_{b}} \frac{|n'_\eps|^2}{r+\eps} + (r+\eps) |n''_\eps|^2 \diff r + 2 \int_0^{R_{b}} n'_{\eps}(r) \, n''_{\eps}(r) \diff r.
\end{align*}
The last term vanishes thanks to boundary conditions:
$$
2 \int_0^{R_{b}} n'_{\eps}(r) \, n''_{\eps}(r) \diff r = 
\int_0^{R_{b}} \p_r | n'_\eps(r)|^2 \diff r = 0
$$
and this concludes the proof.
\end{proof}

\noindent From Lemma \ref{lem:cons_mass_ene_ent}, we may deduce uniform bounds (in $\eps$) for the solutions $n_{\eps}$ as follows
\begin{proposition}
\label{prop:apriori}
Let $T>0$. The following sequences are uniformly bounded with respect to $\eps>0$:
\begin{enumerate}[label=(A\arabic*)]
\item \label{item_estim1} $\{\sqrt{r+\eps} \, \p_r n_\eps \}$ in $L^{\infty}((0,\infty); L^2(I_{R_{b}}))$,
\item \label{item_estim2} $\{\sqrt{r+\eps}\, n_{\eps} \}$ in $L^{\infty}((0,\infty) \times I_{R_{b}})$,
\item \label{item_estim3} $\{\sqrt{(r+\eps)\,B_{\eps}(n_{\eps})} \,\p_{r}(\mu_{\eps}+V)\}$ in $L^2((0,\infty)\times I_{R_{b}})$,
\item \label{item_estim4} $\{\sqrt{r+\eps}\, \p_{rr} n_{\eps}\}$ and $\left\{\frac{\p_{r} n_{\eps}}{\sqrt{r+\eps}}\right\}$ in $L^2((0,T)\times I_{R_{b}})$,
\item \label{item_estim4+1/2} $\{\Phi_\eps(n_\eps)\}$ in $L^\infty(0,T)$,
\item \label{item_estim5} $\{(r+\eps) \, \p_t n_\eps \}$ in $L^{2}((0,\infty);H^{-1}(I_{R_{b}}))$,
\end{enumerate}
where the estimates \ref{item_estim1}--\ref{item_estim3} and \ref{item_estim5} depend only on the initial energy $\mathcal{E}(n_0)$. Moreover, there is a constant $C$, independent of $\eps$, such that for all $r, r_1, r_2 \in[0,R_{b}]$, $t, t_1, t_2 \in [0,\infty)$
\begin{equation}\label{eq:Holder_in_space}
    |(r_1+\eps) n_{\eps}(r_1,t)-(r_2+\eps) n_{\eps}(r_2,t)|\le C|r_1-r_2|^{1/2},
\end{equation}
\begin{equation}\label{eq:Holder_in_time}
    |(r+\eps)(n_{\eps}(t_{2},r)-n_{\eps}(t_{1},r))|\le C|t_{2}-t_{1}|^{1/8}.
\end{equation}
In fact, the constant $C$ depends only on initial energy $\mathcal{E}(n_0)$.
\end{proposition}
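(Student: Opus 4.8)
The plan is to read off every bound from the three identities of Lemma~\ref{lem:cons_mass_ene_ent} after integrating them in time, the only preliminary being that the initial energy and entropy of the mollified data $n^{0}_{\eps}$ are bounded uniformly in $\eps$. For the energy this is immediate since $\mathcal{E}_{\eps}[n^{0}_{\eps}]\to\mathcal{E}[n_{0}]$; for the entropy it follows from the fourth property in Remark~\ref{rem:entropy}, giving $\phi_{\eps}(x)\le\phi(x)+1$ and hence $\Phi_{\eps}[n^{0}_{\eps}]\le\int_{0}^{R_{b}}(r+\eps)(\phi(n^{0}_{\eps})+1)\diff r\le C$. Integrating the energy dissipation \eqref{eq:energyreg} from $0$ to $\tau$ yields the time-uniform bound $\mathcal{E}_{\eps}[n_{\eps}(\tau)]+\int_{0}^{\tau}\int_{0}^{R_{b}}(r+\eps)B_{\eps}(n_{\eps})|\p_{r}(\mu_{\eps}+V)|^{2}\diff r\diff t\le\mathcal{E}_{\eps}[n^{0}_{\eps}]$: the dissipation term is exactly \ref{item_estim3}, and the surviving $\frac{\delta}{2}\int(r+\eps)|\p_{r}n_{\eps}|^{2}$ piece of $\mathcal{E}_{\eps}$ is \ref{item_estim1}. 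Here the confinement contribution $\int(r+\eps)n_{\eps}V$ is absorbed using $V\in L^{\infty}(I_{R_{b}})$, conservation of mass \eqref{eq:cons_mass} and the coercivity of the remaining terms, so that these bounds depend only on $\mathcal{E}(n_{0})$.

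Estimate \ref{item_estim2} then follows from \ref{item_estim1} together with the bound on $\int_{0}^{R_{b}}(r+\eps)n_{\eps}^{2}\diff r$ coming from the $n^{\gamma+1}$-term of the energy (since $\gamma+1>2$): writing $(r+\eps)n_{\eps}^{2}$ as the integral of its derivative $n_{\eps}^{2}+2(r+\eps)n_{\eps}\p_{r}n_{\eps}$ and using Cauchy--Schwarz with these two bounds gives a uniform $L^{\infty}$ bound on $(r+\eps)n_{\eps}^{2}$, i.e. on $\sqrt{r+\eps}\,n_{\eps}$. The entropy estimates \ref{item_estim4} and \ref{item_estim4+1/2} come from \eqref{eq:entropyreg} in the same way: after integrating in time, the cross term $\int(r+\eps)\p_{r}n_{\eps}\,\p_{r}V$ is absorbed into the good term $\delta|\p_{r}n_{\eps}|^{2}/(r+\eps)$ by Young's inequality, at the price of a remainder $\le C\,T$; since $\phi_{\eps}\ge0$ this leaves $\{\sqrt{r+\eps}\,\p_{rr}n_{\eps}\}$ and $\{\p_{r}n_{\eps}/\sqrt{r+\eps}\}$ bounded in $L^{2}((0,T)\times I_{R_{b}})$ and $\Phi_{\eps}[n_{\eps}]$ bounded in $L^{\infty}(0,T)$, now with a $T$-dependent constant.

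For \ref{item_estim5} I test the weak form of \eqref{eq:CHRE1} against $\varphi\in H^{1}(I_{R_{b}})$: after integration by parts $\langle(r+\eps)\p_{t}n_{\eps},\varphi\rangle=-\int_{0}^{R_{b}}(r+\eps)B_{\eps}(n_{\eps})\p_{r}(\mu_{\eps}+V)\,\p_{r}\varphi\diff r$, and Cauchy--Schwarz bounds this by $\big(\int(r+\eps)B_{\eps}(n_{\eps})|\p_{r}(\mu_{\eps}+V)|^{2}\big)^{1/2}\big(\sup(r+\eps)B_{\eps}(n_{\eps})\big)^{1/2}\|\p_{r}\varphi\|_{L^{2}}$. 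The factor $(r+\eps)B_{\eps}(n_{\eps})=(r+\eps)\max(\eps,n_{\eps})$ is in $L^{\infty}$ uniformly by \ref{item_estim2}; squaring, integrating in time and invoking \ref{item_estim3} gives \ref{item_estim5}, again depending only on $\mathcal{E}(n_{0})$. The spatial Hölder bound \eqref{eq:Holder_in_space} for $u_{\eps}:=(r+\eps)n_{\eps}$ follows from $u_{\eps}(r_{1})-u_{\eps}(r_{2})=\int_{r_{2}}^{r_{1}}\big(n_{\eps}+(s+\eps)\p_{s}n_{\eps}\big)\diff s$: the second term is $\le\big(\int_{r_{2}}^{r_{1}}(s+\eps)\diff s\big)^{1/2}\big(\int(s+\eps)|\p_{s}n_{\eps}|^{2}\big)^{1/2}\le C|r_{1}-r_{2}|^{1/2}$ by \ref{item_estim1}, while for the first one \ref{item_estim2} gives $|n_{\eps}(s)|\le C/\sqrt{s+\eps}$, hence $\int_{r_{2}}^{r_{1}}|n_{\eps}|\diff s\le C(\sqrt{r_{1}+\eps}-\sqrt{r_{2}+\eps})\le C|r_{1}-r_{2}|^{1/2}$.

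The last and most delicate point is the temporal Hölder bound \eqref{eq:Holder_in_time}, which I expect to be the main obstacle since it requires trading the weak $H^{-1}$-regularity in time against the spatial regularity. From \ref{item_estim5} and Cauchy--Schwarz in time, $\|u_{\eps}(t_{2},\cdot)-u_{\eps}(t_{1},\cdot)\|_{H^{-1}}\le|t_{2}-t_{1}|^{1/2}\|(r+\eps)\p_{t}n_{\eps}\|_{L^{2}(H^{-1})}\le C|t_{2}-t_{1}|^{1/2}$. The increment $g:=u_{\eps}(t_{2},\cdot)-u_{\eps}(t_{1},\cdot)$ is uniformly bounded in $C^{0,1/2}(I_{R_{b}})$ by \eqref{eq:Holder_in_space}, and for such functions one has the one-dimensional interpolation inequality $\|g\|_{L^{\infty}}\le C\,\|g\|_{H^{-1}}^{1/4}$, proved by testing $g$ against a tent function supported on an interval of width comparable to $\|g\|_{L^{\infty}}^{2}$ (the Hölder bound forces $g$ to keep half its peak value there, and $\|g\|_{H^{-1}}$ controls the resulting pairing). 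Combining the two estimates gives $\|g\|_{L^{\infty}}\le C|t_{2}-t_{1}|^{1/8}$, which is \eqref{eq:Holder_in_time}. Besides this interpolation, the point requiring the most care is the confinement potential: keeping the constants in \ref{item_estim1}--\ref{item_estim3} and \eqref{eq:Holder_in_space}--\eqref{eq:Holder_in_time} genuinely $T$-independent (depending only on $\mathcal{E}(n_{0})$) forces the $V$-terms to be absorbed by the coercive terms of the energy rather than by the entropy, whose bound \ref{item_estim4+1/2} carries the $T$-dependent remainder.
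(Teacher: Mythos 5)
Your derivation of \ref{item_estim2} has a genuine gap, and it sits at the base of your whole chain of estimates. The derivative of $(r+\eps)n_\eps^2$ is $n_\eps^2+2(r+\eps)n_\eps\p_r n_\eps$; Cauchy--Schwarz together with \ref{item_estim1} and the energy bound $\int_0^{R_b}(r+\eps)n_\eps^2\,\diff r\le C$ does control the cross term, but it does \emph{not} control the unweighted term $\int n_\eps^2\,\diff r$: both of your bounds carry the degenerate weight $(r+\eps)$, and removing that weight costs a factor $\int_0^{R_b}(r+\eps)^{-1}\diff r\sim\log(1/\eps)$. This is not a technicality. Estimate \ref{item_estim2} is precisely the assertion that $n_\eps$ may be as singular as $(r+\eps)^{-1/2}$ at the origin, and for such profiles $\int n_\eps^2\,\diff r$ is exactly the borderline quantity; any proof of the unweighted bound already contains the pointwise analysis you are trying to shortcut, so invoking it via ``Cauchy--Schwarz with these two bounds'' is circular as presented. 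The paper avoids this by a different computation based on mass conservation \eqref{eq:cons_mass}: one writes $\tfrac{R_b^2+2\eps R_b}{2}\,n_\eps(t,\rho)-\int_0^{R_b}(z+\eps)n_\eps(t,z)\diff z$ as a double integral of $\p_r n_\eps$, splits it at $\rho$, and --- the key trick --- multiplies by $(\rho+\eps)^{1/2}$ and slides this factor \emph{inside} the tail integral $\int_\rho^{R_b}$, where $(\rho+\eps)^{1/2}\le(r+\eps)^{1/2}$, before applying Cauchy--Schwarz, so that no unweighted term ever appears. (Alternatively, your route can be repaired by first proving the pointwise bound $|n_\eps(\rho)|\le|n_\eps(s_0)|+\bigl(\log\tfrac{s_0+\eps}{\rho+\eps}\bigr)^{1/2}\,\bigl\|\sqrt{r+\eps}\,\p_r n_\eps\bigr\|_{L^2}$ for a mean-value point $s_0$, and then multiplying by $(\rho+\eps)^{1/2}$.) Since in your scheme \ref{item_estim5}, both Hölder estimates, and the entropy step all rely on \ref{item_estim2}, the gap propagates to everything downstream.

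The remaining steps are correct, and two of them genuinely differ from the paper in ways worth noting. For \ref{item_estim4}--\ref{item_estim4+1/2} you absorb the sign-indefinite term $\int(r+\eps)\p_r n_\eps\,\p_r V$ into $\delta|\p_r n_\eps|^2/(r+\eps)$ by Young's inequality, while the paper bounds it using \ref{item_estim1} and $\|\p_r V\|_\infty$; both yield the same $T$-dependent remainder, and in both treatments only these two items pick up $T$-dependence, as the statement requires. More interestingly, your proof of \eqref{eq:Holder_in_time} replaces the paper's argument --- mollify the equation in space with $\eta_\nu$, bound the flux term by $C|t_2-t_1|^{1/2}\nu^{-3/2}$ via Young's convolution inequality and \ref{item_estim2}--\ref{item_estim3}, compare mollified and unmollified functions at cost $C\nu^{1/2}$ using \eqref{eq:Holder_in_space}, and optimize $\nu=|t_2-t_1|^{1/4}$ --- by the $H^{-1}$ bound $\|u_\eps(t_2,\cdot)-u_\eps(t_1,\cdot)\|_{H^{-1}}\le C|t_2-t_1|^{1/2}$ coming from \ref{item_estim5}, combined with the interpolation inequality $\|g\|_{L^\infty}\le C\,\|g\|_{H^{-1}}^{1/4}$ for functions with uniformly bounded $C^{0,1/2}$ seminorm. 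Your tent-function proof of that inequality is valid (the test interval has length comparable to $(\|g\|_\infty/[g]_{C^{0,1/2}})^2$, with a harmless one-sided modification when the maximum lies near an endpoint, and the constant depends only on the Hölder seminorm, which \eqref{eq:Holder_in_space} makes uniform), and it recovers the same exponent $1/8$ while cleanly separating the PDE input from a purely functional-analytic step; this is an attractive alternative to the paper's mollification argument, provided the gap in \ref{item_estim2} is first closed.
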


\begin{proof}[Proof of Proposition \ref{prop:apriori}]

We divide the reasoning into a few steps.

\noindent \textit{Step 1: Estimates \ref{item_estim1}--\ref{item_estim2}}. First, from \eqref{eq:energyreg} we deduce \ref{item_estim1}. For estimate~\ref{item_estim2} we adapt the method from~\cite{Yin2001}. For any $\rho\in(0,R_{b})$,
\begin{align*}
&\frac{R_{b}^{2}+2\eps R_{b}}{2}n_{\eps}(t,\rho)-\int_{0}^{R_{b}}(z+\eps) \,n_{\eps}(t,z) \diff z=\int_{0}^{R_{b}}(z+\eps)\, [n(t,\rho)-n(t,z)] \diff z\\
&=\int_{0}^{R_{b}}\int_{z}^{\rho}(z+\eps) \, \p_{r}n_{\eps}(t,r)\diff r \diff z\\
&=\int_{0}^{\rho}\int_{z}^{\rho}(z+\eps) \, \p_{r}n_{\eps}(t,r)\diff r \diff z+\int_{\rho}^{R_{b}}\int_{z}^{\rho}(z+\eps) \, \p_{r}n_{\eps}(t,r) \diff r \diff z \\
&=\int_{0}^{\rho}\int_{0}^{r}(z+\eps) \, \p_{r}n_{\eps}(t,r) \diff z \diff r+\int_{\rho}^{R_{b}}\int_{r}^{R_{b}}(z+\eps) \, \p_{r}n_{\eps}(t,r) \diff z \diff r\\
&=\int_{0}^{\rho}\left(\f{r^{2}}{2}+\eps\, r\right)\p_{r}n_{\eps}(t,r)\diff r+\int_{\rho}^{R_{b}}\left[\f{1}{2}(R_{b}^{2}-r^{2})+\eps(R_{b}-r)\right]\p_{r}n_{\eps}(t,r) \diff r\\
&\le R_{b}\int_{0}^{\rho}(r+\eps)|\p_{r}n_{\eps}(t,r)| \diff r+2R_{b}^{2}\int_{\rho}^{R_{b}}|\p_{r}n_{\eps}(r,t)| \diff r.
\end{align*}
Multiplying the previous inequality by $2(\rho+\eps)^{1/2}$ yields 
\begin{align*}
&\left|(R_{b}^{2}+2\eps R_{b})(\rho+\eps)^{1/2}n_{\eps}(t,\rho)-2(\rho+\eps)^{1/2}\int_{0}^{R_{b}}(z+\eps)n_{\eps}(t,z) \diff z\right| \leq \\
& \qquad \qquad \le 2(\rho+\eps)^{1/2}R_{b}\int_{0}^{\rho}(r+\eps)|\p_{r}n_{\eps}(t,r)| \diff r+4R_{b}^{2}\int_{\rho}^{R_{b}}(r+\eps)^{1/2}|\p_{r}n_{\eps}(t,r)| \diff r \\
& \qquad \qquad \le C(R_{b})\left(\int_{0}^{R_{b}}(r+\eps)\left|\p_{r}n_{\eps}(t,r)\right|^{2} \diff r\right)^{1/2}.
\end{align*}

\noindent Thanks to the conservation of mass \eqref{eq:cons_mass}, we obtain \ref{item_estim2}.\\

\noindent \textit{Step 2: Estimates \ref{item_estim3}--\ref{item_estim4+1/2}.} The bound \ref{item_estim3} follows from the conservation of energy \eqref{eq:energyreg}. To see \ref{item_estim4} and \ref{item_estim4+1/2}, we want to use the conservation of entropy \eqref{eq:entropyreg}, but this has to be done carefully, as the term $(r+\eps)\, \p_r n_\eps \, \p_r V$ can be negative. Therefore, we fix $T>0$, consider $\phi_\eps$ as in Remark \ref{rem:entropy} and integrate \eqref{eq:entropyreg} on $(0,T)$ to deduce
$$
\Phi_{\eps}(n_\eps(T,\cdot)) +\delta \int_0^T \int_0^{R_{b}} (r+\eps) |\p_{rr}n_\eps|^{2}+\f{|\p_{r}n_\eps|^{2}}{r+\eps} \diff r \diff t \leq \Phi_{\eps}(n_0) + \int_0^T \int_0^{R_{b}} (r+\eps)\, \p_r n_\eps \, \p_r V \diff r \diff t.
$$
The last term can be easily bounded (using estimates \ref{item_estim1}-\ref{item_estim2}) by a constant depending on $T$. The conclusion follows from $\Phi_{\eps}(n_{\eps}(T,\cdot)) \geq 0$ and $\Phi_{\eps}(n_0)$ can be bounded in terms of $\Phi(n_0)$, cf. (4) in Remark \ref{rem:entropy}.\\

\noindent \textit{Step 3: Estimate \ref{item_estim5}.} Let $\chi\in L^{2}(0,T;H^{1}(I_{R_{b}}))$. We multiply \eqref{eq:CHRE1} by $\chi$ and integrate with respect to $r$ between 0 and $R_{b}$. Using an integration by parts and Neumann boundary conditions, we obtain

\begin{align*}
    \int_{0}^\infty \int_0^{R_{b}} (r+\eps)\p_{t}n_{\eps}&\chi \diff r \diff t =-\int_{0}^{\infty} \int_0^{R_{b}} (r+\eps)B_{\eps}(n_{\eps})\p_{r}(\mu_{\eps}+V)\p_{r}\chi \diff r \diff t \\
    &=-\int_{0}^\infty \int_0^{R_{b}} \sqrt{(r+\eps)\, B_{\eps}(n_{\eps})}\, \sqrt{(r+\eps)\, B_{\eps}(n_{\eps})} \, \p_{r}(\mu_{\eps}+V) \,\p_{r}\chi \diff r \diff t \\
    &\le \norm{\sqrt{(r+\eps)\, B_{\eps}(n_{\eps})}}_{\infty}\, \norm{\sqrt{(r+\eps)\, B_{\eps}(n_{\eps})}\,\p_{r}(\mu_{\eps}+V)}_{2} \, \norm{\p_{r}\chi}_{2}.\phantom{\int_{0}^T}
\end{align*}
where the norms are taken over $(0,\infty)\times I_{R_{b}}$. The conclusion follows.\\

\noindent \textit{Step 4: Hölder estimate in space \eqref{eq:Holder_in_space}.} By differentiation
$$
(r_2 + \varepsilon) \, n(t,r_2) - (r_1 + \varepsilon) \, n(t,r_1) = \int_{r_1}^{r_2} (r+\varepsilon) \, \p_r n(t,r) \diff r + \int_{r_1}^{r_2}   n(t,r) \diff r.
$$
For the first term, we have
$$
\int_{r_1}^{r_2} (r+\varepsilon) \, \p_r n_\eps(t,r) \diff r \leq 
\left(\int_{r_1}^{r_2} (r+\varepsilon) \diff r \right)^{1/2} \,
\|  \sqrt{r+\varepsilon} \, \p_r n_\eps(t,\cdot) \|_2 \leq C \, |r_1 - r_2|^{1/2}
$$
due to \ref{item_estim1}. For the second term, we compute, using \ref{item_estim2},
\begin{multline*}
\int_{r_1}^{r_2} n_\eps(t,r) \diff r = \int_{r_1}^{r_2} n_\eps(t,r) \frac{\sqrt{r+\eps}}{\sqrt{r+\eps}} \diff r \leq  \|\sqrt{r+\eps}\, n_\eps  \|_{\infty} \int_{r_1}^{r_2} \frac{1}{\sqrt{r+\eps}} \diff r \leq \\ \leq C\, |\sqrt{r_2 + \eps} - \sqrt{r_1 + \eps}| \leq C \, |r_1 - r_2|^{1/2}.
\end{multline*}

\noindent \textit{Step 5: Hölder estimate in time \eqref{eq:Holder_in_time}.} The idea is to deduce the regularity in time from the regularity in space.
We extend the function $r \mapsto n_\eps\,r$ for $r < 0$ with a constant to preserve continuity. We consider $\eta_{\nu}$ to be a usual one-dimensional mollifier in the spatial variable $r$ where $\nu$ will be chosen later in terms of $|t_2 - t_1|$. Mollifying \eqref{eq:CHRE1} with $\eta_{\nu}$ and integrating in time (from $t_1$ to $t_2$) we obtain
\begin{equation}\label{eq:mollified_in_space_eq}
((r+\eps)n_{\eps})\ast \eta_{\nu}(t_2,r) - ((r+\eps)n_{\eps})\ast \eta_{\nu}(t_1,r)   = \int_{t_1}^{t_2} \p_r \eta_{\nu} \ast \left((r+\eps)B_{\eps}(n_{\eps}) \, {\p_r(\mu_{\eps}+V)} \right) \diff t.
\end{equation}
First, we estimate (RHS). We notice that Young's convolutional inequality and Hölder's inequality are implying for fixed $t \in [t_1, t_2]$
\begin{multline*}
\| \p_r \eta_{\nu} \ast \left((r+\eps)B_{\eps}(n_{\eps}) \, {\p_r(\mu_{\eps}+V)} \right) \|_{\infty} \leq \\ \leq
\|\p_r \eta_{\nu} \|_{2} \, \| \sqrt{(r+\eps)B_{\eps}(n_{\eps})} \|_{\infty} \, 
\| \sqrt{(r+\eps)B_{\eps}(n_{\eps})} \, {\p_r(\mu_{\eps}+V)}  \|_{2}.
\end{multline*}
By the definition of a mollifier,
$$
\|\p_r \eta_{\nu} \|_{2} = \frac{1}{\nu^2} \left|\int_{\mathbb{R}} (\eta')^2\left(\frac{r}{\nu} \right) \diff r \right|^{1/2} \leq \frac{C}{\nu^{3/2}}.
$$
Therefore, applying \ref{item_estim2}, \ref{item_estim3} and Hölder's inequality in time, we deduce
\begin{equation}\label{eq:term_with_der_r_mollified_est}
\int_{t_1}^{t_2} \| \p_r \eta_{\nu} \ast \left((r+\eps)B_{\eps}(n_{\eps}) \, {\p_r(\mu_{\eps}+V)} \right) \|_{\infty} \diff t \leq C \, \frac{|t_2-t_1|^{1/2}}{\nu^{3/2}}.
\end{equation}
To conclude the proof, we need to estimate \eqref{eq:mollified_in_space_eq} using \eqref{eq:Holder_in_space} from Step 4, we get
\begin{equation}\label{eq:correction_moll_nomoll}
\begin{split}
|((r+\eps)n_{\eps})\ast \eta_{\nu}(t_1,r) -& 
((r+\eps)n_{\eps})(t_1,r)|
\leq
\\
&\leq
\int_{\R} |(r+y+\eps)n_{\eps}(t_1,r+y) - (r+\eps) n_{\eps}(t_1,r) | \eta_{\nu}(y) \diff y 
\\
&\leq C\int_{\R} |y|^{1/2}\, \eta_{\nu}(y) \diff y \leq C\,\nu^{1/2},
\end{split}
\end{equation}
where we used that on the support of $\eta_{\nu}$ we have $|y|\leq \nu$. Exactly the same estimate holds if we replace $t_1$ with $t_2$. Combining \eqref{eq:mollified_in_space_eq}, \eqref{eq:term_with_der_r_mollified_est} and \eqref{eq:correction_moll_nomoll} we obtain
$$
|(r+\eps) n_{\eps}(t_2,r) - (r+\eps) n_{\eps}(t_1,r)| \leq C\, \frac{|t_2 - t_1|^{1/2}}{\nu^{3/2}} + C\, \nu^{1/2}.
$$
We choose $\nu = |t_2 - t_1|^{1/4}$ and this concludes the proof.
\end{proof}

\begin{remark}\label{rem:Linf_est_sqrtrn}
In the above proof, Step 1 shows more generally that when $n(t,r):[0,\infty) \times [0,R_b] \to \R$ satisfies $\int_{0}^{R_b} r\,n(t,r) \diff r = m$ and $\sqrt{r} \, \p_r n \in L^{\infty}(0,T; L^2(0, R_b))$ then
$$
\left|\sqrt{r} \, n(t,r)\right| \leq C\left(R_b, m, 
\|\sqrt{r} \, \p_r n \|_{L^{\infty}(0,T; L^2(0, R_b))}  \right).
$$
\end{remark}

\subsection{Proof of Theorem~\ref{thm:finalthm1} (existence part)}\label{subsect:eps_to_zero}

We are concerned with the first part of Theorem~\ref{thm:finalthm1} \ie the convergence $\eps\to 0$ of the approximation scheme. 

\begin{proof}[Proof of Theorem~\ref{thm:finalthm1} (existence)]
The proof is divided into several steps.\\

\noindent {\it Step 1: Compactness}. By the estimates in Proposition \ref{prop:apriori}, the Banach-Alaoglu and Arzela-Ascoli theorems, we can extract a subsequence such that, for some $\xi \in L^2((0,T)\times I_{R_b})$,
\begin{enumerate}[label=(C\arabic*)]
    \item \label{item_conv_1}$(r+\varepsilon)\,n_\eps \to r\,n$ uniformly in $C([0,T]\times I_{R_b})$,
    \item $(r+\eps) \, \p_t n_\eps \rightharpoonup r\, \p_t n$ in $L^2(0,T; H^{-1}(I_{R_b}))$,
    \item\label{item_conv_3} $\sqrt{(r+\varepsilon)\, B_\eps(n_\eps)} \p_r(\mu_\eps + V) \rightharpoonup \xi \mbox{ in } L^2((0,T)\times I_{R_b})$,
    \item \label{item_conv4}$\sqrt{r+\eps}\, \p_{rr} n_{\eps} \rightharpoonup \sqrt{r}\, \p_{rr} n$ and $\frac{\p_{r} n_{\eps}}{\sqrt{r+\eps}} \rightharpoonup \frac{\p_{r} n}{\sqrt{r}}$ in $L^2((0,T)\times I_{R_b})$.
\end{enumerate}

\noindent \textit{Step 2: Nonnegativity of $n$.} The plan is to obtain a contradiction with the uniform estimate of the entropy. For $\alpha >0$, we define the sets
\begin{equation*}
  V_{\alpha,\eps}=\{(t,r)\in (0,T)\times I_{R_{b}}: \,  n_{\eps}(t,r)\le-\alpha, \, r \geq \alpha \},
\end{equation*}
\begin{equation*}
  V_{\alpha,0}=\{(t,r)\in (0,T)\times I_{R_{b}}: \,  n(t,r)\le-\alpha, \, r \geq \alpha \}.
\end{equation*}
By Remark \ref{rem:entropy} (nonnegativity of $\phi_\eps$) and \ref{item_estim4+1/2} in Lemma \ref{prop:apriori}  there is a constant such that
\begin{equation*}
 \int_{V_{\alpha,\eps}} (r+\eps)\, \phi_\eps(n_\eps) \diff r \diff t \leq   \int_{(0,T)\times I_{R_{b}}} (r+\eps)\, \phi_\eps(n_\eps) \diff r \diff t \le C(T).
\end{equation*}
For $n_\eps \leq -\alpha$, we have $0 \leq \phi_\eps(-\alpha) \leq \phi_\eps(n_\eps)$ ((3) in Remark \ref{rem:entropy}) so that
$$
\left(-\alpha (\log(\eps) - 1) + 1 + \alpha^2/(2\varepsilon) - \eps/2 \right)\, \int_{V_{\alpha,\varepsilon}} (r+\varepsilon) \diff r \diff t\leq C(T).
$$
Sending $\varepsilon \to 0$ and using uniform convergence of $n_{\eps} \to n$ for $r \geq \alpha > 0$ we discover that
$$
\int_{V_{\alpha, 0}} r \diff r \diff t = \lim_{\eps \to 0} \int_{V_{\alpha, \eps}} (r+\eps) \diff r \diff t = 0
$$
using, from measure theory, that on a measure space $(X,\mu)$ if $f_n, f: X \to \R$ and $f_n \to f$ in $L^1(X,\mu)$ then for $\alpha \in \R$ we have $\int_{f_n < \alpha} \diff \mu \to \int_{f < \alpha} \diff \mu$ as $n \to \infty$. This means that $V_{\alpha,0}$ is a null set for each $\alpha > 0$, concluding the proof.\\

\noindent \textit{Step 3: Identification of the limit $(r+\eps)B_{\eps}(n_{\eps})\p_{r}(\mu_{\eps}+V)$.} The last difficulty is to pass to the limit in $\int_{0}^{T}\int_{0}^{R_{b}}(r+\eps)B_{\eps}(n_{\eps})\p_{r}(\mu_{\eps}+V)\p_{r}\varphi\diff r\diff t$. Indeed, since the mobility is degenerate it is not clear that we can identify the derivative of the potential $\p_{r}\mu$ in the limit. However, due to the uniform convergence of $(r+\eps)n_{\eps}$ and the nonnegativity of $n$ we can conclude. By \ref{item_conv_3} and the uniform convergence of $\sqrt{(r+\varepsilon)\, B_\eps(n_\eps)}$, we have
\begin{equation}\label{eq:weak_conv_dr_mu_times_nr}
(r+\varepsilon)\, B_\eps(n_\eps) \p_r(\mu_\eps + V) \rightharpoonup \sqrt{r\,n}\,\xi = \begin{cases}
\sqrt{r\,n}\,\xi &\mbox{ when } rn > 0\\
0 &\mbox{ when } rn = 0
\end{cases} \mbox{ in } L^2((0,T)\times I_{R_b}).
\end{equation}
We first claim that
\begin{equation}\label{eq:xi_char}
\xi(t,r) = 
\sqrt{r n} \, \p_r(\mu+V)  \mbox{ when } rn > 0.
\end{equation}
We introduce the family of open sets
$$
\{(t,r): r\,n(t,r) > 0 \} = \cup_{\nu > 0} P_{\nu}, \qquad P_{\nu}=\{(t,r): rn(r,t)>\nu,\,  r>\nu\},
$$
so that it is sufficient to identify the limit $\xi$ in $P_\nu$ for fixed $\nu>0$.\\ 

\noindent Because of the uniform convergence we know that for every $\eps<\eps(\nu)$ for $\eps(\nu)$ small enough, 

\begin{equation*}
    (r+\eps)B_{\eps}(n_{\eps}(r,t))\ge \f{\nu}{2}, \quad (r,t)\in P_{\nu}. 
\end{equation*}
Therefore, the estimate~\ref{item_estim3} implies
\begin{equation*}
    \norm{\p_{r}(\mu_{\eps}+V)}_{L^{2}(P_{\nu})}\le\f{C}{\nu^{1/2}}.
\end{equation*}
As $\p_rV$ is uniformly bounded, we deduce that

\begin{equation*}
    \norm{\p_{r}\mu_{\eps}}_{L^{2}(P_{\nu})}\le\f{C}{\nu^{1/2}}. 
\end{equation*}
By definition of $\mu_\eps$
$$
\p_{r}\mu_{\eps}=\gamma n^{\gamma-1}_{\eps}\p_{r}n_{\eps}-\delta\p_{r}\left(\f{1}{r+\eps}\p_{r}((r+\eps)\p_{r}n_{\eps})\right).
$$
For the first term of (RHS), we use the strong convergence~\ref{item_conv_1} that yields a uniform convergence of $n_{\eps}$ in the zone $\{(r,t): r>\nu\}$. Then, because $P_{\nu}\subset\{(r,t): r>\nu\}$ we obtain   

\begin{equation*}
\gamma n^{\gamma-1}_{\eps}\to \gamma n^{\gamma-1}\quad \text{uniformly in $L^{\infty}(P_{\nu})$}.    
\end{equation*}
Combined with the weak convergence provided by estimate~\ref{item_estim1} in $P_{\nu}$ we obtain that up to a subsequence,

\begin{equation*}
\gamma n^{\gamma-1}_{\eps}\p_{r}n_{\eps}\rightharpoonup \gamma n^{\gamma-1}\p_{r}n\quad\text{weakly in $L^{2}(P_{\nu})$}.    
\end{equation*}

\noindent Then we combine the $L^{2}(P_{\nu})$ bound on $\gamma n^{\gamma-1}_{\eps}\p_{r}n_{\eps}$ with the $L^{2}(P_{\nu})$ estimate of $\p_{r}\mu_{\eps}$.  We obtain an $L^{2}(P_{\nu})$ bound on the second term on the right-hand side. Together with estimates~\ref{item_estim1}-\ref{item_estim2}-\ref{item_estim3}-\ref{item_estim4} we obtain the weak convergence up to a subsequence
\begin{equation*}
 \p_{r}\left(\f{1}{r+\eps}\p_{r}((r+\eps)\p_{r}n_{\eps})\right)\rightharpoonup \p_{r}\left(\f{1}{r}\p_{r}(r\p_{r}n)\right)\quad\text{weakly in $L^{2}(P_{\nu})$}. 
\end{equation*}
Finally, we obtain
\begin{equation}
\label{eq:weakconvP2}
\p_{r}\mu_{\eps}\rightharpoonup \p_{r}\mu=\gamma n^{\gamma-1}\p_{r}n-\delta\p_{r}\left(\f{1}{r}\p_{r}(r\p_{r}n)\right)\quad \text{weakly in $L^{2}(P_{\nu})$}.    
\end{equation}
Using uniform convergence, we conclude the proof of \eqref{eq:xi_char}. Finally, \eqref{eq:weak_conv_dr_mu_times_nr} and \eqref{eq:xi_char} implies
$$
\int_{0}^{T}\int_{0}^{R_{b}}(r+\eps)B_{\eps}(n_{\eps})\p_{r}(\mu_{\eps}+V)\p_{r}\varphi\diff r\diff t \to \int_{rn>0}rn\p_{r}(\mu+V)\p_{r}\varphi\diff r\diff t.
$$

\noindent {\it Step 4: existence of a weak solution.} Steps 1-3 show that $n$ satisfies the condition of Definition \ref{def:weak}.
\\

\noindent {\it Step 5: Properties 
\eqref{eq:cons_mass_limit}--\eqref{eq:Holder_in_time_limit} from Remark \ref{rem:more_prop_weak_sol}.} First, properties \eqref{eq:cons_mass_limit}, \eqref{eq:Holder_in_space_limit} and \eqref{eq:Holder_in_time_limit} follow from uniform convergence \ref{item_conv_1} and estimates \eqref{eq:Holder_in_space}-\eqref{eq:Holder_in_time} for $r\, n_\eps$. To see \eqref{eq:energyreg_limit}, we notice that weak lower semicontinuity of $L^2$ norm implies
$$
\mathcal{E}[n(\tau,\cdot)]+\int_0^\tau\int_{0}^{R_{b}} |\xi(t,r)|^2 \diff r \diff t \leq \mathcal{E}[n_0].
$$
By \eqref{eq:xi_char}, the integral on the (LHS) can be estimated from below by 
$$
\mathcal{E}[n(\tau,\cdot)]+\int_0^\tau\int_{0}^{R_{b}} \mathds{1}_{rn>0}\, rn \left|{\p_r (\mu+V)}\right|^{2} \diff r \diff t \leq \mathcal{E}[n_0].
$$
To see \eqref{eq:entropyreg_limit}, it is sufficient to prove
$$
\Phi(n(t,\cdot)) \leq \liminf_{\varepsilon \to 0} \Phi_\eps(n_\eps(t,\cdot)).
$$
Let $\delta > 0$. By nonnegativity of $\phi_{\varepsilon}$ we estimate
$$
\liminf_{\varepsilon \to 0} \Phi_\eps(n_\eps(t,\cdot)) \geq \liminf_{\varepsilon \to 0} \int_{r \geq \delta} \phi_\eps(n_\eps) (r+\eps) \diff r = \int_{r \geq \delta} \phi(n) \, r \diff r,
$$
because on the set $\{r \geq \delta\}$, we have uniform convergence $n_\eps \to n$. As $\phi(n) \geq 0$, we can send $\delta \to 0$ by monotone convergence and conclude the proof.
\\ 

\noindent {\it Step 6: Neumann boundary condition $n'(t,R_{b})=0$.} First, if $\varphi, \phi \in C^1[a,b]\cap H^2(a,b)$ we have (via approximation)
\begin{equation}\label{eq:integration_by_parts_smooth}
\int_{a}^b \varphi'(r) \, \phi'(r) + \varphi''(r) \, \phi(r) \diff r = \varphi'(b) \, \phi(b) - \varphi'(a)\, \phi(a).
\end{equation} 
Let $\phi$ be a smooth function with $\phi(R_0) = 0$ and $\phi(R_b) = 1$ for some $R_0 \in (0,R_b)$. We know from estimates~\ref{item_conv_1}-\ref{item_conv4} that $n\in L^{2}(0,T;H^{2}(R_0, R_b))$. Let $t$ be such that $r \mapsto n(t,r) \in H^2(R_0, R_b)$. Applying ~\eqref{eq:integration_by_parts_smooth} with $\varphi(r) = n_\eps(t,r)$, we deduce, thanks to the Neumann boundary condition $n_\eps'(t,R_b) = 0$, that
$$
\int_{R_0}^{R_b} n_\eps'(t,r) \, \phi'(r) + n_\eps''(t,r) \, \phi(r) \diff r = 0.
$$
Multiplying by a smooth test function $\eta(t)$, we have
$$
\int_0^T \int_{R_0}^{R_b} \eta(t) \left( n_\eps'(t,r) \, \phi'(r) + n_\eps''(t,r) \, \phi(r)\right) \diff  r \diff t = 0.
$$
Passing to the weak limit $\eps \to 0$ and using that $\eta$ is arbitrary we conclude
$$
\int_{R_0}^{R_{b}} n'(t,r) \, \phi'(r) + n''(t,r) \, \phi(r) \diff r = 0, \quad \text{for a.e. } t>0.
$$
 As $n(t,\cdot) \in H^2(R_0,R_b)$, we can apply~\eqref{eq:integration_by_parts_smooth} again and deduce
$$
\int_{R_0}^{R_{b}} n'(t,r) \, \phi'(r) + n''(t,r) \, \phi(r) \diff r = n'(t,R_b),
$$
which finally proves $n'(t,R_b) = 0$.
\end{proof}

\subsection{Proof of Theorem~\ref{thm:finalthm1} (Long term asymptotics)}

With global solutions at hand, we can study the long term behaviour. For that purpose, we fix $k,T$, $k\ge T$ and define $n_{k}(t,x)=n(t+k,x)$, $\mu_{k}(t,x)=\mu(t+k,x)$. Consider the solution $n$ in the interval $(-T+k, T+k)$, it satisfies 
\begin{equation*}
\int_{-T+k}^{T+k}r\langle\p_{t}n,\varphi\rangle_{H^{-1},H^{1}} \diff t +\int_{-T+k}^{T+k} \int_{0}^{R_{b}}\mathds{1}_{rn>0}\, r\, n\, \p_{r}(\mu+V)\p_{r}\varphi \diff r \diff t=0,
\end{equation*} 
and a change of variables yields 
\begin{equation}
\label{eq:solnk}
\int_{-T}^{T}r\langle\p_{t}n_{k},\varphi\rangle_{H^{-1},H^{1}} \diff t +\int_{-T}^{T} \int_{0}^{R_{b}}\mathds{1}_{rn_{k}>0}\, r\, n_{k}\, \p_{r}(\mu_{k}+V)\p_{r}\varphi \diff r \diff t=0.
\end{equation} 
We also recall the Neumann boundary condition $n_{k}'(t,R_{b})=0$ and the conservation of mass $\int_{0}^{R_{b}}rn_{k}\diff r=\int_{0}^{R_{b}}rn_{0}\diff r$. We want to pass to the limit $k\to\infty$ in this equation and prove the

\begin{proposition}
Let $(n,\mu)$ be a weak solution of~\eqref{eq:CHR1}-\eqref{eq:CHR2}. Then, we can extract a subsequence, still denoted by the index $k$, of $(n_{k},\mu_{k})$ such that $\sqrt{r}n_{k}\to \sqrt{r}n_{\infty}$ strongly in $L^{\infty}((-T,T) \times I_{R_{b}})$ and $\sqrt{rn_{k}}\p_{r}(\mu_{k}+V)\rightharpoonup\sqrt{rn}\p_{r}(\mu_{\infty}+V)$ weakly in $L^{2}((-T,T) \times I_{R_{b}}\setminus\{rn=0\})$. We have $n_{\infty}\in C^{1}(\R \times \overline{B}_{R_{b}})$ and the relations
\\
\begin{equation}
\label{eq:stationarystates1}
    rn_{\infty}\p_{r}(\mu_{\infty}+V)=0, \qquad \mu_\infty= n^{\gamma}_\infty - \f{\delta}{r}\p_{r}(r\p_{r}n_\infty),
\end{equation}
with the Neumann boundary conditions
\begin{equation*}
\f{\p n_{\infty}}{\p r}\Big|_{r=0}=\f{\p n_{\infty}}{\p r}\Big|_{r=R_{b}}=0. 
\end{equation*}
The mass $\int_{0}^{R_{b}}rn_{\infty}(t) \diff r$ is constant and equal to the initial mass $\int_{0}^{R_{b}}rn_{0} \diff r$. 
\end{proposition}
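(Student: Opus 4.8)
The plan is to pass to the limit $k\to\infty$ in the weak formulation \eqref{eq:solnk}, the key mechanism being that the energy dissipated over all times is finite, so that its tail on the translated windows $(-T+k,T+k)$ must vanish.

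\textit{Compactness.} First I would observe that the a priori bounds collected in Remark~\ref{rem:more_prop_weak_sol} are uniform in time, hence uniform in $k$. In particular, the Hölder estimates \eqref{eq:Holder_in_space_limit}--\eqref{eq:Holder_in_time_limit} make the family $\{r\,n_k\}$ bounded and equicontinuous on $[-T,T]\times\overline{I_{R_{b}}}$, so by Arzelà--Ascoli, up to a subsequence, $r\,n_k\to r\,n_\infty$ uniformly. Away from the origin this gives uniform convergence of $n_k$ itself, and combined with the uniform bound $\sqrt{r}\,n_k\in L^{\infty}$ (Remark~\ref{rem:Linf_est_sqrtrn}, which also follows from \eqref{eq:Holder_in_space_limit} evaluated at $r_2=0$) this yields the claimed strong convergence $\sqrt{r}\,n_k\to\sqrt{r}\,n_\infty$. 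Conservation of mass and the Neumann condition $n_k'(t,R_{b})=0$ then pass to the limit by this uniform convergence.

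\textit{Vanishing dissipation and time-independence.} Since $n\ge0$ and $V$ is bounded below, the energy $\mathcal{E}[n]$ is bounded below; hence \eqref{eq:energyreg_limit} gives, uniformly in $\tau$,
\[
\int_{0}^{\tau}\!\!\int_{0}^{R_{b}}\mathds{1}_{rn>0}\,rn\,\left|\p_r(\mu+V)\right|^{2}\diff r\diff t\le \mathcal{E}[n_0]-\inf_\tau \mathcal{E}[n(\tau)]<\infty,
\]
so the dissipation is integrable on $(0,\infty)$. Translating by $k$,
\[
\int_{-T}^{T}\!\!\int_{0}^{R_{b}}\mathds{1}_{rn_k>0}\,rn_k\,\left|\p_r(\mu_k+V)\right|^{2}\diff r\diff t=\int_{-T+k}^{T+k}\!\!\int_{0}^{R_{b}}\mathds{1}_{rn>0}\,rn\,\left|\p_r(\mu+V)\right|^{2}\diff r\diff t\longrightarrow 0,
\]
that is $\sqrt{rn_k}\,\p_r(\mu_k+V)\to0$ in $L^2$. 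Writing the flux as $\mathds{1}_{rn_k>0}\,rn_k\,\p_r(\mu_k+V)=\sqrt{rn_k}\cdot\sqrt{rn_k}\,\p_r(\mu_k+V)$ and using that $\sqrt{rn_k}$ is uniformly bounded, the flux itself tends to $0$ in $L^2$. Passing to the limit in \eqref{eq:solnk}, the flux term drops and $\int_{-T}^{T}r\langle\p_t n_\infty,\varphi\rangle=0$ for every admissible $\varphi$, whence $\p_t(r\,n_\infty)=0$ and $n_\infty$ is stationary.

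\textit{Stationary relation and regularity.} To recover $rn_\infty\,\p_r(\mu_\infty+V)=0$ I would argue on the sets $P_\nu=\{rn_\infty>\nu,\ r>\nu\}$ exactly as in Step~3 of the existence proof: there $n_\infty$ is bounded away from $0$, the $H^2$ bounds of Remark~\ref{rem:more_prop_weak_sol} apply, and the uniform convergence of $n_k$ identifies $\p_r\mu_k\rightharpoonup \gamma n_\infty^{\gamma-1}\p_r n_\infty-\delta\,\p_r\big(\tfrac1r\p_r(r\,\p_r n_\infty)\big)=\p_r\mu_\infty$ weakly in $L^2(P_\nu)$. Since $\sqrt{rn_k}\to\sqrt{rn_\infty}\ge\sqrt{\nu}>0$ uniformly on $P_\nu$ while $\sqrt{rn_k}\,\p_r(\mu_k+V)\to0$, we obtain $\p_r(\mu_\infty+V)=0$ a.e.\ on $P_\nu$; letting $\nu\to0$ over $\{rn_\infty>0\}=\cup_\nu P_\nu$ yields \eqref{eq:stationarystates1}. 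Finally the $H^2$ bounds away from $r=0$ and the one-dimensional Sobolev embedding give $n_\infty\in C^1$ in the interior, the Neumann conditions $n_\infty'(0)=n_\infty'(R_{b})=0$ pass to the limit, and the relation $\mu_\infty+V=\mathrm{const}$ on $\{n_\infty>0\}$ closes the $C^1$ regularity up to and across the free boundary.

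I expect the identification step to be the main obstacle: because the mobility is degenerate, $\p_r\mu_k$ is controlled only where $rn_k$ stays bounded below, so the weak limit of the flux has to be identified region by region (on each $P_\nu$) precisely as in the existence proof, rather than globally. A secondary difficulty is the $C^1$ regularity at the singular point $r=0$ and across the free boundary $\{n_\infty=0\}$.
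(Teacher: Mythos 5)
Your proposal follows the same overall route as the paper: translate in time, use integrability of the total dissipation to make the flux tail vanish, get compactness from the Hölder bounds via Arzelà--Ascoli, and identify $\p_r\mu_k$ on the nested sets $P_\nu$ where $rn_\infty$ is bounded below. Two steps, however, contain genuine gaps. The first is your opening claim that the bounds of Remark~\ref{rem:more_prop_weak_sol} are ``uniform in time, hence uniform in $k$'': this is false precisely for the bound your identification step depends on. The entropy inequality \eqref{eq:entropyreg_limit} controls the dissipation only from time $0$, and its integrand contains the non-sign-definite term $r\,\p_r n\,\p_r V$; estimating that term by the energy bound gives only
\begin{equation*}
\int_0^\tau\!\!\int_0^{R_{b}}\left(\gamma r n^{\gamma-1}|\p_r n|^2+\delta r|\p_{rr}n|^{2}+\delta\f{|\p_{r}n|^{2}}{r}\right)\diff r\diff t\le \Phi[n_0]+C\tau ,
\end{equation*}
i.e. linear growth in $\tau$. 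Restricted to the window $(k-T,k+T)$ this yields a bound of order $k$, not a uniform one, so the weak $L^2(P_\nu)$ compactness of $\p_{rr}n_k$ and $\p_r n_k/\sqrt{r}$ --- and hence of $\p_r\mu_k$ --- is not available from what you invoke. The paper's Step 2 supplies the missing idea: restart the entropy inequality at time $k-T$ and observe that $\Phi[n_k(\pm T)]$ is bounded uniformly in $k$, because $\log n\le n-1$ and the uniform bound on $\sqrt{r}\, n_k$ give $\Phi[n_k(\pm T)]\le C$ independently of $k$.

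The second gap concerns the boundary claims. ``The Neumann condition passes to the limit by uniform convergence'' is not a valid mechanism: uniform convergence of $n_k$ controls no derivatives, and at $r=0$ there is nothing to pass at all, since Definition~\ref{def:weak} encodes only $n'(t,R_{b})=0$. The paper proves $n_\infty'(R_{b})=0$ by the integration-by-parts/weak-formulation argument of its Step 5 (mirroring Step 6 of the existence proof), and proves $n_\infty'(0)=0$, together with $C^1$ regularity up to $r=0$, by a dedicated argument (Step 4): the bounds on $\sqrt{r}\,\p_{rr}n_\infty$ and $\p_r n_\infty/\sqrt{r}$ show that $(n_\infty')^2$ is uniformly continuous on $(0,R_{b}]$; Lemma~\ref{lem:f_UC_from_f2_UC} upgrades this to uniform continuity of $n_\infty'$ itself; and finiteness of $\int_0^{R_{b}}|\p_r n_\infty|^2/r\,\diff r$ forces the continuous extension of $n_\infty'$ to vanish at $r=0$, after which classical differentiability at $0$ follows. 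Your appeal to interior elliptic regularity plus ``$\mu_\infty+V$ constant on $\{n_\infty>0\}$'' does not reach the singular point $r=0$; you flag this as a ``secondary difficulty'', but it is part of the statement ($n_\infty\in C^1$ up to the closure, both Neumann conditions), so it must be proved rather than deferred.
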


\noindent This proposition implies the assertions of Theorem~\ref{thm:finalthm1}.

\begin{proof}

\noindent\textit{Step 1: Bounds coming from the energy.} We claim that the following uniform estimates (with respect to $k$) are true:
\begin{enumerate}[label=(B\arabic*)]
\item \label{item_estim12} $\{\sqrt{r} \, \p_r n_k \}$ in $L^{\infty}((-T,T); L^2(I_{R_{b}}))$,
\item \label{item_estim12+1/2} $\{\sqrt{r} \, n_k \}$ in $L^{\infty}((-T,T) \times I_{R_{b}})$,
\item \label{item_estim42} $\{r \, \p_t n_k \}$ in $L^{2}((-T,T);H^{-1}(I_{R_{b}}))$,
\item \label{item_estim32} $|r_{2}n_{k}(t_{2},r_{2})-r_{1}n_{k}(t_{1},r_{1})|\le C(|t_{2}-t_{1}|^{1/8}+|r_{2}-r_{1}|^{1/2})$,
\item \label{item_estim22}$L_{k}(T) := \int_{-T}^{T}\int_{0}^{R_{b}}\mathds{1}_{rn_{k}>0}\,rn_{k}|\p_{r}(\mu_{k}+V)|^{2} \xrightarrow[k\to +\infty]{}0. $
\end{enumerate}

\noindent The energy decay estimate \eqref{eq:energyreg_limit} and assumption $\mathcal{E}[n_0] < \infty$ imply that $\mathcal{E}[n_{k}(t)]$ remains bounded with respect to $k$ for all $k>T$. Therefore, \ref{item_estim12} follows directly from \eqref{eq:energyreg_limit} and then \ref{item_estim12+1/2} follows from Remark \ref{rem:Linf_est_sqrtrn}. As $r\,n_k(t,r)$ is obtained as the pointwise limit of $(r+\varepsilon)\,n_{\eps}(t+k,r)$, estimates~\ref{item_estim42} and~\ref{item_estim32} follow directly from passing to the limit $\varepsilon \to 0$ in \ref{item_estim5} and from~\eqref{eq:Holder_in_space}--\eqref{eq:Holder_in_time} in Proposition~\ref{prop:apriori}. Finally, to see~\ref{item_estim22}, we note that
$$
\int_{0}^{\infty}\int_{0}^{R_{b}}\mathds{1}_{rn>0}\,rn|\p_{r}(\mu+V)|^{2} \diff r \diff t\le\mathcal{E}(n_{0}),
$$
so by change of variables we obtain
\begin{equation*}
L_{k}(T)\le \int_{k-T}^{\infty}\int_{0}^{R_{b}}\mathds{1}_{rn>0}\,rn|\p_{r}(\mu+V)|^{2} \diff r \diff t \xrightarrow[k\to +\infty]{}0.
\end{equation*}
\\

\noindent\textit{Step 2: Bounds coming from the entropy.} We prove now uniform estimates
\begin{enumerate}[label=(C\arabic*)]
\item \label{item_estim13} $\{\sqrt{r} \, \p_{rr} n_k \}$ in $L^{2}((-T,T); L^2(I_{R_{b}}))$,
\item \label{item_estim23}$\{\f{\p_{r}n_{k}}{\sqrt{r}}\}$ in $L^{2}((-T,T); L^2(I_{R_{b}}))$.
\end{enumerate}
To this end, we integrate the entropy relation~\eqref{eq:entropyreg_limit} between $k-T$ and $k+T$ and perform a change of variables to obtain
\begin{multline*}
\int_{-T}^{T}\int_{0}^{R_{b}}\left(\gamma r n_{k}^{\gamma-1}|\p_{r}n_{k}|^{2}+\delta r |\p_{rr}n_{k}|^{2}+\delta\f{|\p_{r}n_{k}|^{2}}{r}\right)\diff r \diff t \leq \\ \leq \Phi[n_k(-T, \cdot)]- \Phi[n_k(T,\cdot)]+ \int_{-T}^{T}\int_{0}^{R_{b}} r\, \p_r n_k \, \p_r V\diff r \diff t.
\end{multline*}
We need to bound the right-hand side. Concerning the entropy term, we recall the inequality $\log n \leq n-1$ valid for $n \geq 0$ so that, by bound~\ref{item_estim12+1/2},
\begin{multline*}
\Phi(n_k(T, \cdot)) = \int_{0}^{R_{b}} r( n_k(T, r)(\log n_k(T,r) -1)+1) \diff r \leq \\ \leq \int_0^{R_b} r\, ((n_k(T,r))^2 + n_k(T,r)) \diff r \leq C\, \|\sqrt{r}\,n_k\|_{\infty} \leq C .
\end{multline*}
The same estimate is satisfied by $\Phi(n_k(T))$. Concerning $\int_{-T}^{T}\int_{0}^{R_{b}} r\, \p_r n_k \, \p_r V\diff r \diff t$, we estimate it using \ref{item_estim12} and uniform bound on $\p_r V$.
Therefore,

\begin{equation*}
\int_{-T}^{T}\int_{0}^{R_{b}}\left(\gamma r n_{k}^{\gamma-1}|\p_{r}n_{k}|^{2}+\delta r |\p_{rr}n_{k}|^{2}+\delta\f{|\p_{r}n_{k}|^{2}}{r}\right)\diff r \diff t \leq  C(T,\mathcal{E}(n_{0})). 
\end{equation*}

\noindent\textit{Step 3: Convergence in equation~\eqref{eq:solnk}.} Reasoning as in the proof of Theorem~\ref{thm:finalthm1} we obtain in the limit $k\to\infty$ 
\begin{equation*}
\int_{-T}^{T}r\langle\p_{t}n_{\infty},\varphi\rangle_{H^{-1},H^{1}} \diff t +\int_{-T}^{T} \int_{0}^{R_{b}}\mathds{1}_{rn_{\infty}>0}\, r\, n_{\infty}\, \p_{r}(\mu_{\infty}+V)\p_{r}\varphi \diff r \diff t=0.
\end{equation*} 
We can show even better, namely that $\p_t n_{\infty} = 0$. Indeed, from the Cauchy-Schwarz inequality we obtain that for every test function $\chi$ compactly supported in $(-T,T)\times(0,R_{b})$, 
\begin{align*}
   \left| \int_{-T}^{T}\int_{0}^{R_{b}}r\p_{t}n_{k}\chi\right|&= \left|\int_{-T}^{T}\int_{0}^{R_{b}}\mathds{1}_{rn_{k}>0}\,rn_{k}\p_{r}(\mu_{k}+V)\p_{r}\chi \right|\\
    &\le C(T,R_{b})\norm{\p_{r}\chi}_{L^{\infty}}\int_{-T}^{T}\int_{0}^{R_{b}}\mathds{1}_{rn_{k}>0}\,rn_{k}|\p_{r}(\mu_{k}+V)|^{2}\xrightarrow[k\to\infty]{} 0.
\end{align*}
where we used \ref{item_estim12+1/2} and \ref{item_estim22}. This means that in the limit, $n_{\infty}$ does not depend on the time variable $t$. 
Then, in the limit,  we obtain that, for every test function $\chi$,

 $$
 \int_{-T}^{T}\int_{0}^{R_{b}}\mathds{1}_{rn_{\infty}>0}\,rn_{\infty}\p_{r}(\mu_{\infty}+V)\p_{r}\chi=0.
 $$

\noindent\textit{Step 4: $n_{\infty}'$ is uniformly continuous and $n_{\infty}$ satisfies Neumann boundary condition $n_{\infty}'(0) = 0$.}
We recall that $n_{\infty}$ does not depend on time. Moreover, the estimate \ref{item_estim13} implies that $n'_{\infty}$ is continuous on $(0,R_b]$. Furthermore, from the estimates~\ref{item_estim13}-\ref{item_estim23}, we obtain the absolute continuity in space of the derivative of $n_{\infty}$. Indeed, for every $r_{1}, r_{2}\in (0,R_{b})$ we obtain

\begin{align*}
(\p_{r}n_{\infty}(r_{2}))^{2}-(\p_{r}n_{\infty}(r_{1}))^{2}&=2\int_{r_{1}}^{r_{2}}\p_{r}n_{\infty}(r) \, \p_{rr}n_{\infty}(r)\diff r\\
&=2\int_{r_{1}}^{r_{2}}\f{\p_{r}n_{\infty}(r)}{\sqrt{r}} \, \sqrt{r} \, \p_{rr}n_{\infty}(r)\diff r\\
&\le 2 \Big(\int_{r_{1}}^{r_{2}}\f{|\p_{r}n_{\infty}(r)|^{2}}{r}\diff r\Big)^{1/2}\Big(\int_{r_{1}}^{r_{2}}r |\p_{rr}n_{\infty}(r)|^{2}\diff r\Big)^{1/2} .
\end{align*}
From this, we deduce that $\p_r n_{\infty}$ is bounded so that by the Sobolev embedding, $n_{\infty}$ is continuous and 
$$
n_{\infty}(r_2) - n_{\infty}(r_1) = \int_{r_1}^{r_2} \p_r n_{\infty}(r) \diff r.
$$
Next, we discover that $(0,R_b] \ni r \mapsto (\p_r n_{\infty}(r))^2$ is uniformly continuous, so that by Lemma \ref{lem:f_UC_from_f2_UC} below, $n_{\infty}'(r)$ 
is uniformly continuous on $(0,R_b]$. Therefore, there is the unique extension of $r \mapsto n_{\infty}'(r)$ to $[0,R_b]$ which is uniformly continuous. Furthermore, in view of 
\begin{equation*}
\label{n'(0)=0}
\int_{0}^{R_{b}}\f{|\p_{r}n_{\infty}|^{2}}{r}\diff r \le C,     
\end{equation*}
this extension has to be obtained by setting $n_{\infty}'(0)=0$.
\\ 

\noindent It remains to prove that $n_{\infty}$ is differentiable (in the classical sense) at $r = 0$ and $n_{\infty}'(0) = 0$. To this end, we write
$$
\left|\frac{n_{\infty}(r) - n_{\infty}(0)}{r} \right| \leq \frac{1}{r}  \int_0^r \left| \p_r n_{\infty}(u)\right| \diff u  \leq \sup_{u \in (0,r]} |\p_r n_{\infty}(u)| \to 0
$$
as $r \to 0$ by uniform continuity which, again,  implies that $n_{\infty}'(0)$ exists and $n_{\infty}'(0) = 0$.
\\

\noindent\textit{Step 5: Neumann boundary condition $n_{\infty}'(R_{b})=0$.} The proof is similar to Step~6 in Section~\ref{subsect:eps_to_zero}. For a fixed $k \in \N$, there is a set of times $\mathcal{N}_k \subset (0,T)$ of full measure such that, when $t \in \mathcal{N}_k$, we have $n_k'(t,R_b) = 0$ and $n_k(t,\cdot) \in H^2(R_0, R_b)$. Let $\mathcal{N} = \cap_{k \in \N} \mathcal{N}_k$, which is again the set of full measure. For $t \in \mathcal{N}$ and $\phi$ as in Step~6 in Section~\ref{subsect:eps_to_zero}, we have 
$$
\int_{R_0}^{R_b} n_k'(t,r) \, \phi'(r) + n_k''(t,r) \, \phi(r) \diff r = 0.
$$
We multiply by a smooth test function $\eta(t)$ and pass to the weak limit $\varepsilon \to 0$ to deduce
$$
\int_0^T \eta(t) \diff t \, \int_{R_0}^{R_b} (n_{\infty}'(r) \, \phi'(r) + n_{\infty}''(r) \, \phi(r)) \diff r = 0.
$$
As $n_{\infty} \in H^2(R_0, R_b)$ we deduce $n_{\infty}'(R_b) = 0$.
\end{proof}

\begin{lemma}\label{lem:f_UC_from_f2_UC}
Let $f:(a,b)\to \R$ be a continuous function such that $f^2$ is uniformly continuous. Then $|f|$ and $f$ are also uniformly continuous.
\end{lemma}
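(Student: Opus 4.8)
The plan is to first establish uniform continuity of $|f|$ and then upgrade this to $f$ itself, using the sign structure that continuity of $f$ forces. The key remark is that $|f| = \sqrt{f^2}$, and the square root is itself uniformly continuous on $[0,\infty)$: one has the elementary bound $|\sqrt{s} - \sqrt{t}| \leq \sqrt{|s-t|}$ for $s,t \geq 0$ (square both sides and use $\sqrt{st}\geq \min(s,t)$). Since a composition of uniformly continuous functions is uniformly continuous and $f^2$ is uniformly continuous by hypothesis, $|f|$ is uniformly continuous. Concretely, given $\varepsilon > 0$ I would choose $\delta > 0$ so that $|x-y| < \delta$ forces $|f^2(x) - f^2(y)| < (\varepsilon/2)^2$, and then $\big||f(x)| - |f(y)|\big| \leq \sqrt{|f^2(x)-f^2(y)|} < \varepsilon/2$.

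Next I would deduce uniform continuity of $f$ from that of $|f|$, keeping the same $\delta$. For $x,y$ with $|x-y| < \delta$ I split into two cases. If $f(x)$ and $f(y)$ have the same sign (or one of them vanishes), then $|f(x)-f(y)| = \big||f(x)| - |f(y)|\big| < \varepsilon/2 < \varepsilon$. If they have opposite signs, continuity of $f$ together with the intermediate value theorem produces a point $z$ strictly between $x$ and $y$ with $f(z)=0$; since $z$ lies between $x$ and $y$ we have both $|x-z| < \delta$ and $|y-z| < \delta$, hence $|f(x)| = \big||f(x)| - |f(z)|\big| < \varepsilon/2$ and likewise $|f(y)| < \varepsilon/2$, so that $|f(x)-f(y)| \leq |f(x)| + |f(y)| < \varepsilon$. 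In either case the uniform continuity estimate for $f$ holds, completing the argument.

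The only genuinely delicate point is the opposite-sign case, and it is exactly here that continuity of $f$ (not merely of $f^2$) is essential: the intermediate value theorem supplies a zero of $f$ between the two points, at which $f$ and $|f|$ coincide, so smallness of $|f|$ near that zero controls $f$ itself. I do not expect any obstacle beyond this case analysis, as the square-root estimate and the bookkeeping with $\delta$ are routine.
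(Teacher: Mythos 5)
Your proof is correct and follows essentially the same route as the paper's: uniform continuity of $|f|$ via composition with the ($\tfrac12$-Hölder) square root, then the sign case analysis where the intermediate value theorem produces a zero of $f$ between points of opposite sign and the triangle inequality through that zero closes the argument. No gaps; the only difference is cosmetic (you make the square-root estimate $|\sqrt{s}-\sqrt{t}|\le\sqrt{|s-t|}$ explicit where the paper just cites Hölder continuity).
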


\begin{proof}
First, we observe that $|f|$ is uniformly continuous as a composition of a $\frac{1}{2}$-Hölder continuous function and a uniformly continuous one. Therefore,
\begin{equation}\label{eq:continuity_mod_f}
\forall {\varepsilon>0} \quad  \exists {\delta>0} \quad \forall {x,y \in (a,b)} \quad |x-y| \leq \delta \implies ||f(x)| - |f(y)|| \leq \varepsilon.
\end{equation}
Fix $\varepsilon>0$ and choose $\delta>0$ such that \eqref{eq:continuity_mod_f} holds with $\varepsilon/2$. Let $x, y \in (a,b)$ be such that $|x-y| \leq \delta$. If $f(x)$, $f(y)$ have the same sign we are done. Otherwise, by continuity, there exists $z$ between $x$ and $y$ such that $f(z) = 0$. As $|x-z|, |y-z| \leq \delta$, we can apply \eqref{eq:continuity_mod_f} again to deduce
$$
|f(x) - f(y)| \leq |f(x) - f(z)| + |f(z) - f(y)| \leq \varepsilon/2 + \varepsilon/2 = \varepsilon.
$$
\end{proof}

\section{Properties of the stationary states}
\label{sect:stationary_states}

The stationary solution built previously has compact support for $R_b$ large enough.  This is the main content of Theorem~\ref{thm:finalthm2} which we prove here. We still use, to simplify notations, the potential $V(r)=r^{2}$. We postpone to Appendix~\ref{app:generalforce} the case of a more general potential $V(r)$.

\subsection{Proof of Theorem~\ref{thm:finalthm2} (A)} 

We recall that, from Theorem \ref{eq:thm1_stationary}, $n_\infty \geq 0$ is $C^1$, $n_\infty'(R_{b})=0, n_\infty'(0)=0$. 

\begin{proof}[Proof of Theorem~\ref{thm:finalthm2} (A)]
To prove that $n_\infty$ is non-increasing, the main idea is to show that it cannot have a local maximum except at the point $r=0$. 

To do so, by contradiction, we assume there is  local maximum at $R_2 \in (0,R_{b}]$. This implies that $n_{\infty}'(R_{2})=0$, $n_{\infty}''(R_{2})\le 0$. Also by $C^1$ regularity, in a neighborhood of $R_2$ the equation hold 
\begin{equation*}
n_{\infty}^{\gamma}(r)-\f{\delta}{r}n_{\infty}'(r)-\delta n_{\infty}''(r)=C-r^{2},     
\end{equation*}
for some constant $C$. This equation implies that the local maximum is strict.

Also, still  by $C^1$ regularity, in this neighborhood of $R_2$ there is a point  $0< R_1 <R_2$ such that $0 <n_{\infty}(R_1)< n_{\infty}(R_2)$ and  $n_{\infty}'(R_{1})> 0$. Evaluating the equation at the points $R_{1}$ and $R_{2}$, and eliminating the constant $C$,  we obtain 
\begin{equation*}
\delta n_{\infty}''(R_{1})=R_{1}^{2}-R_{2}^{2}+n_{\infty}^{\gamma}(R_{1})-n_{\infty}^{\gamma}(R_{2})-\f{\delta}{R_{1}}n_{\infty}'(R_{1})+\delta n_{\infty}''(R_{2})< 0.     
\end{equation*}
Therefore $n_{\infty}$ is strictly concave at $R_{1}$. Consequently, $R_1$ can be continued to smaller values, $n_{\infty}(R_1)$ staying concave increasing (and thus $n_{\infty}'$ larger and larger as $R_1$ decreases)  until either $R_1=0$ or $n_{\infty}(R_1)=0$. In both cases we get  a contradiction with the condition $n_{\infty}'(R_1)=0$ which holds at $0$ and at values where $n_{\infty}(R_1)=0$. 
 
Consequently, the only possible local maximum is at $0$ and $n_{\infty}$ is non-increasing.
\\

The upper bound on $n_\infty(R_b)$ is just to say that $n_\infty(r) \geq n_\infty(R_b)$ on the full interval.

\end{proof}

\subsection{Proof of Theorem~\ref{thm:finalthm2} (B)}

We now consider a stationary state such that $n_{\infty}(R_{b})=0$. Theorem~\ref{thm:finalthm2} (A) asserts that there is $R\in[0,R_{b}]$ such that $n_{\infty}(r)=0$ on $[R,R_{b}]$ and $n_{\infty}$ is positive on $[0,R)$. Hence, on $[0,R]$, the relation~\eqref{eq:stationarystates1} shows that there exists a constant, that we write $R^{2}-\lambda$, such that $n_{\infty}$ solves  

\begin{equation}\label{eq:model_positive_part}
\begin{cases}
n^{\gamma}(r)-\f{\delta}{r}n'(r)-\delta n''(r)=R^{2}-r^{2}-\lambda , \qquad 0 \leq r \leq R, 
\\[8pt]
n(R) = 0.
\end{cases}
\end{equation}
Because it is $C^1$, the stationary solution also satisfies $n_{\infty}'(R)=0$ (and this is also true for $R=R_b$ as stated in Theorem~\ref{thm:finalthm1}.
We prove that there exists only one value $\lambda$ such that the solution of Equation~\eqref{eq:model_positive_part} also satisfies the condition $n'(R)=0$.
\\

Firstly, we exclude some  values of $\lambda$. Here, we use the notation $n^{\gamma}$ for $\max(0,n)^{\gamma}$.

\begin{lemma} \label{lm:extremelambda}
Being given $\lb \in \R$, let $n$ be the solution of Equation~\eqref{eq:model_positive_part}
Then, we have
\begin{itemize}
    \item when $\lambda \ge R^2$,  \qquad $n(r) \leq 0$ \quad $\forall r\in[0,R]$ \quad and \quad$n'(R) > 0$,
    \item when $\lambda \le \, 0$, \, \qquad $n(r) \geq 0$ \quad  $\forall r\in[0,R]$ \quad and \quad $n'(R) < 0$. 
\end{itemize}
\end{lemma}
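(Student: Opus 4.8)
The plan is to treat the sign of $n$ and the sign of $n'(R)$ separately, and in each case to split according to whether $\lambda\ge R^2$ or $\lambda\le 0$. Throughout I use that the radial solution is regular at the origin, so that the implicit boundary condition $n'(0)=0$ holds, and I rewrite the operator as $\f{\delta}{r}n'+\delta n''=\f{\delta}{r}(rn')'$, with the convention $\f1r(rn')'\to 2n''(0)$ as $r\to 0$.

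\emph{Sign of $n$ via the maximum principle.} First I would fix the sign of $n$ by contradiction. If $\lambda\ge R^2$, the right-hand side $R^2-r^2-\lambda\le 0$ on $[0,R]$. Were $n$ positive somewhere, then since $n(R)=0$ its maximum would be attained at an interior point $r_0\in[0,R)$ with $n(r_0)>0$, $n'(r_0)=0$ and $n''(r_0)\le 0$ (using $n'(0)=0$ if $r_0=0$). Evaluating \eqref{eq:model_positive_part} at $r_0$, the first-order term vanishes and $-\delta n''(r_0)\ge 0$, so the left-hand side is at least $n^\gamma(r_0)>0$, contradicting that the right-hand side is $\le 0$. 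Hence $n\le 0$. The case $\lambda\le 0$ is symmetric: now $R^2-r^2-\lambda\ge 0$, and at a hypothetical negative minimum, attained at some $r_0<R$, one has $n^\gamma(r_0)=0$ and $n''(r_0)\ge 0$, so the left-hand side equals $-\delta n''(r_0)\le 0$, while the right-hand side is $R^2-r_0^2-\lambda\ge R^2-r_0^2>0$, again a contradiction; thus $n\ge 0$.

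\emph{Sign of $n'(R)$ when $\lambda\ge R^2$.} Here I would integrate the equation. Multiplying \eqref{eq:model_positive_part} by $r$, integrating over $[0,R]$ and using $[rn']_0^R=Rn'(R)$ (the origin contributing nothing since $n'(0)=0$), one gets $\int_0^R rn^\gamma\,\diff r-\delta Rn'(R)=\int_0^R r(R^2-r^2-\lambda)\,\diff r$. Since $n\le 0$ forces $n^\gamma=\max(0,n)^\gamma\equiv 0$, the first integral vanishes, and a direct computation gives $\delta Rn'(R)=(\lambda-R^2)\tfrac{R^2}{2}+\tfrac{R^4}{4}\ge \tfrac{R^4}{4}>0$, so $n'(R)>0$.

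\emph{Sign of $n'(R)$ when $\lambda\le 0$, and the main obstacle.} Integration does not settle this case, because $\int_0^R rn^\gamma\,\diff r$ cannot be bounded above by the remaining terms; so I would argue locally near $R$. From $n\ge 0$ and $n(R)=0$ we get $n'(R)\le 0$, and it suffices to exclude $n'(R)=0$. Assuming it, set $w(r)=rn'(r)$, so $w(R)=0$ and $\delta w'(r)=r\bigl(n^\gamma(r)-(R^2-r^2-\lambda)\bigr)$. If $\lambda<0$, then $w'(R)=R\lambda/\delta<0$, so by continuity $w'<0$ on a left-neighborhood of $R$; hence $w>0$, i.e.\ $n'>0$, just left of $R$, forcing $n<0$ there and contradicting $n\ge 0$. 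The genuinely delicate case is $\lambda=0$, where $w'(R)=0$ and the first-order test is inconclusive: here I would use that $n'(R)=0$ and $n\ge 0$ give $n(r)=o(R-r)$, hence $n^\gamma(r)=o((R-r)^\gamma)$ is negligible compared with $R^2-r^2\sim 2R(R-r)$ precisely because $\gamma>1$; consequently $\delta w'(r)\sim -2R^2(R-r)<0$ near $R$, and the same contradiction follows. This boundary case $\lambda=0$, together with the regularization of the singular term at $r=0$ used in the maximum principle, is where the argument demands the most care.
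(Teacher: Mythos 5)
Your proof is correct, and it diverges from the paper's in two of the three delicate steps. The sign of $n$ is handled identically (the paper simply invokes the maximum principle; you spell out the interior-extremum evaluation, including the centre $r_0=0$ via $n'(0)=0$, which is the right reading of the lemma since $n$ is the radial profile of a solution on the ball). For $\lambda\ge R^2$ the paper argues locally: if $n'(R)=0$ then the equation at $r=R$ gives $n''(R)=\lambda/\delta>0$, contradicting $n\le 0$ to the left of $R$; you instead integrate the equation in divergence form, $\delta(rn')'=r\bigl(n^\gamma-(R^2-r^2-\lambda)\bigr)$, over $[0,R]$, which (since $n^\gamma\equiv 0$ when $n\le 0$) yields the exact identity $\delta R\,n'(R)=\tfrac{\lambda R^2}{2}-\tfrac{R^4}{4}\ge\tfrac{R^4}{4}$ — a quantitative lower bound rather than mere strict positivity. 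For $\lambda\le 0$ your treatment of $\lambda<0$ via $w=rn'$ is equivalent to the paper's Taylor argument from $n''(R)=\lambda/\delta<0$; the genuine difference is the borderline case $\lambda=0$: the paper differentiates the equation once more to get $n^{(3)}(R)=2R/\delta>0$ and concludes by a third-order Taylor expansion, which tacitly uses $C^3$ regularity of $n$ up to $r=R$, whereas your argument needs only the equation itself plus differentiability of $n$ at $R$ — from $n(R)=n'(R)=0$ you get $n(r)=o(R-r)$, hence $n^\gamma(r)=o(R-r)$ for $\gamma\ge 1$, so $\delta w'(r)\sim -2R^2(R-r)<0$ near $R$, forcing $n<0$ just left of $R$. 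So your route buys a quantitative estimate in the first case and a weaker regularity requirement in the second, at the price of the slightly longer asymptotic bookkeeping; the paper's route is shorter but leans on pointwise higher-order derivatives at the boundary.
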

\begin{proof}

For $\lambda \ge R^2$,  $n \leq 0$ is a consequence of the maximum principle and it follows immediately that $n'(R)\ge 0$. If we had  $n'(R)=0$,  the equation gives  $n''(R)=\f{\lambda}{\delta}>0$ which is in contradiction with the fact that $n$ is nonpositive in a small left neighborhood of $R$. 
\\

\noindent For $\lambda \le 0$, $n \geq 0$ is a consequence of the maximum principle and it follows that $n'(R) \leq 0$. To exclude the possibility that $n'(R) = 0$, we suppose by contradiction that $n'(R)=0$. Since we also have $n(R)=0$, we find that $n''(R)=\frac{\lambda}{\delta}$. As before, for $\lambda < 0$, we find contradiction. For $\lambda = 0$, we have $n''(R) =0$. Differentiating the equation, we find
$$
\gamma \, n^{\gamma-1}(r) \, n'(r) - \delta n^{(3)}(r) - \delta \frac{n''(r)}{r} + \delta \frac{n'(r)}{r^2}= -2r,
$$
and thus $n^{(3)}(R) = 2R/\delta > 0$. As $n(R)=n'(R) = n''(R)=0$, it follows that in a small neighbourhood of $R$, $n$ has to be negative raising a contradiction. The lemma is proved.
\end{proof}

Secondly, from Lemma~\ref{lm:extremelambda}, we may conclude that there is at least one value $\lambda \in (0,R^2)$ such that the Neumann condition is satisfied. This value is unique

\begin{lemma}\label{lem:lambda_exist_continuity}
There exists only one $\lambda \in (0,R^2)$ such that the solution of \eqref{eq:model_positive_part} satisfies $n'(R) = 0$. 
\end{lemma}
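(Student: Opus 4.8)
The plan is to regard the solution of~\eqref{eq:model_positive_part} as a one-parameter family $n_\lambda$, where for each $\lambda$ the function $n_\lambda$ is the solution of the ODE satisfying the regularity condition $n_\lambda'(0)=0$ at the singular endpoint together with $n_\lambda(R)=0$, and to prove that the scalar map $\lambda \mapsto n_\lambda'(R)$ is continuous and \emph{strictly increasing} on $[0,R^2]$. Granting this, existence of a root follows from the sign information already recorded in Lemma~\ref{lm:extremelambda} together with the intermediate value theorem, while strict monotonicity forces the root to be unique. Thus the heart of the matter is a comparison argument quantifying how $n_\lambda$ moves with $\lambda$.

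For the monotonicity I would fix $\lambda_1 < \lambda_2$, set $w := n_{\lambda_1} - n_{\lambda_2}$, and subtract the two copies of the equation. Writing $\Delta_r w = w'' + \f{1}{r}\, w'$ for the radial Laplacian, the difference solves
\begin{equation*}
-\delta\, \Delta_r w + a(r)\, w = \lambda_2 - \lambda_1 > 0, \qquad w'(0) = 0, \quad w(R) = 0,
\end{equation*}
where $a(r) = \bigl(n_{\lambda_1}^\gamma - n_{\lambda_2}^\gamma\bigr)/(n_{\lambda_1}-n_{\lambda_2})$ (and $a(r)=0$ where $n_{\lambda_1}=n_{\lambda_2}$), using the convention $n^\gamma=\max(0,n)^\gamma$. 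Since $s \mapsto \max(0,s)^\gamma$ is nondecreasing and locally Lipschitz and the $n_{\lambda_i}$ are bounded, the coefficient $a$ is nonnegative and bounded. A maximum principle then yields $w \ge 0$ on $[0,R]$: at a hypothetical negative interior minimum $r_0 \in (0,R)$ one has $w'(r_0)=0$ and $w''(r_0)\ge 0$, hence $\Delta_r w(r_0)\ge 0$ and $a(r_0)w(r_0)\le 0$, so the left-hand side would be $\le 0$, contradicting that it equals $\lambda_2-\lambda_1>0$; the endpoint $r=0$ is excluded the same way using $w'(0)=0$ and $\Delta_r w(0)=2w''(0)$, and $r=R$ is excluded because there $w=0$ is not negative.

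Once $w \ge 0$ with $w(R)=0$ is known, the strict inequality $w'(R)<0$ follows from a concavity argument in the spirit of Lemma~\ref{lm:extremelambda}. One has $w'(R)\le 0$ automatically, and if $w'(R)=0$ then evaluating the equation at $r=R$, where $w(R)=w'(R)=0$, forces $\delta\, w''(R) = -(\lambda_2-\lambda_1) < 0$; by Taylor expansion $w$ would then be strictly negative just to the left of $R$, contradicting $w\ge 0$. Hence $n_{\lambda_1}'(R) = w'(R) + n_{\lambda_2}'(R) < n_{\lambda_2}'(R)$, which is exactly the strict monotonicity of $\lambda \mapsto n_\lambda'(R)$. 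Combined with continuous dependence of $n_\lambda$ on $\lambda$ and with Lemma~\ref{lm:extremelambda}, which gives $n_\lambda'(R)<0$ at $\lambda=0$ and $n_\lambda'(R)>0$ at $\lambda=R^2$, the intermediate value theorem produces exactly one $\lambda \in (0,R^2)$ with $n_\lambda'(R)=0$.

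The step I expect to be most delicate is making the maximum principle and the boundary-derivative argument rigorous at the singular endpoint $r=0$, where the drift $\f{\delta}{r}$ blows up. This is cleanest to handle by passing to the weighted divergence form $-\delta (r w')' + r\, a\, w = r(\lambda_2-\lambda_1)$, a regular weighted Sturm--Liouville problem for which $r=0$ is a removable singularity once $w'(0)=0$ is imposed; alternatively one checks directly from the equation that $w''(0)$ exists so that the second-derivative test at $0$ is legitimate. A secondary, more routine point is the continuous dependence of $n_\lambda$ (hence of $n_\lambda'(R)$) on $\lambda$, needed only for the existence half; it follows from well-posedness of the boundary value problem, itself a consequence of the same comparison estimate applied to two solutions with equal $\lambda$.
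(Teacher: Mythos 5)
Your proof is correct, and it rests on the same two ingredients as the paper's: monotonicity of the solution with respect to $\lambda$, and a second-derivative/Taylor computation at $r=R$. The difference is in organization and completeness. The paper argues uniqueness by contradiction: if $n_1,n_2$ both satisfy the Neumann condition with $\lambda_1<\lambda_2$, the equation gives $n_i''(R)=\lambda_i/\delta$, so a Taylor expansion shows $n_2>n_1$ near $R$, ``which contradicts that $n$ decreases with $\lambda$'' --- a comparison claim that the paper invokes but never proves in the visible text. Your proposal supplies exactly that missing ingredient: the pointwise maximum principle for $w=n_{\lambda_1}-n_{\lambda_2}$ (including a correct treatment of the singular drift $\f{\delta}{r}$ at $r=0$) proves the comparison, and your evaluation $-\delta w''(R)=\lambda_2-\lambda_1>0$ followed by Taylor expansion is the paper's computation rewritten for the difference $w$. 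Casting the conclusion as strict monotonicity of the shooting map $\lambda\mapsto n_\lambda'(R)$ then yields uniqueness and, combined with Lemma~\ref{lm:extremelambda} and the intermediate value theorem, existence as well --- which the paper disposes of in one sentence before the lemma, equally reliant on an unproven continuity statement. One caveat: your closing remark that well-posedness for fixed $\lambda$ follows from ``the same comparison estimate'' does not quite work, because with $\lambda_1=\lambda_2$ the pointwise argument at a hypothetical negative minimum gives only the non-strict conclusion $0\le 0$; uniqueness for fixed $\lambda$ should instead be run through the weighted energy identity $\delta\int_0^R r\,|w'|^2\diff r+\int_0^R r\,a\,w^2\diff r=0$ (integrating against $rw$ and using $w(R)=0$), or through the strong maximum principle. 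This is, however, the routine half of the argument, and the paper itself treats it no more rigorously.
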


\begin{proof}
%
%
Suppose there are two solutions $n_{1}, n_{2}$ of \eqref{eq:model_positive_part} with $0<\lambda_{1}<\lambda_{2}<R^{2}$ such that $n_{i}(R)=n_{i}'(R)=0$ for $i=1,2$. From \eqref{eq:model_positive_part}, we find $n_{i}''(R)=\f{\lambda_{i}}{\delta}$. Therefore $0<n_{1}''(R)<n_{2}''(R)$ and we conclude by a Taylor expansion that $n'_{2}$ is smaller than $n'_{1}$ in a small left neighborhood of~$R$ which contradicts that $n$ decreases with $\lambda$. This proves Lemma~\ref{lem:lambda_exist_continuity}. 
\end{proof}
\begin{proof}[Proof of Theorem \ref{thm:finalthm2} (B)]
Clearly, $n_{\infty}$ is a solution to the problem \eqref{eq:model_positive_part} with some $\lambda$. By Lemma~\ref{lm:extremelambda}, we know that $\lambda \in (0,R^2)$ and then Lemma~\ref{lem:lambda_exist_continuity} yields the unique value of $\lambda$.
\\

For the second assertion, if there are two solutions $(n_1, \lambda_1)$, $(n_2, \lambda_2)$ of \eqref{eq:model}, Lemma~\ref{lem:lambda_exist_continuity} applies and we obtain that $\lambda_1 = \lambda_2$. The conclusion follows from uniqueness of solutions of the elliptic PDE~\eqref{eq:model_positive_part}.

\end{proof}

\subsection{Proof of Theorem \ref{thm:finalthm2} (C)}

Consider a solution $n_{\infty}$ of \eqref{eq:thm1_stationary} with $a:= n_{\infty}(R_{b}) > 0$. From Theorem~\ref{thm:finalthm2} (A), we know that $n_{\infty}$ is  $C^1$ and  $n_{\infty}'\leq 0$ so that $n_{\infty} > 0$. Therefore, from the equation for $n_\infty$, $rn_{\infty}(r)$ is $C^2$, and~\eqref{eq:thm1_stationary} boils down to
\begin{equation}\label{eq:PDE_stat_state_n>0}
\begin{cases}
n_{\infty}^{\gamma} - \f{\delta}{r}n'-\delta n'' = R_{b}^2 - r^2 - \lambda &\mbox{ in } (0,R_{b}),
\\
n_{\infty}(R_{b})=a >0, \, n'(R_{b}) = 0,\\
m = \int_0^{R_{b}} r\, n_{\infty}(r) \diff r, \qquad &
\end{cases}\end{equation}
where $\lambda$ is some constant. Our goal is to prove that if $R_{b}$ is sufficiently large with respect to $m$, there is no such solution of~\eqref{eq:PDE_stat_state_n>0}. Therefore we now assume that $R_{b}>2$.
\\

A useful formula in the sequel is, because of radial symmetry and after integration between~$0$ and~$r$,
\begin{equation}\label{eq:th2Cintegral}
\int_0^r \bar r\, n_{\infty}^{\gamma}(\bar r) \diff \bar r  - \delta r \p_r n_{\infty} = ({R_{b}}^2-\lb)\frac{r^2}{2}- \frac{r^4}{4}.
\end{equation}
Another useful general observation is that we may assume 
\[
n_{\infty}({R_{b}})^{\gamma} \leq {R_{b}}^2.
\]
Otherwise, by Theorem \ref{thm:finalthm2} (B), we have $m\geq \frac{{R_{b}}^{2+2/\gamma}}{2}$ which proves the result. 
\\

Firstly, we provide lower and upper bounds on admissible values of the constant $\lambda$
\begin{equation}\label{eq:lu_bound_lambda}
-{R_{b}}^2 \leq -n_{\infty}({R_{b}})^\gamma \leq \lb \leq \frac{{R_{b}}^2}{2}.
\end{equation}
The first inequality is the above restriction on $n_{\infty}({R_{b}})^\gamma$. The second inequality is valid because $n_{\infty}''({R_{b}}) \geq 0$ since $n_{\infty}$ is decreasing and $n_{\infty}'({R_{b}})=0$. The third inequality is just~\eqref{eq:th2Cintegral} at $r={R_{b}}$.\\

Secondly, we provide a control of $n_{\infty}(0)$. To do so, using~\eqref{eq:th2Cintegral}, $\p_r n_{\infty} \leq 0$ and the above upper bound on $\lb$,  we estimate~$|\p_r n_{\infty}|$ from above as 
\[
\delta |\p_r n_{\infty}| \leq {R_{b}}^2 r.
\]
This gives 
\[
n_{\infty}(r) \geq n_{\infty}(0) - \frac{{R_{b}}^2}{2\delta} r^2
\]
and, with $\al>0$ such that $\al^2=\frac{\delta }{2{R_{b}}^2} \leq 1 $,
\[
m \geq \int_0^{\al {R_{b}}} r n_{\infty}(r) \diff r \geq \frac{\al^2 {R_{b}}^2}{2} \left(n_{\infty}(0) - \frac{{R_{b}}^2}{2\delta}  \frac{\al^2 {R_{b}}^2}{2}\right) = \frac{\al^2 {R_{b}}^2}{2}\left(n_{\infty}(0) - \frac{{R_{b}}^2}{8} \right) .
\]
As a conclusion of this step, we may assume
\[
n_{\infty}(0) \leq \frac {{R_{b}}^2} {4} ,
\]
otherwise $m \geq \frac{\al^2 {R_{b}}^2}{2} \frac{{R_{b}}^2}{8} = \delta \frac{{R_{b}}^2}{32} $ and the result is proved again.
\\

Thirdly, we prove that with this control from above of $n_{\infty}(0)$, the derivative $|\p_r n_{\infty}|$ is large, thus again there is a control on the mass since $n_{\infty}$ is decreasing. To do so, we use again~\eqref{eq:th2Cintegral} and the third inequality in~\eqref{eq:lu_bound_lambda}. This gives 
\[
\delta r |\p_r n_{\infty}| \geq - n_{\infty}(0)^\gamma \frac{r^2}{2} + \frac{{R_{b}}^2}{2} \frac{r^2}{2}- \frac{r^4}{4}\geq \frac{r^2}{2} 
\left(- \frac {{R_{b}}^2} {4} + \frac{{R_{b}}^2}{2} -\frac{r^2}{2}  \right) = \frac{r^2}{4}  \left(\frac {{R_{b}}^2} {2} -r^2 \right)
\]
where we have used the smallness assumption on $n_{\infty}(0)$ and $\gamma \geq 1$. On the range $r\in (0, \frac {R_{b}} 2)$, we control 
\[
\delta |\p_r n_{\infty}| \geq \frac{r}{4} \frac {{R_{b}}^2} {4}, \quad \text{thus} \quad n_{\infty}(r)  \geq \frac {{R_{b}}^2} {32 \delta}\; \left(\frac{{R_{b}}^2}{4}- r^2\right),
\] 
and thus
\[
 m \geq \frac {{R_{b}}^2} {32 \delta}  \int_0^{{R_{b}}/2} \left(\frac{{R_{b}}^2}{4}- r^2\right) \diff r \geq \frac {{R_{b}}^5} {4 \cdot 128 \,\delta} .
\] 
Again we have the desired control and Theorem~\ref{thm:finalthm2} (C) is proved.

\section{Proof of Theorem \ref{thm:finalthm3}}
\label{sec:proof3}

To study the incompressible limit of stationary states of the Cahn-Hilliard equation, the difficulty comes from the singularity of the pressure.However it is possible to fully characterize them, and calculate the pressure jump at the tumor boundary. We begin with establishing the existence and uniqueness for the solution $n_{inc}$ of the limiting equation. Then, we show that all limits of $n_k$'s are determined by this profile $n_{inc}$.

\subsection{Preliminary steps}

If a sequence $\gamma_k \to \infty$ of stationary states $n_k$ converges to $n_{inc}$ and the sequence of pressures $p_k=n_k^{\gamma_k}$ converges to $p_{inc}$. Then we expect that $p_{inc}(n_{inc}-1)=0$. Therefore, there should be a 'tumor zone' where $n_{inc}=1$ and the pressure vanishes outside. 


This leads us to study the following problem in the zone $(R_0,R)$ where  $p_{inc}=0$:
\begin{equation}\label{eq:critical_problem}
\begin{cases}
- \f{\delta}{r}u_{c}'-\delta u_{c}'' = R^2 - r^2 - \lambda_c \qquad \mbox{in } (R_{0},R),
\\
u_{c}(R) = u_{c}'(R)= 0, \qquad \qquad u_c(R_{0}) = 1,  u_{c}'(R_{0}) = 0, 
\\
\int_{\Omega}r n_{inc}(r) \diff r= m,
\end{cases}
\end{equation}
where $n_{inc}$ is the extension of $u_{c}$ by $1$ on $[0,R_0]$.
\\

In a later subsection, we prove the convergence of the stationary states $n_k$ to this limiting profile. 
\\

Notice that System~\eqref{eq:critical_problem} has three free parameters ($R$, $R_0$, $\lambda_c$) and three constraints (2 additional boundary conditions and mass $m$). The following proposition gives the existence of a solution.
\begin{proposition}[Unique limiting profile]\label{mainthm3}
Let $m>72 \,\delta^{1/2}$. There exists uniquely determined $R>0$, $\lambda_c \in (0,R^2)$ and $R_0 \in (0,R)$ such that Equation~\eqref{eq:critical_problem} has a solution. 
 Furthermore,
 \begin{equation}\label{R0L_expansion}
     R_0 = \sqrt{R^2 - 2\lambda_c} \qquad \text{and} \qquad \lambda_c \approx \sqrt[3]{6} \, R^{2/3} \, \delta^{1/3} \qquad \text{for small }\delta > 0.
 \end{equation}
\end{proposition}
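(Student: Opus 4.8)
The plan is to solve the linear ODE in \eqref{eq:critical_problem} explicitly, reduce the three constraints to two scalar equations for the pair $(R,\lambda_c)$, and observe that these decouple in a convenient way. Writing the left-hand side as the radial Laplacian, $-\f{\delta}{r}(r u_c')' = R^2 - r^2 - \lambda_c$, and integrating once from $R$ using $u_c'(R)=0$ gives
\[
u_c'(r) = \f{(R^2-r^2)(R^2 - 2\lambda_c - r^2)}{4\delta\, r}.
\]
Since $R_0 < R$, the Neumann condition $u_c'(R_0)=0$ forces $R^2 - 2\lambda_c - R_0^2 = 0$, i.e. $R_0^2 = R^2 - 2\lambda_c$ (which requires $\lambda_c < R^2/2$); this is the first identity in \eqref{R0L_expansion}, and it also shows $u_c' < 0$ on $(R_0,R)$, consistent with $u_c$ decreasing from $1$ to $0$.

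Next I would extract the two remaining scalar equations. Integrating $u_c'$ between $R_0$ and $R$ and using $u_c(R)=0$, $u_c(R_0)=1$ yields, after substituting $R_0^2 = R^2-2\lambda_c$, the relation I denote $(\star)$:
\[
\f{R^2(R^2-2\lambda_c)}{2}\,\ln\!\left(1 - \f{2\lambda_c}{R^2}\right) + \lambda_c(R^2-\lambda_c) = 4\delta.
\]
For the mass, rather than integrating the explicit profile I would test the equation against the weight $w(r)=r^2/4$, chosen so that $(r w')' = r$, and integrate by parts twice; the boundary terms collapse thanks to $u_c'(R)=u_c'(R_0)=0$, $u_c(R)=0$, $u_c(R_0)=1$, leaving
\[
m = \f{R_0^2}{2} + \int_{R_0}^R r\, u_c \diff r = -\f{1}{4\delta}\int_{R_0}^R r^3\,(R^2-\lambda_c-r^2)\diff r .
\]
The crucial computation is that the last integral equals $-\lambda_c^3/3$ once $R_0^2 = R^2 - 2\lambda_c$ is used, so that all dependence on $R$ cancels and one obtains the clean formula $m = \lambda_c^3/(12\delta)$, i.e. $\lambda_c = (12\,\delta\,m)^{1/3}$. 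This determines $\lambda_c$ uniquely.

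It then remains to solve $(\star)$ for $R$ with $\lambda_c=(12\delta m)^{1/3}$ fixed. Introducing $t = 2\lambda_c/R^2 \in (0,1)$ turns $(\star)$ into $\lambda_c^2\,\Psi(t) = 4\delta$ with
\[
\Psi(t) = \f{2(1-t)\ln(1-t)}{t^2} + \f{2-t}{t}.
\]
I would show that $\Psi$ is a continuous strictly increasing bijection of $(0,1)$ onto $(0,1)$: the limits $\Psi(0^+)=0$ and $\Psi(1^-)=1$ follow from $s\ln s\to 0$, while
\[
\Psi'(t) = \f{-2(2-t)\ln(1-t) - 4t}{t^3} > 0 ,
\]
because the numerator vanishes at $t=0$ and has derivative $2\big(\ln s + s^{-1} - 1\big)$ with $s=1-t$, which is positive on $(0,1)$ by the elementary inequality $\ln x < x-1$ (taken at $x=1/s$). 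Since $m > 72\,\delta^{1/2}$ guarantees $4\delta/\lambda_c^2 = 4\delta/(12\delta m)^{2/3} < 1$, the equation $\Psi(t)=4\delta/\lambda_c^2$ has a unique root $t\in(0,1)$, hence a unique $R=\sqrt{2\lambda_c/t}$ and then $R_0=\sqrt{R^2-2\lambda_c}$, proving existence and uniqueness.

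Finally, for the asymptotics as $\delta\to 0$ one has $t\to 0$ and $\Psi(t)=\tfrac{t}{3}+O(t^2)$, so $(\star)$ reads $\tfrac{2\lambda_c^3}{3R^2}\approx 4\delta$, i.e. $\lambda_c^3\approx 6\,\delta R^2$; combined with the exact mass relation $\lambda_c^3 = 12\,\delta m$ this gives $R^2\to 2m$ and $\lambda_c \approx \sqrt[3]{6}\,R^{2/3}\delta^{1/3}$, which is \eqref{R0L_expansion}. The main obstacle is the mass computation, namely obtaining the cancellation that produces the clean $\lambda_c^3 = 12\,\delta m$, together with the monotonicity of $\Psi$, which is precisely what makes $R$, and hence the whole limiting profile, unique.
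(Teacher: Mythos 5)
Your proof is correct, and it takes a genuinely different route from the paper at the decisive step. Both arguments share the same starting point: the explicit formula for $u_c'$, the identity $R_0=\sqrt{R^2-2\lambda_c}$ forced by $u_c'(R_0)=0$, and one scalar equation expressing $u_c(R_0)=1$ (your relation $(\star)$ is exactly the paper's \eqref{eq:formula_ucR_0} after eliminating $R_0$; in the variable $x_c=2\lambda_c/R^2$ it is \eqref{eq:nonlinear_for_xc_lemma}, and your $\Psi(t)$ equals $2f(t)/t^2$ with the paper's $f$). The divergence is in how the mass constraint is exploited. The paper (Lemmas \ref{lem:R0_lambda_c} and \ref{lem:mass_formula}) fixes $R$, solves \eqref{eq:nonlinear_for_xc_lemma} for $x_c(R)$, writes $m=R^6x_c^3(R)/(96\delta)$, and then proves by implicit differentiation that $R\mapsto R^6x_c^3(R)$ is strictly increasing; this is the step that consumes the hypothesis $m>72\,\delta^{1/2}$, via the two-sided bounds \eqref{eq:x_c_upper_bound}--\eqref{eq:x_c_lower_bound} needed to ensure $R^4x_c^3>32\delta$. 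You instead observe that the mass formula collapses to a function of $\lambda_c$ alone: since $R^6x_c^3=8\lambda_c^3$, one has $m=\lambda_c^3/(12\delta)$ (your integration-by-parts derivation of this identity is correct and equivalent to the paper's computation). This decouples the system: $\lambda_c=(12\,\delta\, m)^{1/3}$ is pinned down exactly, and uniqueness of $R$ reduces to monotonicity of the single explicit function $\Psi$ on $(0,1)$, which you verify by an elementary two-step differentiation using $\ln x<x-1$. Your route buys three things: it avoids the implicit differentiation and the numerical bounds on $x_c$ entirely; it proves the statement under the weaker hypothesis $m>\tfrac{2}{3}\delta^{1/2}$ (the assumption $m>72\,\delta^{1/2}$ is used only to guarantee $4\delta/\lambda_c^2<1$); and it yields the exact identity $\lambda_c^3=12\,\delta\, m$, hence $R^2\to 2m$ as $\delta\to 0$, which sharpens the asymptotics \eqref{R0L_expansion}. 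What the paper's route produces as a by-product is the quantitative two-sided control of $x_c(R)$; your argument recovers the same expansion from $\Psi(t)=\tfrac{t}{3}+O(t^2)$, together with the (one-line, but worth stating) observation that $t\to 0$ as $\delta\to 0$ because $\Psi$ is increasing and $\Psi(t)=4\delta^{1/3}/(12 m)^{2/3}\to 0$.
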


We postpone the proof of this proposition to the next subsection. Its proof uses an explicit solution obtained by the following problem. Find a couple $(\lambda_u, u)$ such that
\begin{equation}\label{eq:ref_problem}
\begin{cases}
- \f{\delta}{r}u'-\delta u'' = R^2 - r^2 - \lambda_u &\mbox{ in } (0,R)\\
u(R)=u'(R) = 0.
\end{cases}
\end{equation}
\begin{proposition}[Lower bound profile]\label{mainthm2}
Let $\lambda_u \in [0,R^2]$, then the solution $u$ of \eqref{eq:ref_problem} satisfies

\begin{itemize}
    \item[(A)] the explicit formula for $u$  
    $$
    u(r) = \frac{R^2}{4\delta} (R^2 - 2\lambda_u) \, \ln\left(\frac{r}{R}\right) + \frac{(r^2 - R^2)^2}{16\, \delta}  + \frac{R^2-2\lambda_u}{8 \delta} (R^2-r^2), 
    $$
    $$
    u'(r) = \frac{(R^2 - r^2)(R^2 - r^2 - 2\lambda_u)}{4\delta\, r},
    $$
    
\item[(B)] the function $u(r)$ is decreasing and positive for $r$ such that $0<R^2 - r^2 < 2\lambda_u$,
\item[(C)] for a solution $n_\infty$ of \eqref{eq:model} as in Theorem \ref{thm:finalthm2}, if $\lambda_\infty \geq \lambda_u$ then $n_\infty(r) \geq u(r)$ for $r \in (0,R]$. 
\end{itemize}
\end{proposition}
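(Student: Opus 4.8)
The plan is to handle (A) and (B) by explicit integration of the radial ODE and (C) by a one-dimensional comparison principle adapted to the singular weight $r$.

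For (A), I would first write the operator in divergence form, using the identity $\frac{\delta}{r}u' + \delta u'' = \frac{\delta}{r}(ru')'$, so that \eqref{eq:ref_problem} becomes $-\delta\,(ru')' = r\,(R^2 - r^2 - \lambda_u)$. Integrating from $r$ to $R$ and using $u'(R)=0$ gives
\begin{equation*}
r\,u'(r) = \frac{1}{\delta}\int_r^R s\,(R^2 - \lambda_u - s^2)\,\mathrm{d}s = \frac{(R^2-r^2)(R^2-r^2-2\lambda_u)}{4\delta},
\end{equation*}
which is exactly the claimed formula for $u'$ after dividing by $r$. A second integration from $r$ to $R$ with $u(R)=0$ yields $u$; the only nonpolynomial contribution comes from the term $R^2/r$ in the antiderivative of $u'$, which produces the logarithm $\ln(r/R)$, and collecting the remaining terms gives the stated closed form. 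In practice the cleanest route is to differentiate the proposed expression for $u$ and verify directly that it solves the ODE and meets $u(R)=u'(R)=0$.

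Part (B) is then immediate from the factorization of $u'$. On $(0,R)$ the factor $R^2-r^2$ is positive, so the sign of $u'$ equals the sign of $R^2 - r^2 - 2\lambda_u$; on the set $\{0 < R^2 - r^2 < 2\lambda_u\}$, i.e.\ $\sqrt{\max(0,R^2-2\lambda_u)} < r < R$, this second factor is negative, hence $u' < 0$ and $u$ is strictly decreasing there. Positivity follows from the boundary value: since $u(R)=0$ and $u$ decreases as $r \uparrow R$, we obtain $u(r) > u(R)=0$ throughout the interval.

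The core of the proposition is (C), which I would prove by comparison. Writing the equation for $n_\infty$ from \eqref{eq:model} in the same divergence form, $-\delta\,(r n_\infty')' = r\,(R^2 - r^2 - \lambda_\infty - n_\infty^\gamma)$, and subtracting the equation for $u$, the difference $w := n_\infty - u$ satisfies
\begin{equation*}
\delta\,(r\,w')' = r\,\bigl(\lambda_\infty - \lambda_u + n_\infty^\gamma\bigr) \ge 0,
\end{equation*}
where the sign is guaranteed precisely by the hypothesis $\lambda_\infty \ge \lambda_u$ together with $n_\infty \ge 0$ (Theorem~\ref{thm:finalthm2}). Thus $g := r\,w'$ is nondecreasing on $(0,R)$, and since $g(R) = R\,(n_\infty'(R)-u'(R)) = 0$ we get $g \le 0$, hence $w' = g/r \le 0$ on $(0,R)$, so $w$ is nonincreasing; finally $w(R)=n_\infty(R)-u(R)=0$ forces $w(r) \ge 0$ for all $r \in (0,R]$, which is the claim. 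The one point requiring care is the singularity at $r=0$: because $u$ may blow up there through the logarithmic term, one must avoid evaluating anything at the origin, and the argument above does exactly this by working with the monotone quantity $r\,w'$ and concluding only on $(0,R]$. I expect this orientation bookkeeping — getting the direction of the monotonicity right so that the Cauchy data at $r=R$ propagates the inequality inward — to be the only delicate part.
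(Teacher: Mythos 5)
Your proposal is correct and takes essentially the same approach as the paper: (A) by explicit computation (the paper verifies the closed form by differentiation, you additionally derive it by integrating the divergence-form equation $-\delta(ru')' = r(R^2-r^2-\lambda_u)$), (B) from the factorization of $u'$, and (C) via exactly the comparison the paper uses — the quantity $r\,(n_\infty-u)'$ is nondecreasing by the sign hypotheses, vanishes at $r=R$ by the Neumann data, hence is nonpositive, so $n_\infty-u$ is nonincreasing and the Dirichlet condition at $R$ gives $n_\infty \geq u$ on $(0,R]$. No gaps.
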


\begin{proof}[Proof of Proposition \ref{mainthm2}.]
To prove (A), we first compute $u'(r)$ and $u''(r)$:
$$
u'(r) = \frac{R^2}{4\delta} (R^2 - 2\lambda_u) \, \frac{1}{r} + \frac{(r^2-R^2)\,r}{4\delta} - \frac{R^2-2\lambda_u}{4 \delta} r = \frac{(R^2 - r^2)(R^2 - r^2 - 2\lambda_u)}{4\delta\, r},
$$
$$
u''(r) = \frac{-4r(R^2-r^2) + 4\lambda_u r}{4\delta \, r} 
- \frac{(R^2 - r^2)(R^2 - r^2 - 2\lambda_u)}{4\delta\, r^2} = -\frac{1}{\delta} \, (R^2 - r^2 - \lambda_u) - \frac{u'(r)}{r}.
$$
Therefore, we obtain the desired equation~\eqref{eq:ref_problem}.
\\

The statement (B) is an immediate consequence of the formula for $u'(r)$. 
\\

Finally, we prove (C). We introduce $
h(r) = n_\infty(r) - u(r)
$
and we have to prove that $h(r) \geq 0$. From the equations we get
$$
n^{\gamma} - \f{\delta}{r}h'-\delta h'' = \lambda_u - \lambda \mbox{ in } (0,R].
$$
So, thanks to our assumptions and  letting $g'(r) = r h'(r)$, we have
$$
h''(r) + \frac{h'(r)}{r} \geq 0 , \qquad \quad g''(r) \geq 0.
$$
 Integrating this from $r$ to $R$ and using the boundary conditions, we obtain 
$$
g'(r) \leq 0 \implies r \, h'(r) \leq 0 \implies h'(r) \leq 0.
$$
Integrating this once again and using boundary conditions, we discover $h(r) \geq 0$ as desired.

\end{proof}

\subsection{Proof of Proposition \ref{mainthm3}.}

\noindent The explicit solution built in Proposition~\ref{mainthm2} allows us to characterize the parameters $\lambda_c$ and $R_0$. Indeed, we are looking for $\lambda_c$ and $R_0$ such that
\begin{equation}\label{eq:formula_uc'R_0}
u_c'(R_0) = \frac{(R^2 - R_0^2)(R^2 - R_0^2 - 2\lambda_c)}{4\delta\, r} = 0,
\end{equation}
\begin{equation}\label{eq:formula_ucR_0}
u_c(R_0) = \frac{R^2}{4\delta} (R^2 - 2\lambda_c) \, \ln\left(\frac{R_0}{R}\right) + \frac{(R_0^2 - R^2)^2}{16\, \delta}  + \frac{R^2-2\lambda_c}{8 \delta} (R^2-R_0^2) = 1.
\end{equation}
\begin{lemma}[Solving for $R_0$ and $\lambda_c$]\label{lem:R0_lambda_c}
Let $R>0$. Then \eqref{eq:formula_uc'R_0}-\eqref{eq:formula_ucR_0} has a unique solution if and only if $16\delta < R^4$. Moreover, the solution is given by
\begin{equation}\label{eq:formula_R0_lam}
R_0 = \sqrt{R^2 - 2\lambda_c}, \qquad \qquad \lambda_c = \frac{R^2 x_c}{2},
\end{equation}
where $x_c \in (0,1)$ is the unique solution of
\begin{equation}\label{eq:nonlinear_for_xc_lemma}
 (1 - x_c) \, \ln\left(1 - x_c\right) + \frac{1}{2} x_c^2   +  (1-x_c)\, x_c = \frac{8\delta}{R^4}.
\end{equation}
\end{lemma}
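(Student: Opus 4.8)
The plan is to use the two conditions \eqref{eq:formula_uc'R_0}--\eqref{eq:formula_ucR_0} to solve explicitly for $R_0$ and $\lambda_c$, reducing the system to a single scalar equation whose solvability is governed by monotonicity. First I would exploit \eqref{eq:formula_uc'R_0}: since $R_0 \in (0,R)$ forces $R^2 - R_0^2 > 0$, the vanishing of $u_c'(R_0)$ can only come from the second factor, giving immediately $R^2 - R_0^2 - 2\lambda_c = 0$, that is $R_0 = \sqrt{R^2 - 2\lambda_c}$. This is the first formula in \eqref{eq:formula_R0_lam}, and it already requires $\lambda_c < R^2/2$, which is why introducing the dimensionless unknown $x_c = 2\lambda_c/R^2 \in (0,1)$ is natural; then $R_0 = R\sqrt{1-x_c}$.

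Next I would substitute $R_0^2 = R^2(1-x_c)$ and $R^2 - 2\lambda_c = R^2(1-x_c)$ into \eqref{eq:formula_ucR_0}. Using $\ln(R_0/R) = \tfrac12 \ln(1-x_c)$, $(R_0^2 - R^2)^2 = R^4 x_c^2$ and $R^2 - R_0^2 = R^2 x_c$, every term carries a common factor $R^4/\delta$, and after multiplying through by $8\delta/R^4$ the equation collapses exactly to \eqref{eq:nonlinear_for_xc_lemma}, whose left-hand side is $F(x_c)$ with $F(x) = (1-x)\ln(1-x) + \tfrac12 x^2 + (1-x)x$. This is the only computational step and it is routine bookkeeping.

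The crux is then the scalar analysis of $F$ on $(0,1)$. I would first simplify $F(x) = (1-x)\ln(1-x) + x - \tfrac12 x^2$ and compute $F'(x) = -\ln(1-x) - x$, which is strictly positive on $(0,1)$ because $-\ln(1-x) = x + \tfrac{x^2}{2} + \cdots > x$. Hence $F$ is a strictly increasing bijection from $(0,1)$ onto its image; evaluating the endpoints gives $F(0)=0$ and $\lim_{x\to 1^-}F(x) = \tfrac12$ (using $t\ln t \to 0$ as $t \to 0^+$), so the image is exactly the open interval $(0,\tfrac12)$. Consequently \eqref{eq:nonlinear_for_xc_lemma} has a necessarily unique solution $x_c \in (0,1)$ if and only if $8\delta/R^4 \in (0,\tfrac12)$, which is precisely $16\delta < R^4$. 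Unwinding $x_c$ through \eqref{eq:formula_R0_lam} recovers $R_0$ and $\lambda_c$, completing the proof.

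I do not anticipate a genuine obstacle, since the reduction is purely algebraic once \eqref{eq:formula_uc'R_0} is used to eliminate $R_0$. The only place demanding care is confirming the strict monotonicity of $F$ (equivalently the elementary inequality $-\ln(1-x) > x$ on $(0,1)$) together with the endpoint limits that pin down the range to $(0,\tfrac12)$, because it is exactly this range that produces the sharp threshold $16\delta < R^4$ asserted in the statement.
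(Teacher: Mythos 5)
Your proposal is correct and follows essentially the same route as the paper: eliminate $R_0$ via \eqref{eq:formula_uc'R_0} (the paper rules out the factor $R^2-R_0^2=0$ by noting $R_0=R$ is incompatible with \eqref{eq:formula_ucR_0}, while you invoke $R_0<R$ directly), substitute into \eqref{eq:formula_ucR_0} to obtain \eqref{eq:nonlinear_for_xc_lemma}, and show the left-hand side is a strictly increasing bijection from $(0,1)$ onto $(0,\tfrac12)$, yielding the threshold $16\delta<R^4$. The only cosmetic difference is that you prove $F'>0$ via the elementary inequality $-\ln(1-x)>x$, whereas the paper argues through $f'(0)=0$ and $f''>0$; both are equally valid.
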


\begin{proof} We split the reasoning into several steps.\\

\noindent \textit{Step 1: Equation for $R_0$.} Because $R_0 = R$ cannot fit~\eqref{eq:formula_ucR_0},  from \eqref{eq:formula_uc'R_0} we immediately deduce the formmula for $R_0$ in~\eqref{eq:formula_R0_lam}.

\noindent \textit{Step 2: Equation for $\lambda_c$.} We plug the formula for $R_0$  into \eqref{eq:formula_ucR_0} to deduce
$$
\frac{R^2}{4\delta} (R^2 - 2\lambda_c) \, \ln\left(\frac{\sqrt{R^2 - 2\lambda_c}}{R}\right) + \frac{4\lambda_c^2}{16\, \delta}  + \frac{R^2-2\lambda_c}{8 \delta} 2\lambda_c = 1.
$$
Using properties of logarithm and simple algebra, we have
$$
\frac{R^2}{8\delta} (R^2 - 2\lambda_c) \, \ln\left(1 - \frac{2\lambda_c}{R^2}\right) + \frac{\lambda_c^2}{4\, \delta}  + \frac{R^2-2\lambda_c}{4 \delta} \lambda_c = 1.
$$
Introducing the auxiliary variable $x_c = \frac{2\lambda_c}{R^2}$ and after  multiplication by $\frac{8\delta}{R^4}$, this equation is equivalent to Equation~\eqref{eq:nonlinear_for_xc_lemma}.

\noindent \textit{Step 3: Existence and uniqueness of $x_c$ and $\lambda_c$.} We prove that if $16 \, \delta < R^4$, equation \eqref{eq:nonlinear_for_xc_lemma} has a unique solution. To this end, we define
\begin{equation}\label{eq:def_f}
f(x):= (1 - x) \, \ln\left(1 - x\right) - \frac{1}{2} x^2 + x, \qquad \quad f(0)=0, \quad f(1)=\frac 1 2.
\end{equation}
Then, we compute
\begin{equation}\label{eq:deriv_f}
f'(x) =  -\ln(1-x) - x, \qquad f''(x) = \frac{1}{1-x} - 1.
\end{equation}
Since $f'(0) = 0$ and $f''(x) > 0$ for $x \in (0,1)$, it follows that $f'(x) > 0$ so that $f(x)$ is increasing. It follows that $f$ is one-to-one from $(0,1)$ into $(0, \frac{1}{2})$. Therefore, when $16 \delta < R^4$, there exists a unique $x_c \in (0,1)$ such that $f(x_c) = \frac{8 \delta}{R^4}$.
\end{proof}

\begin{lemma}[Estimates for $x_c$]
Let $x_c$ be a solution to \eqref{eq:nonlinear_for_xc_lemma} and $16\, \delta < R^4$. Then, we have
\begin{equation}\label{eq:x_c_taylor}
x_c \approx 2 \sqrt[3]{6} \, \delta^{1/3} R^{-4/3}, \qquad \qquad \lambda_c \approx \sqrt[3]{6} \, \delta^{1/3} R^{2/3} \qquad \qquad (\mbox{ as } \delta \to 0).
\end{equation}
More precisely, we have
\begin{equation}\label{eq:x_c_upper_bound}
x_c \leq 2 \sqrt[3]{6} \, \delta^{1/3} R^{-4/3}.
\end{equation}
Moreover, if $6^4\,8\, \delta < R^4$, we have
\begin{equation}\label{eq:x_c_lower_bound}
x_c \geq 2 \sqrt[3]{5} \, \delta^{1/3} R^{-4/3}.
\end{equation}
\end{lemma}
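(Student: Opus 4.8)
The plan is to reduce the entire lemma to a sharp two-sided estimate of the function $f$ from \eqref{eq:def_f}, whose cancellation structure becomes transparent through its power series. First I would observe that the left-hand side of \eqref{eq:nonlinear_for_xc_lemma} is exactly $f(x_c)$, so the lemma amounts to estimating the unique root of $f(x_c) = 8\delta/R^4$. Expanding $(1-x)\ln(1-x) = -x + \sum_{k \geq 2} \frac{x^k}{k(k-1)}$ and exploiting the cancellation with $-\frac12 x^2 + x$ in \eqref{eq:def_f}, I get
$$
f(x) = \sum_{k=3}^{\infty} \frac{x^k}{k(k-1)}, \qquad x \in [0,1).
$$
Since all coefficients are positive with leading term $\frac{x^3}{6}$, and since $\frac{1}{k(k-1)} \leq \frac16$ for $k \geq 3$, this immediately yields the key two-sided bound
$$
\frac{x^3}{6} \leq f(x) \leq \frac{x^3}{6(1-x)}, \qquad x \in (0,1).
$$

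From the lower bound together with $f(x_c) = 8\delta/R^4$ I obtain $\frac{x_c^3}{6} \leq \frac{8\delta}{R^4}$, that is $x_c^3 \leq 48\delta/R^4$; as $48 = 8 \cdot 6$ and $\sqrt[3]{48} = 2\sqrt[3]{6}$ this is precisely the upper bound \eqref{eq:x_c_upper_bound}. The lower bound \eqref{eq:x_c_lower_bound} is then obtained by a bootstrap. Cubing the upper bound gives $x_c^3 \leq 48\delta/R^4$, and under the hypothesis $6^4 \cdot 8\,\delta < R^4$ (equivalently $48\delta/R^4 < 1/216 = (1/6)^3$) this forces $x_c < \frac16$. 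Once $x_c < \frac16$ is known, the upper bound on $f$ gives $\frac{8\delta}{R^4} = f(x_c) \leq \frac{x_c^3}{6(1-x_c)} < \frac{x_c^3}{5}$, hence $x_c^3 > 40\delta/R^4$ and, using $\sqrt[3]{40} = 2\sqrt[3]{5}$, the claimed lower bound follows.

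Finally, the asymptotics \eqref{eq:x_c_taylor} follow by squeezing: the two-sided bound rewrites as $48\,\delta/R^4 \cdot (1 - x_c) \leq x_c^3 \leq 48\,\delta/R^4$, and since $x_c \to 0$ as $\delta \to 0$ the factor $1-x_c \to 1$, so $x_c \sim \sqrt[3]{48}\,\delta^{1/3}R^{-4/3} = 2\sqrt[3]{6}\,\delta^{1/3}R^{-4/3}$; the corresponding statement for $\lambda_c = R^2 x_c/2$ in \eqref{eq:formula_R0_lam} is then immediate. The one genuinely delicate point is the lower bound: the series estimate alone only controls $f(x_c)$ up to the factor $(1-x_c)$, so one cannot extract a clean constant without first knowing that $x_c$ is small, and it is the bootstrap through the already-established upper bound, quantified by the precise threshold $6^4 \cdot 8\,\delta < R^4$, that makes the constant $2\sqrt[3]{5}$ come out. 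Everything else is elementary manipulation of the convergent series.
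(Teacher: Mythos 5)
Your proof is correct and takes essentially the same route as the paper: both reduce \eqref{eq:nonlinear_for_xc_lemma} to $f(x_c)=8\delta/R^4$ with $f$ as in \eqref{eq:def_f}, obtain the series $f(x)=\sum_{k\geq 3}\frac{x^k}{k(k-1)}$ (the paper via the derivatives $f^{(k)}(x)=\frac{(k-2)!}{(1-x)^{k-1}}$, you by expanding the logarithm directly), deduce the two-sided bound $\frac{x^3}{6}\leq f(x)\leq \frac{x^3}{6(1-x)}$, and run the identical bootstrap through the threshold $6^4\,8\,\delta<R^4$ to get \eqref{eq:x_c_upper_bound} and \eqref{eq:x_c_lower_bound}. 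Your squeeze argument for the asymptotics \eqref{eq:x_c_taylor} is, if anything, slightly more rigorous than the paper's informal use of $f(x)\approx \frac{x^3}{6}$.
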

\begin{proof}
For $\delta$ small, Equation \eqref{eq:nonlinear_for_xc_lemma} shows that $x_c$ is small. More precisely, using \eqref{eq:def_f}, \eqref{eq:deriv_f}, we obtain $f(0) = f'(0) = f''(0) = 0$ and $f^{(3)}(0) = 1$ since $f^{(3)}(x) = \frac{1}{(1-x)^2}$. Hence, by the Taylor expansion, for small $x$, $f(x) \approx \frac{x^3}{6}$. Plugging this approximation into \eqref{eq:nonlinear_for_xc_lemma}, we obtain Estimate~\eqref{eq:x_c_taylor}.
\\

Next, we observe that 
$$
f^{(k)}(x) = \frac{(k-2)!}{(1-x)^{k-1}}, \qquad f^{(k)}(0) = (k-2)!.
$$
In particular, the Taylor expansion around $x=0$ gives
$$
f(x) = \sum_{k \geq 3} \frac{(k-2)!}{k!} x^k = \sum_{k \geq 3} \frac{1}{k\,(k-1)} x^k.
$$
Therefore, $f(x)$ is controlled by
\begin{equation}\label{eq:lower_bound_x_with_1-x}
\frac{x^3}{6} \leq f(x) \leq \frac{x^3}{6} \sum_{k\geq 0} x^k = \frac{x^3}{6\,(1-x)}.
\end{equation}
The control \eqref{eq:x_c_upper_bound} follows from the lower bound. 
\\

Finally, using this, we can find $\delta$ such that $2 \sqrt[3]{6} \, \delta^{1/3} R^{-4/3} \leq \frac{1}{6}$, namely $6^4 8\delta \leq R^4$. Then, we have $x_c \leq \frac{1}{6}$ so that $1-x_c \geq \frac{5}{6}$ and then the estimate \eqref{eq:lower_bound_x_with_1-x} gives us
$$
\frac{8\delta}{R^4} = f(x_c) \leq \frac{x_c^3}{6(1-x_c)} \leq \frac{x_c^3}{5}
$$
so that
$$
\frac{8\delta}{R^4} \leq \frac{x_c^3}{5} \iff \frac{40 \delta}{R^4} \leq x_c^3 \iff 2 \sqrt[3]{5} \, \delta^{1/3} R^{-4/3} \leq x_c.
$$
\end{proof}

\begin{lemma}\label{lem:mass_formula}
Let $u_c$ and $n_{inc}$ be a as in Equation \eqref{eq:critical_problem}, 
then the total mass of $n_{inc}$ satisfies 
$$
\mathcal{M}(n_{inc}) := \int_0^R r \, n_{inc}(r) \diff r = \frac{R^6 \, x_c^3(R)}{96 \, \delta}.
$$
Moreover, the map $R \mapsto \frac{R^6 \, x_c^3(R)}{96 \, \delta}$ is increasing if $R^4 x_c^3(R) > 32\, \delta$.
\end{lemma}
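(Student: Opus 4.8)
The plan is to compute the mass directly from the explicit profile of $u_c$ and then to establish monotonicity by implicit differentiation of the constraint $f(x_c)=8\delta/R^4$ coming from \eqref{eq:nonlinear_for_xc_lemma}.

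Since $n_{inc}\equiv 1$ on $[0,R_0]$ and $n_{inc}=u_c$ on $[R_0,R]$, I first split
$$
\mathcal{M}(n_{inc}) = \frac{R_0^2}{2} + \int_{R_0}^R r\, u_c(r)\diff r .
$$
Integrating the second term by parts and using the boundary data $u_c(R)=0$, $u_c(R_0)=1$ from \eqref{eq:critical_problem}, the boundary contribution is exactly $-\tfrac{R_0^2}{2}$, which cancels the first term and leaves $\mathcal{M}(n_{inc}) = -\tfrac12\int_{R_0}^R r^2\, u_c'(r)\diff r$. Now I insert the explicit derivative $u_c'(r)=\frac{(R^2-r^2)(R^2-r^2-2\lambda_c)}{4\delta\,r}$ from Proposition~\ref{mainthm2}(A) (with $\lambda_u=\lambda_c$), substitute $w=R^2-r^2$, and use $R_0^2=R^2-2\lambda_c$ from Lemma~\ref{lem:R0_lambda_c}; the integral collapses to $\frac{1}{8\delta}\int_0^{2\lambda_c} w\,(w-2\lambda_c)\diff w = -\frac{\lambda_c^3}{6\delta}$. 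Hence $\mathcal{M}(n_{inc})=\frac{\lambda_c^3}{12\delta}$, and inserting $\lambda_c=\frac{R^2 x_c}{2}$ gives the claimed value $\frac{R^6 x_c^3}{96\,\delta}$.

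For the monotonicity statement I set $g(R)=\frac{R^6 x_c^3(R)}{96\delta}$ and differentiate, obtaining $g'(R)=\frac{R^5 x_c^2}{32\delta}\,(2x_c + R\,x_c')$, so that the sign of $g'$ is governed by $2x_c+R\,x_c'$. To find $x_c'(R)$, I differentiate the defining relation $f(x_c(R))=8\delta/R^4$ from \eqref{eq:nonlinear_for_xc_lemma}, which yields $x_c'(R)=-\frac{32\delta}{R^5 f'(x_c)}$; here $f'(x_c)>0$ was already shown in the proof of Lemma~\ref{lem:R0_lambda_c} via \eqref{eq:deriv_f}. Substituting, the inequality $2x_c+R\,x_c'>0$ is equivalent to $x_c\,f'(x_c)\,R^4 > 16\delta$. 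Finally, the elementary bound $f'(x)=\sum_{k\geq 2} x^k/k \geq x^2/2$ gives $x_c\,f'(x_c)\,R^4 \geq \frac{R^4 x_c^3}{2}$, so the hypothesis $R^4 x_c^3 > 32\delta$ forces $x_c\,f'(x_c)\,R^4 > 16\delta$ and hence $g'(R)>0$.

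The routine-but-delicate part is the mass integral: it hinges on the exact cancellation produced by $R_0^2=R^2-2\lambda_c$, and without the substitution $w=R^2-r^2$ the polynomial bookkeeping is error-prone. The monotonicity step is conceptually cleaner, the only mild subtlety being to notice that the crude lower bound $f'(x)\geq x^2/2$ is precisely what reproduces the stated sufficient condition $R^4 x_c^3 > 32\delta$. I note in passing that the full series identity $x\,f'(x)-2f(x)=\sum_{k\geq 3}\frac{k-2}{k(k-1)}x^k>0$ on $(0,1)$ shows the map is in fact increasing unconditionally, so the displayed hypothesis is merely a convenient sufficient condition.
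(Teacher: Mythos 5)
Your proof is correct and follows essentially the same route as the paper: the mass is computed by integrating by parts and substituting $w=R^2-r^2$ (the paper works with $\tau=R^2-r^2$ and $x_c$ rather than $\lambda_c$, an immaterial difference), and the monotonicity follows from implicit differentiation of $f(x_c)=8\delta/R^4$ together with the bound $f'(x)\geq x^2/2$, which is exactly the paper's estimate $\frac{1}{x+\log(1-x)}\geq -\frac{2}{x^2}$ in reciprocal form. Your closing observation is also correct and slightly sharper than the paper's statement: since $16\delta = 2R^4 f(x_c)$, the sufficient condition $x_c f'(x_c) R^4 > 16\delta$ is equivalent to $x_c f'(x_c) > 2f(x_c)$, which the series identity shows holds for all $x_c\in(0,1)$, so the map is increasing whenever $x_c$ is defined.
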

\begin{proof}
Because $n_{inc}$ is a $C^1$ function, integrating by parts, we find
$$
\mathcal{M}(n_{inc})= \int_0^R r\, n_{inc}(r) \diff r = -\frac{1}{2} \int_0^R r^2 n_{inc}'(r) \diff r = -\frac{1}{2} \int_{R_0}^R r^2 u_c'(r) \diff r .
$$
Inserting the formula for $u_c'(r)$ stated in Proposition~\ref{mainthm2}, we deduce that
$$
\mathcal{M}(n_{inc})= -\frac{1}{8 \delta} \int_{R_0}^R r\,(R^2 - r^2)(R^2 - r^2 - 2\lambda_c) \diff r. 
$$
With the notations $\lambda_c := R^2 x_c/2$ and $R_0 = R \sqrt{1-x_c}$, we obtain
$$
\mathcal{M}(n_{inc})= -\frac{1}{8 \delta} \int_{R \sqrt{1-x_c}}^R r\,(R^2 - r^2)(R^2 - r^2 - R^2 x_c) \diff r.
$$
We change variables $\tau = R^2 - r^2$ to get the desired formula
$$
\mathcal{M}(n_{inc})= -\frac{1}{16 \delta} \int_{0}^{R^2 x_c} \tau (\tau - R^2 x_c) \diff \tau = \frac{R^6 x_c^3}{96\, \delta}.
$$
For the second assertion, it is sufficient to prove that the map $R \mapsto R^6 x_c^3(R)$ is strictly increasing. Note that $x_c(R)$ is given implicitly via equation \eqref{eq:nonlinear_for_xc_lemma}. Differentiating it with respect to $R$, we discover that
$$
\frac{\diff x_c}{\diff R} (x_c + \log(1-x_c)) = \frac{32\, \delta}{R^5} \implies \frac{\diff x_c}{\diff R} = \frac{32\, \delta}{R^5\, (x_c + \log(1-x_c))}.
$$
Then, we study the derivative of $ R^6 \, x_c^3(R)$,
$$
\frac{\diff(R^6 x_c^3(R))}{\diff R} = 6 R^5 x_c^3 + 3 R^6 x_c^2  \frac{\diff x_c}{\diff R} =
6 R^5 x_c^3 + \frac{96\, \delta\, R\, x_c^2}{(x_c + \log(1-x_c))}.
$$
Using a simple Taylor estimate, we have $\frac{1}{x + \log (1-x)} \geq \frac{-2}{x^2}$ and we conclude that $R^6 x_c^3(R)$ is increasing since 
$$
\frac{\diff(R^6 x_c^3(R))}{\diff R} \geq 6 R^5 x_c^3 - 192\, \delta\, R = 6 R\, (R^4 x_c^3 - 32 \delta).
$$
\end{proof}

\begin{proof}[Proof of Proposition~\ref{mainthm3}] First, we notice that if $(R_0, R, \lambda)$ satisfy conditions of the Proposition~\ref{mainthm3}, then $R_0$, $\lambda_c$ are given by \eqref{eq:formula_R0_lam} (Lemma \ref{lem:R0_lambda_c}) and $\frac{R^6 x_c^3}{96 \delta} = m$ (Lemma \ref{lem:mass_formula}). Then, by the control $6^2 \, 2\,  \delta^{1/2} \leq m$ as well as an upper bound on $x_c$, cf. \eqref{eq:x_c_upper_bound}, we deduce
$$
6^2 \, 2\,  \delta^{1/2} \leq m = \frac{R^6\, x_c^3}{96\, \delta} \leq \frac{R^2}{2} \implies 6^4 4^2 \delta \leq R^4.
$$ 
This means that we can apply the lower bound \eqref{eq:x_c_lower_bound} to deduce 
$$
R^4 x_c^3 \geq 40 \, \delta^{1/3}.
$$
It follows that the necessary condition for existence of $(R_0, R, \lambda)$ is $6^4 4^2 \delta \leq R^4$ which implies $R^4 x_c^3 \geq 40 \, \delta^{1/3}$. Therefore, by Lemma \ref{lem:mass_formula}, the map $R \mapsto R^6 x_c^3(R)$ is invertible and we can find uniquely determined $R$ such that
$$
m = \frac{R^6 x_c^3(R)}{96 \delta}.
$$
With such a value $R$ (because $16 \,\delta < R^4$), we can find unique $R_0$ and $\lambda_c$ solving \eqref{eq:formula_uc'R_0}-\eqref{eq:formula_ucR_0} so that the formula for the mass is satisfied and the conclusion follows.
\end{proof}
\noindent 

\subsection{Proof of Theorem \ref{thm:finalthm3}}
The solutions of Theorem~\ref{thm:finalthm2} satisfy, with $\lambda_{k}\in (0, R^2)$, $R_k>0$,
\begin{equation*}
\begin{cases}
n^{\gamma_{k}}_{k}-\delta n_{k}''-\frac{\delta}{r}n_{k}'=R_{k}^{2}-r^{2}-\lambda_{k}\quad \text{in $(0,R_{k})$},\quad n_{k}=0 \quad \text{in $(R_{k},R_{b})$},
\\
n_{k}(R_{k})=n_{k}'(R_{k})=0, \qquad \qquad n_{k}'(0)=0.
\end{cases}
\end{equation*}
Thanks to the maximum principle, the sequence $\{n_{k}^{\gamma_{k}}\}_{k}$ is bounded in $L^{\infty}(I_{R_{b}})$. Moreover, multiplying this equation by $n_k''$ and integrating by parts, we obtain 

\begin{equation*}
\int_{0}^{R_{k}} \left(\delta (n_{k}'')^{2}+\gamma n_{k}^{\gamma_{k}-1}(n_{k}')^{2}+\delta\f{(n_{k}')^{2}}{2r^{2}}\right) \diff r =\int_{0}^{R_{k}} n_k''\, (R_{k}^{2}-r^{2}-\lambda_{k}) \diff r.    
\end{equation*}
Since $0\le R_{k},\lambda_{k}\le R_{b}$,  the right-hand side is bounded by $\f{\delta}{2}\int_{0}^{R_{k}}(n_{k}'')^{2}+C(\delta,R_{b})$. 
Thus, $\{n_{k}''\}_{k}$ is uniformly bounded in $L^{2}(0,R_{b})$. Therefore, up to a subsequence, as $k \to \infty$
\begin{equation*}
n^{\gamma_{k}}_{k}\rightharpoonup p_{inc}\geq 0 \quad\text{weakly$^*$ in $L^{\infty}(I_{R_{b}})$},
\qquad \quad 
    n_{k}\to n_{inc} \leq 1 \quad \text{in $C^{1}(\overline{I_{R_b}})$}.
\end{equation*}

\noindent We also have the algebraic relation $p_{inc}(n_{inc}-1)=0$. The inequality $p_{inc}(n_{inc}-1)\le 0$ is  straightforward using $p_{inc}\ge 0$ and $n_{inc}\le 1$. It remains to show that $p_{inc}(n_{inc}-1)\ge 0$. For $\nu>0$, there exists $\gamma_{0}$ such that for $\gamma_k\ge\gamma_{0}$
\begin{equation*}
    n^{\gamma_{k}+1}_{k}\ge n^{\gamma_{k}}_{k}-\nu
\end{equation*}
 because the function $x\mapsto x^{\gamma}(x-1)$ is nonpositive on $[0,1]$ and attains its minimum $-\left(\frac{\gamma}{\gamma+1}\right)^\gamma \frac{1}{\gamma+1} \to 0$ as $\gamma\to \infty$. Then, from the strong convergence of $n_{k}$ and the weak convergence of $n^{\gamma_{k}}_{k}$ we know that $n_{k}^{\gamma_{k}}n_{k}$ converges weakly  to $p_{inc}\,n_{inc}$. Passing to the limit, we obtain 
\begin{equation*}
p_{inc}\,n_{inc}\ge p_{inc}-\nu,
\end{equation*}
for every $\nu>0$. Letting $\nu\to 0$ yields the result.\\ 
 
\noindent Since $\{\lambda_{k}\}_{k}$ and $\{R_{k}\}_{k}$ are also bounded subsequences, we can extract converging subsequences to ${\lambda}$ and  ${R}$ respectively. Thanks to the $C^{1}$ convergence we know that $n$ satisfies the boundary condition $n_{inc}(R)=n_{inc}'(R)=0$ and $n_{inc}$ is radially decreasing as the uniform limit of radially decreasing functions. Finally, we can pass to the limit in the equation of mass conservation and obtain $\int_{0}^{{R}} rn_{inc} \diff r=m$. To sum up, in the limit we obtain a $C^1$, nonincreasing function $n_{inc}$ satisfying
\begin{equation*}
\begin{cases}
p_{inc}- \f{\delta}{r}n_{inc}'-\delta n_{inc}'' = {R}^2 - r^2 - {\lambda} &\mbox{ in } (0,R),\\
n_{inc}(R)=n_{inc}'(R)=n_{inc}'(0)= 0, \\
\int_{0}^{R} r\,n_{inc}(r) \diff r= m,\\
p_{inc}(n_{inc}-1)=0.
\end{cases}
\end{equation*}

 The limiting ODE is satisfied on $(0,{R})$ because the ODE for $n_k$ is satisfied on $(0, \inf_{l \geq k} R_k)$. Passing to the limit, we obtain the ODE on $(0, \lim_{k \to \infty} \inf_{l \geq k} R_k) = (0,{R})$ because ${R}=\lim_{k \to \infty} R_k$.\\

 We claim that $n_{inc}$ reaches the value $1$. By contradiction, if $n_{inc} < 1$ on $[0,R]$, then $p_{inc} = 0$ so that $n_{inc}$ is a $C^1$ solution to the following ODE on $[0,R]$:
$$
- \f{\delta}{r}n_{inc}'-\delta n_{inc}'' = R^2 - r^2 - \lambda, \qquad n_{inc}(R) = n_{inc}'(R) = n_{inc}'(0) = 0. 
$$
By Proposition \ref{mainthm2} (A) such a solution does not exists.
\\

 By monotonicity and the fact that $n_{inc}$ reaches value 1 we deduce that there are two zones. In the zone $\{p_{inc}>0\}$ we have $n_{inc}=1$, and thus $p_{inc}=R^{2}-r^{2}-\lambda$. Then, when $n_{inc}<1$ ($n_{inc}$ is decreasing), let us say at $r=R_{0}$ we have $p=0$. The pressure jump  is equal to $ \llbracket p_{inc}\rrbracket=R^{2}-R_{0}^{2}-\lambda.$\\

\noindent Finally, the convergence of the whole sequence follows from uniqueness of the limiting profile as stated in Proposition~\ref{mainthm3}.

\section{Conclusion and numerical simulations}\label{sec:num}

Motivated by the pressure jump imposed in free boundary problems of tissue growth, \cite{Friedman,Perthame-Hele-Shaw, KT18, LiuXu}, we included surface tension in such compressible models. We established that radially symmetric stationary solutions of the Cahn-Hilliard system with a confining potential $V(r)$ exist and are decreasing. In the incompressible limit, they present a jump of pressure at the boundary of the saturation set $\{n=1\}$. We computed explicitly this pressure jump which is proportional to $\delta^{1/3}V'(R)^{2/3}$. There is a vacuum zone $\{n=0\}$ that induces a degeneracy which is the main difficulty when establishing the a priori estimates.

It is an open question to prove a similar result, for a propagating wave, when the system is driven by  a source term rather than a confining potential, as in~\cite{elbar-perthame-poulain} for instance. However, we provide numerical simulations in radial coordinates. More precisely, we focus on the system

\begin{equation}\label{eq:source_term}
\begin{aligned}
&\f{\p (rn)}{\p t}-\f{\p}{\p r}\left(rn\f{\p\mu}{\p r}\right)=nG(p),\quad \text{in}\quad (0,+\infty)\times I_{R_{b}},\\
&\mu =p-\f{\delta}{r}\f{\p}{\p r}\left(r\f{\p n}{\p r}\right),\qquad p=n^{\gamma}.
\end{aligned}
\end{equation}

When $\gamma\to\infty$, we expect to find the incompressible limit

\begin{equation*}
\begin{cases}
-\f{\p}{\p r}\left(r\f{\p p}{\p r}\right) =G(p), \qquad p(n-1)=0,  \quad\text{in $\{n=1\}$}, \\
\llbracket p \rrbracket=-\delta\llbracket\f{1}{r}\f{\p}{\p r}\left(r\f{\p n}{\p r}\right)\rrbracket\quad\text{on $\partial\{n=1\}$}.
\end{cases}
\end{equation*}

These equations are obtained formally after setting $n=1$ in~\eqref{eq:source_term} and using the relation $p(n-1)=0$. The main open question is to link the value of the pressure jump to the other parameters of the model, \ie the source term $G$, the parameter $\delta$ and boundary's curvature. In radial settings the curvature is $\f{1}{R(t)}$ where $R(t)$ is the radius of the tumor. We present below some numerical simulations for the evolution of the density and the pressure of the tumor. If the pressure jump seems to be decreasing as the tumor grows, it is not numerically clear how to determine the pressure jump. \\

\begin{figure}[H]
    \centering
    \begin{subfigure}[b]{0.45\textwidth}
        \includegraphics[width=\textwidth]{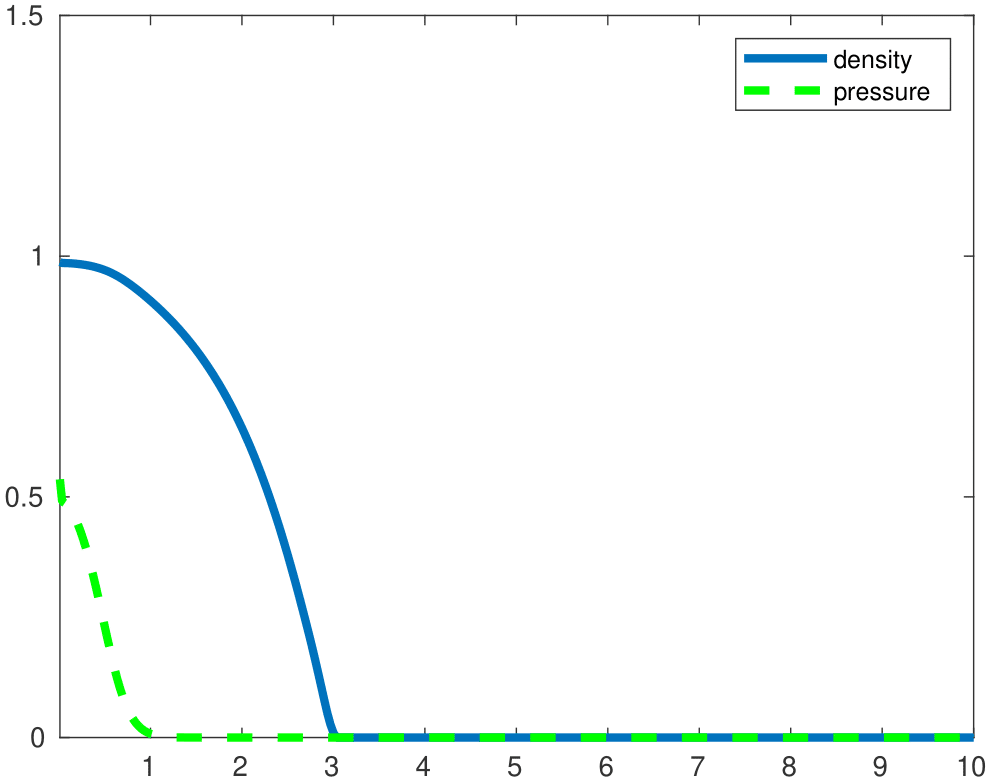}
        \caption{Initial condition}
    \end{subfigure}
    \begin{subfigure}[b]{0.45\textwidth}
        \includegraphics[width=\textwidth]{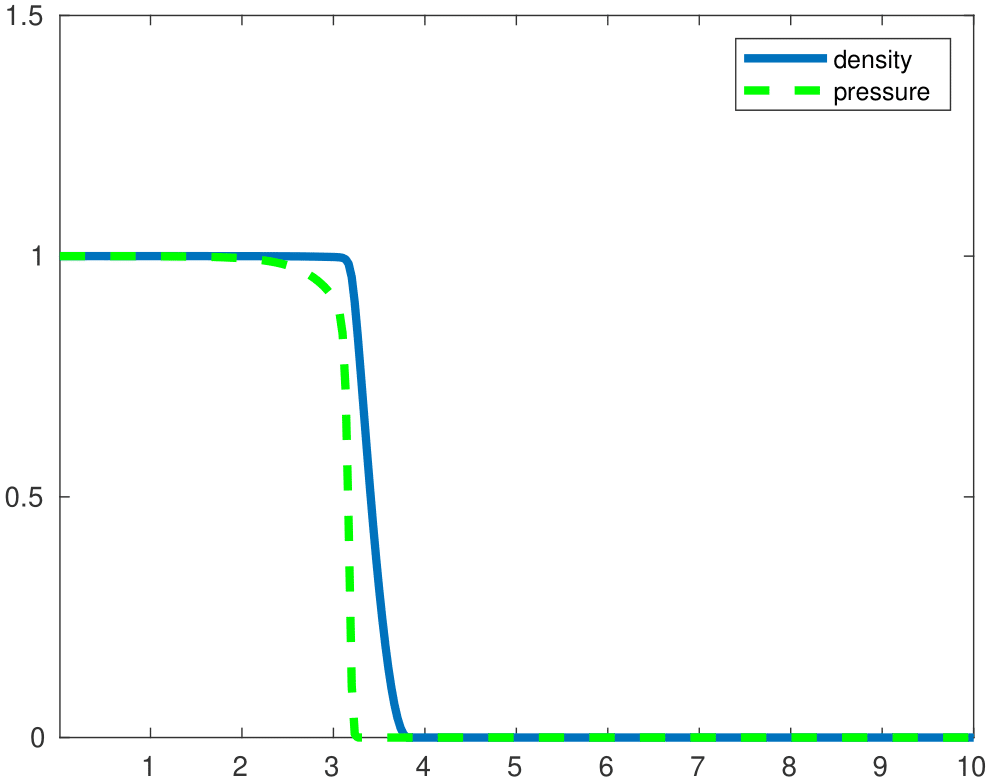}
        \caption{Evolution at $t=0.31$}
    \end{subfigure}
        \begin{subfigure}[b]{0.45\textwidth}
        \includegraphics[width=\textwidth]{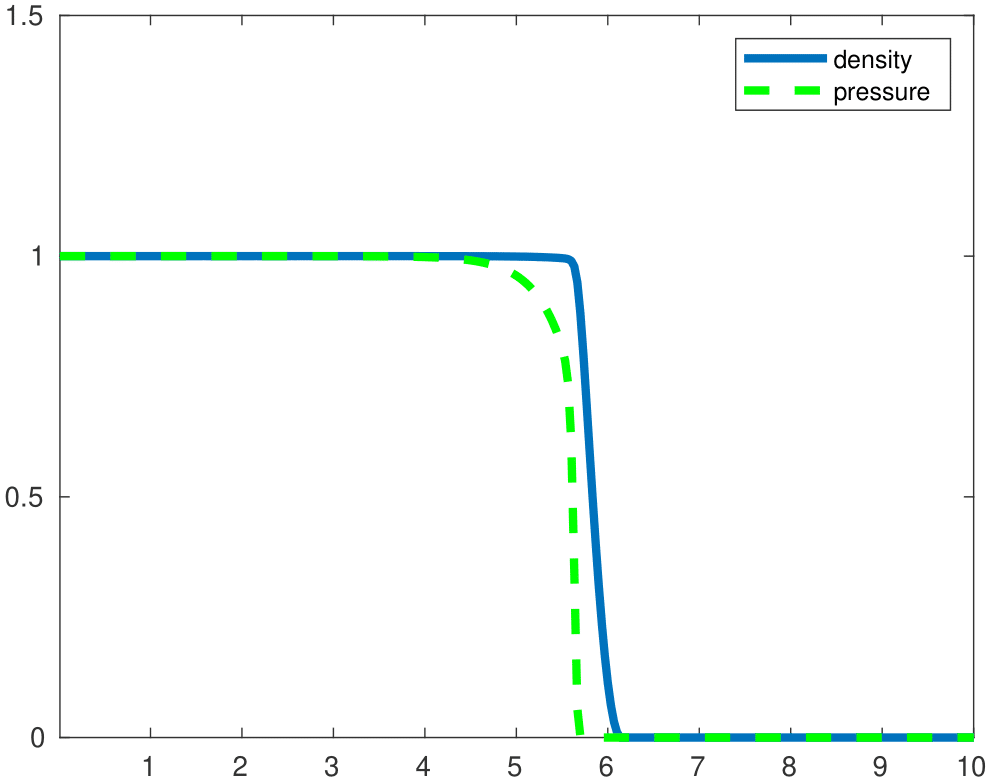}
        \caption{Evolution at $t=1.14$}
    \end{subfigure}
    \begin{subfigure}[b]{0.45\textwidth}
        \includegraphics[width=\textwidth]{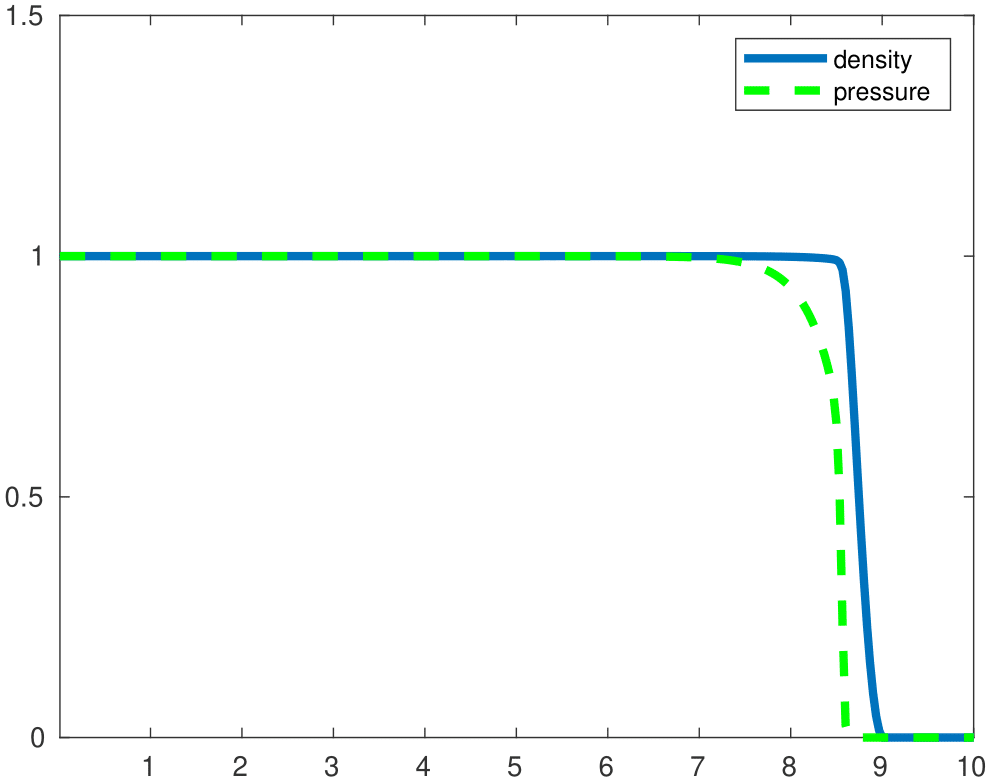}
        \caption{Evolution at $t=2.11$}
    \end{subfigure}
\end{figure}

{\bf Numerical settings.} For the source term, we take $G(p)=10(1-p)$. We use an explicit scheme, with time step $dt=1$e$-7$, final time $t=2.11$ and the interval is $I_{R_{b}}=[0,10]$ with 300 points. The initial condition is a truncated arctangent. To remove the degeneracy $r=0$ in the numerical scheme, we consider $r+\eps$ instead of $r$ for some small $\eps>0$.

\noindent The pressure $p$ reaches the value 1, as the density, because we choose the homeostatic pressure $p_{h}=1$ in the source term $G(p)=10(p_{h}-p)$. The homeostatic pressure is interpreted as the lowest level of pressure that prevents cell multiplication due to contact-inhibition. 

\section*{Acknowledgements}
B.P. has received funding from the European Research Council (ERC) under the European Union's Horizon 2020 research and innovation program (grant agreement No 740623). J.S. was supported by the National Science Center grant 2017/26/M/ST1/00783.

\appendix

\section{Limit profile for general force in dimension 2}
\label{app:generalforce}
\noindent We generalize the pressure jump formula obtained when $V(r) = r^2$ and, for a general strictly increasing force $V$, we establish that
\begin{equation}\label{lambda_gen}
\lambda \approx \frac{\sqrt[3]{12}}{2} \,\delta^{1/3} \, (V'(R))^{2/3}.
\end{equation}
Hence, we consider the solution in $(0,R)$ of
\begin{equation}\label{eq:PDE_elliptic_general_V}
\delta n''(r) + \delta \frac{n'(r)}{r} = = V(R) - V(r) - \lambda, \qquad n'(R) = n(R)=0.
\end{equation}
\\
\noindent \textit{Step 1: An expression of the solution.} The solution is given by
\begin{equation}\label{eq:eq_for_sol_gen_force} 
\begin{cases}
n(r) = \frac{R^2}{2\delta} \, (V(R) - \lambda) \, \log\left(\frac{r}{R}\right)  + \frac{R^2-r^2}{4\delta} (V(R)-\lambda) + \frac{1}{\delta} \, \int_r^R \frac{\mathcal{H}(z)}{z} \diff z,
\\
\mathcal{H}(z) := \int_z^R u\, V(u) \diff u. 
\end{cases}
\end{equation}
Indeed, we immediately verify that  $n(R) = 0$. Moreover, we have
\begin{equation}\label{eq:eq_for_der_gen_force}
n'(r) = \frac{R^2}{2\delta} \, (V(R) - \lambda) \, \frac{1}{r} - \frac{r}{2\delta} (V(R)-\lambda) - \frac{\mathcal{H}(r)}{\delta\,r} .  
\end{equation}
As $\mathcal{H}(R) = 0$, we have $n'(R) = 0$. Finally, we compute $n''$ using \eqref{eq:eq_for_der_gen_force}
\begin{equation}\label{eq:formula_for_n_second_app}
n''(r) = -\frac{R^2}{2\delta} \, (V(R) - \lambda) \, \frac{1}{r^2} -  \frac{1}{2\delta} (V(R)-\lambda) + \frac{1}{\delta} V(r) + \frac{\mathcal{H}(r)}{\delta\,r^2}.
\end{equation}
Therefore, combining \eqref{eq:eq_for_der_gen_force} and \eqref{eq:formula_for_n_second_app}, we obtain  \eqref{eq:PDE_elliptic_general_V}.
\\

\noindent \textit{Step 2: Limit profile.} We are looking for the solution $n$ of  \eqref{eq:PDE_elliptic_general_V} such that $n(R_0) = 1$, $n'(R_0) = 0$ for some $R_0 < R$ and some $\lambda$, i.e. we have two parameters $R_0$ and $\lambda$ to be found.\\

The {\em condition $n'(R_0) = 0$} is immediately obtained from \eqref{eq:eq_for_der_gen_force}. It is given by 
\begin{equation}\label{eq:gen_force_for_lambda}
(R^2 - R_0^2) \, (V(R) - \lambda) - 2\,\mathcal{H}(R_0) = 0.
\end{equation}
For the {\em condition $n(R_0) = 1$}, from \eqref{eq:eq_for_sol_gen_force} we obtain 
$$
\frac{R^2}{2\delta} \, (V(R) - \lambda) \, \log\left(\frac{R_0}{R}\right)  + \frac{R^2-R_0^2}{4\delta} (V(R)-\lambda) + \frac{1}{\delta} \, \int_{R_0}^R \frac{\mathcal{H}(z)}{z} \diff z = 1.
$$
Then, we use \eqref{eq:gen_force_for_lambda} to remove the term $(V(R)-\lambda)$, and multiply by $2 \delta$ to get an equation for $R_0$
$$
{R^2} \, \frac{ \mathcal{H}(R_0)}{R^2 - R_0^2} \, \log\left(\frac{R_0^2}{R^2}\right)  + \mathcal{H}(R_0) + 2 \, \int_{R_0}^R \frac{\mathcal{H}(z)}{z} \diff z = 2\delta.
$$

 We introduce the variable $\tau := \frac{R^2 - R_0^2}{R^2}$ so that the equation reads
\begin{equation}\label{eq:exact_equation_for_tau}
\mathcal{H}(\sqrt{1-\tau} R) \left( \frac{\log(1-\tau)}{\tau}\, + 1\right) + 2 \, \int_{\sqrt{1-\tau} R}^R \frac{\mathcal{H}(z)}{z} \diff z = 2\delta, \qquad \tau := \frac{R^2 - R_0^2}{R^2}.
\end{equation}

\noindent \textit{Step 3: Existence and uniqueness of $\tau$ and $R_0$.} We define the function
$$
\mathcal{F}(\tau):= \mathcal{H}(\sqrt{1-\tau} R) \left( \frac{\log(1-\tau)}{\tau}\, + 1\right) + 2 \, \int_{\sqrt{1-\tau} R}^R \frac{\mathcal{H}(z)}{z} \diff z.
$$
As $\mathcal{H}(R) = 0$ and $\frac{\log(1-\tau)}{\tau}$ is bounded near $\tau = 0$, we have $\mathcal{F}(\tau) = 0$. Now, we want to compute $\mathcal{F}'(\tau)$. First,
\begin{equation}\label{eq:der_H_sqrt}
\frac{\diff}{\diff \tau} \mathcal{H}(\sqrt{1-\tau} R) = \sqrt{1-\tau} R \, V(\sqrt{1-\tau} R) \, \frac{R}{2\sqrt{1-\tau}} = \frac{R^2\,V(\sqrt{1-\tau} R)}{2},
\end{equation}
$$
\frac{\diff}{\diff \tau} \left(2 \, \int_{\sqrt{1-\tau} R}^R \frac{\mathcal{H}(z)}{z} \diff z\right) = 2 \left(- \frac{\mathcal{H}(\sqrt{1-\tau} R)}{\sqrt{1-\tau} R} \right)
\, \frac{R}{2\sqrt{1-\tau}} = \frac{\mathcal{H}(\sqrt{1-\tau} R)}{1-\tau},
$$
$$
\frac{\diff}{\diff \tau}\left( \frac{\log(1-\tau)}{\tau}\, + 1\right) = \frac{1}{\tau\, (\tau - 1)} - \frac{\log(1-\tau)}{\tau^2}.
$$
Therefore, by the product rule, we find
\begin{equation}\label{eq_der_F_general_force}
\begin{split}
&\mathcal{F}'(\tau) =\\
&=\frac{R^2\,V(\sqrt{1-\tau} R)}{2} \left( \frac{\log(1-\tau)}{\tau}\, + 1\right) + \mathcal{H}(\sqrt{1-\tau} R)\left(\frac{1}{1-\tau} + \frac{1}{\tau\, (\tau - 1)} - \frac{\log(1-\tau)}{\tau^2}\right)\\
&= \frac{1}{\tau} \left( \frac{\log(1-\tau)}{\tau}\, + 1\right) \left(\frac{R^2\,\tau\,V(\sqrt{1-\tau} R)}{2} - {\mathcal{H}(\sqrt{1-\tau} R)} \right)
\end{split}
\end{equation}
Since $\frac{\log(1-\tau)}{\tau}\, + 1 < 0$ for $\tau \in (0,1)$, to prove $\mathcal{F}'(\tau)>0$, it is sufficient that for $\tau \in (0,1)$
\begin{equation}\label{eq:aux_fcn_R-H}
\frac{R^2\,\tau\,V(\sqrt{1-\tau} R)}{2} - {\mathcal{H}(\sqrt{1-\tau} R)} < 0.
\end{equation}
This function vanishes in $\tau = 0$. Moreover, its derivative with respect to $\tau$ is equal to
\begin{equation}\label{eq:aux_fcn_R-H_der}
\frac{R^2\,V(\sqrt{1-\tau} R)}{2} + \frac{R^2\,\tau\,V'(\sqrt{1-\tau} R)}{2} \, \frac{-R}{2\, \sqrt{1-\tau}} - \frac{R^2\,V(\sqrt{1-\tau} R)}{2} = -\frac{R^3 \tau V'(\sqrt{1-\tau} R)}{4\sqrt{1-\tau}}
\end{equation}
where we used \eqref{eq:der_H_sqrt}. As $V' > 0$, we conclude \eqref{eq:aux_fcn_R-H} which implies $\mathcal{F'}(\tau) > 0$. Hence, in some neighborhood of $0$ we can find exactly one $\tau$ that solves the equation. Moreover, it is unique as $\mathcal{F}$ is strictly increasing. Then the uniqueness of $R_0$ and $\lambda$ follows.\\

\noindent \textit{Step 4: Taylor expansion of $\tau$.} As $\mathcal{F}$ is strictly increasing, we expect the solution $\tau$ to be small (if $\delta$ is small). This justifies to use Taylor expansion around $\tau = 0$. We already know that $\mathcal{F}(0) = \mathcal{F}'(0) = 0$. Now, we claim $\mathcal{F}''(0) = 0$. Indeed,
$$
\frac{\diff}{\diff \tau}  \left( \frac{\log(1-\tau)}{\tau^2}\, + \frac{1}{\tau}\right) = -\frac{2\log(1-\tau)}{\tau^3} + \frac{\tau-2}{(1-\tau)\,\tau^2} = -\frac{2\log(1-\tau) (1-\tau) + (2-\tau)\, \tau}{(1-\tau)\, \tau^3}
$$
so that using \eqref{eq_der_F_general_force} and \eqref{eq:aux_fcn_R-H_der} we compute
\begin{equation}\label{eq:2nd_der_F_gen_force}
\begin{split}
\mathcal{F}''(\tau) =& -\frac{1}{\tau} \left( \frac{\log(1-\tau)}{\tau}\, + 1\right) \frac{R^3 \tau V'(\sqrt{1-\tau} R)}{4\sqrt{1-\tau}}\\
&-\frac{2\log(1-\tau) (1-\tau) + (2-\tau)\, \tau}{(1-\tau)\, \tau^3} \, \left(\frac{R^2\,\tau\,V(\sqrt{1-\tau} R)}{2} - {\mathcal{H}(\sqrt{1-\tau} R)} \right)
\end{split}
\end{equation}
Since $2\log(1-\tau) (1-\tau) + (2-\tau)\, \tau \approx \tau^3$ for small $\tau$, the expressions above are bounded in the neighborhood of $0$. Evaluating them at $\tau = 0$, we obtain $\mathcal{F}''(0) = 0$.\\

\noindent Now, we claim that $\mathcal{F}^{(3)}(0) \neq 0$. To see this, we write expression \eqref{eq:2nd_der_F_gen_force} in the form
$$
\mathcal{F}''(\tau) = A(\tau) \, B(\tau) + C(\tau) \, D(\tau)
$$
so that
$$
\mathcal{F}^{(3)}(\tau) = A'(\tau) \, B(\tau) + A(\tau) \, B'(\tau) + C'(\tau) \, D(\tau) + C(\tau) \, D'(\tau).
$$
We study the four terms above separately.
\begin{itemize}
    \item $A'(\tau)\, B(\tau)|_{\tau = 0} = 0$. Indeed, $A'(\tau) = -C(\tau)$ and the latter is bounded in the neighbourhood of 0 (see Taylor's expansion above). Moreover, $B(0) = 0$.
    \item $C(\tau) \, D'(\tau)|_{\tau = 0} = 0$. Indeed, $C(\tau)$ is bounded around $\tau = 0$, while $D'(0) = B(0) = 0$.
    \item $C'(\tau)\, D(\tau)|_{\tau = 0} = 0$. In fact, $D(0) = 0$, so it is sufficient to prove that $C'(\tau)$ is bounded near $\tau = 0$. We have
    $$
    C'(\tau) = \frac{\tau\,(2\tau^2 - 9\tau + 6) + 6\log(1-\tau) (1-\tau)^2}{\tau^4\,(1-\tau^2)}.
    $$
    Using expansion $\log(1-\tau) \approx - \tau - \frac{1}{2} \tau^2 - \frac{1}{3}\tau^3$ we have around $\tau = 0$:
    \begin{align*}
    \tau\,(2\tau^2 - &9\tau + 6) + 6\log(1-\tau) (1-\tau)^2 \approx\\
    &\approx
    2\tau^3 - 9\tau^2 + 6\tau - (6 \tau + 3 \tau^2 + 2\tau^3)(1 + \tau^2 - 2\tau)\\
    &= 2\tau^3 - 9\tau^2 + 6\tau - 6 \tau - 3 \tau^2 - 2\tau^3 - 6 \tau^3 - 3 \tau^4 - 2\tau^5 + 12\tau^2 + 6\tau^3 + 4\tau^4)\\
    &= \tau^4 - 2\tau^5.
    \end{align*}

\item $A(\tau) B'(\tau)|_{\tau=0} = \frac{R^3 V'(R)}{8}$. Indeed, $A(0) = \frac{1}{2}$. Moreover, we have
$$
B'(\tau) = \frac{R^3 \, V'(\sqrt{1-\tau} R)}{4\sqrt{1-\tau}} +
\frac{R^3 \tau V''(\sqrt{1-\tau} R)}{4\sqrt{1-\tau}} \frac{(-R)}{2\sqrt{1-\tau}} + \frac{R^3 \tau V''(\sqrt{1-\tau} R)}{8 (1-\tau)^{3/2}} 
$$
which implies $B'(0) = \frac{R^3 V'(R)}{8}$.
\end{itemize}

Therefore, equation \eqref{eq:exact_equation_for_tau} can be approximated with
$$
\frac{1}{6}\,\frac{R^3 V'(R)}{8}\, \tau^3 = 2 \delta \iff \tau^3 = \frac{96\, \delta}{R^3 V'(R)} \iff \tau = \frac{2 \sqrt[3]{12} \,\delta^{1/3}}{R \sqrt[3]{V'(R)}}.
$$
\noindent \textit{Step R: Taylor expansion for $\lambda$.} Small value of $\tau$ means that $R_0$ is close to $R$. Therefore, we expand in Taylor series equation \eqref{eq:gen_force_for_lambda} around $R_0 = R$. Writing
$$
\mathcal{G}(R_0):= (R^2 - R_0^2) \, (V(R) - \lambda) - 2\,\mathcal{H}(R_0) = 0,
$$
we have 
$$
\mathcal{G}'(R_0) = -2 R_0(V(R) -\lambda) + 2\, R_0 \, V(R_0) = 2 R_0\,(V(R_0) - V(R) + \lambda),
$$
$$
\mathcal{G}''(R_0) = 2 \,(V(R_0) - V(R) + \lambda) +  2R_0\,V'(R_0).
$$
It follows that $\mathcal{G}(R) = 0$, $\mathcal{G}'(R) = 2R_0 \lambda$ and $\mathcal{G}''(R) = 2\lambda + 2R V'(R)$. Therefore, \eqref{eq:gen_force_for_lambda} can be approximated with
$$
2R_0 \lambda (R_0-R) + \frac{1}{2} (2\lambda + 2R V'(R)) (R_0-R)^2 = 0,
$$
which can be rewritten as
$$
\lambda = \frac{R\,(R-R_0)\, V'(R)}{R_0 + R}.
$$
Using $\tau = \frac{R^2-R_{0}^2}{R^2}$ we have $R-R_0 = \frac{\tau R^2}{R+R_0}$ so that 
$$
\lambda = \frac{R^3\,\tau \, V'(R)}{(R_0 + R)^2} \approx \frac{R \, \tau \, V'(R)}{4} = \frac{\sqrt[3]{12}}{2} \,\delta^{1/3} \, (V'(R))^{2/3}.
$$

Notice that when $V(r) = r^2$, we have $V'(R) = 2R$. Therefore, $\lambda \approx \sqrt[3]{6} \, \delta^{1/3} R^{2/3}$ as in \eqref{R0L_expansion}.

\bibliographystyle{siam}
\bibliography{biblio}

\begin{thebibliography}{10}

\bibitem{Agosti-CH-2017}
{\sc A.~Agosti, P.~F. Antonietti, P.~Ciarletta, M.~Grasselli, and M.~Verani},
  {\em A {C}ahn-{H}illiard-type equation with application to tumor growth
  dynamics}, Math. Methods Appl. Sci., 40 (2017), pp.~7598--7626.

\bibitem{Alikakos_convergence}
{\sc N.~D. Alikakos, P.~W. Bates, and X.~Chen}, {\em Convergence of the
  {C}ahn-{H}illiard equation to the {H}ele-{S}haw model}, Arch. Rational Mech.
  Anal., 128 (1994), pp.~165--205.

\bibitem{Basan}
{\sc M.~Basan, T.~Risler, J.~Joanny, X.~Sastre‐Garau, and J.~Prost}, {\em
  Homeostatic competition drives tumor growth and metastasis nucleation}, HFSP
  Journal, 3 (2009), pp.~265--272.
\newblock PMID: 20119483.

\bibitem{Bittig_2008}
{\sc T.~Bittig, O.~Wartlick, A.~Kicheva, M.~Gonz{\'{a}}lez-Gait{\'{a}}n, and
  F.~Jülicher}, {\em Dynamics of anisotropic tissue growth}, New J. Phys., 10
  (2008), p.~063001.

\bibitem{BD}
{\sc H.~Byrne and D.~Drasdo}, {\em Individual-based and continuum models of
  growing cell populations: a comparison}, J. Math. Biol., 58 (2009),
  pp.~657--687.

\bibitem{byrne_modelling_2004}
{\sc H.~{Byrne} and L.~{Preziosi}}, {\em Modelling solid tumour growth using
  the theory of mixtures}, Math. Med. Biol., 20 (2003), pp.~341--366.

\bibitem{Cahn-Hilliard-1958}
{\sc J.~W. Cahn and J.~E. Hilliard}, {\em Free energy of a nonuniform system.
  i. interfacial free energy}, The Journal of Chemical Physics, 28 (1958),
  pp.~258--267.

\bibitem{chatelain_2011}
{\sc C.~Chatelain, T.~Balois, P.~Ciarletta, and M.~Ben~Amar}, {\em Emergence of
  microstructural patterns in skin cancer: a phase separation analysis in a
  binary mixture}, New J. Phys., 13 (2011), pp.~339--357.

\bibitem{finite_speed}
{\sc B.~Chen and C.~Liu}, {\em Finite speed of propagation for the
  cahn–hilliard equation with degenerate mobility}, Applicable Analysis, 100
  (2021), pp.~1693--1726.

\bibitem{Chen1996}
{\sc X.~Chen}, {\em {Global asymptotic limit of solutions of the Cahn-Hilliard
  equation}}, Journal of Differential Geometry, 44 (1996), pp.~262 -- 311.

\bibitem{ebenbeck_cahn-hilliard-brinkman_2018}
{\sc M.~Ebenbeck and H.~Garcke}, {\em On a {C}ahn-{H}illiard-{B}rinkman model
  for tumor growth and its singular limits}, SIAM J. Math. Anal., 51 (2019),
  pp.~1868--1912.

\bibitem{Ebenbeck-Brinkman}
{\sc M.~Ebenbeck, H.~Garcke, and R.~N\"{u}rnberg}, {\em
  Cahn-{H}illiard-{B}rinkman systems for tumour growth}, Discrete Contin. Dyn.
  Syst. Ser. S, 14 (2021), pp.~3989--4033.

\bibitem{elbar-perthame-poulain}
{\sc C.~Elbar, B.~Perthame, and A.~Poulain}, {\em Degenerate {C}ahn-{H}illiard
  and incompressible limit of a {K}eller-{S}egel model}, Accepted in Comm.
  Math. Sci.,  (2021).

\bibitem{Garcke-CH-deg}
{\sc C.~M. Elliott and H.~Garcke}, {\em On the {C}ahn-{H}illiard equation with
  degenerate mobility}, SIAM J. Math. Anal., 27 (1996), pp.~404--423.

\bibitem{Escher-solutions}
{\sc J.~Escher and G.~Simonett}, {\em Classical solutions for {H}ele-{S}haw
  models with surface tension}, Adv. Differential Equations, 2 (1997),
  pp.~619--642.

\bibitem{Frieboes-2010-CH}
{\sc H.~B. Frieboes, F.~Jin, Y.-L. Chuang, S.~M. Wise, J.~S. Lowengrub, and
  V.~Cristini}, {\em Three-dimensional multispecies nonlinear tumor
  growth---{II}: {T}umor invasion and angiogenesis}, J. Theor. Biol., 264
  (2010), pp.~1254--1278.

\bibitem{Friedman}
{\sc A.~Friedman}, {\em Mathematical analysis and challenges arising from
  models of tumor growth}, Math. Models Methods Appl. Sci., 17 (2007),
  pp.~1751--1772.

\bibitem{Frigeri-CH-Darcy}
{\sc S.~Frigeri, K.~F. Lam, E.~Rocca, and G.~Schimperna}, {\em On a
  multi-species {C}ahn-{H}illiard-{D}arcy tumor growth model with singular
  potentials}, Commun. Math. Sci., 16 (2018), pp.~821--856.

\bibitem{Garcke-2018-multi-CH-darcy}
{\sc H.~Garcke, K.~F. Lam, R.~N\"{u}rnberg, and E.~Sitka}, {\em A multiphase
  {C}ahn-{H}illiard-{D}arcy model for tumour growth with necrosis}, Math.
  Models Methods Appl. Sci., 28 (2018), pp.~525--577.

\bibitem{Garcke-2016-CH-darcy}
{\sc H.~Garcke, K.~F. Lam, E.~Sitka, and V.~Styles}, {\em A
  {C}ahn-{H}illiard-{D}arcy model for tumour growth with chemotaxis and active
  transport}, Math. Models Methods Appl. Sci., 26 (2016), pp.~1095--1148.

\bibitem{JPDE-7-77}
{\sc Y.~Jingxue}, {\em On the {C}ahn-{H}illiard equation with nonlinear
  principal part}, Journal of Partial Differential Equations, 7 (1994),
  pp.~77--96.

\bibitem{KT18}
{\sc I.~Kim and O.~Turanova}, {\em Uniform convergence for the incompressible
  limit of a tumor growth model}, Ann. Inst. H. Poincar\'e Anal. Non
  Lin\'eaire, 35 (2018), pp.~1321--1354.

\bibitem{LiuXu}
{\sc J.-G. Liu and X.~Xu}, {\em Existence and incompressible limit of a tissue
  growth model with autophagy}, SIAM J. Math. Anal., 53 (2021), pp.~5215--5242.

\bibitem{Lowengrub-bridging}
{\sc J.~Lowengrub, H.~Frieboes, F.~Jin, Y.~Chuang, X.~Li, P.~Macklin, S.~Wise,
  and V.~Cristini}, {\em Nonlinear modelling of cancer: bridging the gap
  between cells and tumours.}, Nonlinearity, 23 1 (2010), pp.~R1--R9.

\bibitem{MR4001523}
{\sc A.~Miranville}, {\em The {C}ahn-{H}illiard equation. Recent advances and
  applications}, vol.~95 of CBMS-NSF Regional Conference Series in Applied
  Mathematics, Society for Industrial and Applied Mathematics (SIAM),
  Philadelphia, PA, 2019.
\newblock Recent advances and applications.

\bibitem{perthame_poulain}
{\sc B.~Perthame and A.~Poulain}, {\em Relaxation of the {C}ahn-{H}illiard
  equation with singular single-well potential and degenerate mobility},
  European J. Appl. Math., 32 (2021), pp.~89--112.

\bibitem{Perthame-Hele-Shaw}
{\sc B.~Perthame, F.~Quir\'{o}s, and J.~L. V\'{a}zquez}, {\em The {H}ele-{S}haw
  asymptotics for mechanical models of tumor growth}, Arch. Ration. Mech.
  Anal., 212 (2014), pp.~93--127.

\bibitem{Perthame-incompressible-visco}
{\sc B.~Perthame and N.~Vauchelet}, {\em Incompressible limit of a mechanical
  model of tumour growth with viscosity}, Philos. Trans. Roy. Soc. A, 373
  (2015), pp.~20140283, 16.

\bibitem{sciume}
{\sc G.~Scium{\'e}}, {\em Mechanistic modeling of vascular tumor growth: an
  extension of biot's theory to hierarchical bi-compartment porous medium
  systems}, Acta Mech., 232 (2021), pp.~1445--1478.

\bibitem{Yin2001}
{\sc J.~Yin and C.~Liu}, {\em Radial symmetric solutions of the
  {C}ahn-{H}illiard equation with degenerate mobility.}, Electronic Journal of
  Qualitative Theory of Differential Equations [electronic only], 2001 (2001),
  pp.~Paper No. 2, 14 p., electronic only--Paper No. 2, 14 p., electronic only.

\end{thebibliography}

\end{document}